\renewcommand{\xi}{x_i}
\newcommand{\xp}{x_{i+1}}
\newcommand{\xqq}{x_{i+2}}
\newcommand{\xm}{x_{i-1}}
\newcommand{\xj}{x_j}
\newcommand{\TenR}{\otimes}
\newcommand{\oTen}[1]{\otimes_{#1}}
\newcommand{\Teni}{\otimes_i}
\newcommand{\ot}{\otimes}
\newcommand{\teni}{\otimes}
\newcommand{\tenj}{\otimes}
\newcommand{\tenk}{\otimes}
\newcommand{\tenp}{\otimes}
\newcommand{\tenadj}{\otimes}
\newcommand{\tenfar}{\otimes}
\newcommand{\ii}{\underline{\textbf{\textit{i}}}}
\newcommand{\jj}{\underline{\textbf{\textit{j}}}}
\newcommand{\kk}{\underline{\textbf{\textit{k}}}}
\newcommand{\lra}{\longrightarrow}
\newcommand{\Hom}{{\rm Hom}}
\renewcommand{\to}{\rightarrow}
\newcommand{\maps}{\colon}
\newcommand{\define}{\stackrel{\mbox{\scriptsize{def}}}{=}}
\newcommand{\grdrk}{{\rm grk}}     
\newcommand{\Ztt}{\ensuremath{\mathbbm{Z}\left[t,t^{-1}\right]}}
\newcommand{\Zvv}{\ensuremath{\mathbbm{Z}\left[v,v^{-1}\right]}}
\newcommand{\RmolfR}{\ensuremath{R\mathrm{-molf-}R}}
\newcommand{\RmolfZR}{\ensuremath{R\mathrm{-molf}_{\mathbb{Z}}\mathrm{-}R}}
\newcommand{\mc}[1]{\mathcal{#1}}
\newcommand{\ig}[2]{\vcenter{\xy (0,0)*{\includegraphics[scale=#1]{#2}} \endxy}}
\newcommand{\igv}[2]{\vcenter{\xy (0,0)*{\reflectbox{\includegraphics[scale=#1, angle=180]{#2}}} \endxy}}
\newcommand{\igh}[2]{\vcenter{\xy (0,0)*{\reflectbox{\includegraphics[scale=#1]{#2}}} \endxy}}
\newcommand{\ighv}[2]{\vcenter{\xy (0,0)*{\includegraphics[scale=#1, angle=180]{#2}} \endxy}}
\newcommand{\igrotCW}[2]{\vcenter{\xy (0,0)*{\includegraphics[scale=#1, angle=90]{#2}} \endxy}}
\newcommand{\igc}[2]{\begin{center} \includegraphics[scale=#1]{#2} \end{center}}
\newcommand{\auptob}[2]{\xy  (0,-5)*+{#1}="1"; (0,5)*+{#2}="2"; {\ar@{|->} "1";"2"};\endxy}
\newtheorem*{prop*}{Proposition}
\newtheorem{prop}{Proposition} [section]
\newtheorem{lemma}[prop]{Lemma}
\newtheorem{thm}{Theorem}
\newtheorem{cor}[prop]{Corollary}
\newtheorem{claim}[prop]{Claim}
\theoremstyle{definition}
\newtheorem{defn}[prop]{Definition}
\newtheorem{notation}[prop]{Notation}
\newtheorem{example}[prop]{Example}
\theoremstyle{remark}
\newtheorem{remark}[prop]{Remark}
\numberwithin{equation}{section}
\let\hat=\widehat
\let\phi=\varphi
\let\epsilon=\varepsilon
\def\C{{\mathbbm C}}
\def\N{{\mathbbm N}}
\def\R{{\mathbbm R}}
\def\Z{{\mathbbm Z}}
\def\1{\mathbbm{1}}%
\title{Diagrammatics for Soergel categories}
\author{Ben Elias and Mikhail Khovanov}
\date{May 1, 2010}
\begin{document} 

\maketitle
\baselineskip 14pt
 
\vspace{0.1in} 

\setcounter{tocdepth}{1}
\tableofcontents
      
\vspace{0.1in} 

\begin{abstract} The monoidal category of Soergel bimodules can be thought of as a 
categorification of the Hecke algebra of a finite Weyl group. We present this
category, when the Weyl group is the symmetric group, in the language of 
planar diagrams with local generators and local defining relations.
\end{abstract}

\newpage 

\section{Introduction} 

In the paper~\cite{Soe1} Soergel gave a combinatorial description of a certain category of Harish-Chandra
bimodules over a simple Lie algebra $\mathfrak{g}$. This category was and continues to be of primary interest
in the infinite-dimensional representation theory of simple Lie algebras. Soergel discovered a functor from
this category to a full subcategory of bimodules over a certain ring $R$, the objects of which are now commonly
called \emph{Soergel bimodules}. The category of Soergel bimodules is additive and monoidal, unlike the
original category which is abelian, but it still has sufficient information to describe the original category. Soergel constructed an
isomorphism between the Grothendieck ring of his category and the integral form of the Hecke algebra of the
Weyl group $\mc{W}$ of $\mathfrak{g}$. Hence, Soergel's construction provides a categorification of the Hecke
algebra.

Given a $k$-dimensional $\C$-vector space $V$ and a generic $q\in \C$, there are commuting actions
of the quantum group $U_q(\mathfrak{sl}_k)$ and the Hecke algebra $\mc{H}$ of the symmetric group
$S_n$ on $V^{\otimes n}$. These actions turn the quotient $U_q(\mathfrak{sl}_k)/J_1$ of the quantum
group and $\mc{H}/J_2$ of the Hecke algebra by the kernels of these action into a dual pair. A
categorical realization of the triple $(V^{\otimes n}, U_q(\mathfrak{sl}_k)/J_1, \mc{H}/J_2)$ was
given by Grojnowski and Lusztig~\cite{GL} via categories of perverse sheaves on products of flag and
partial flag varieties, also see~\cite{BLM, Gr, BFK, FKS}.
  
Many foundational ideas about categorification were put forward by Igor Frenkel in the early 90's (a
small fraction of these ideas formed a part of the paper~\cite{CF}). In particular, Frenkel
conjectured~\cite{Fr} that quantum groups and not just their finite-dimensional quotients
$U_q(\mathfrak{sl}_k)/J_1$ can be categorified. These conjectures remained open until recently, when
categorifications of quantum $\mathfrak{sl}_2$ and $\mathfrak{sl}_k$ were discovered in~\cite{AL2}
and~\cite{KL3}, with a related but different approach developed in~\cite{CR,Rou3}.  In the
categorifications~\cite{AL2, KL3} of quantum groups, 2-morphisms are given by linear combinations of
planar diagrams, modulo local relations.

The parallel objective would be to categorify the Hecke algebra in the same spirit, using planar diagrams.
Soergel had already provided a categorification, so it remains to ask whether his category can be rephrased
diagrammatically. Diagrammatics should also provide a presentation of the category by generators and relations.
A similar question was recently posed by Libedinsky~\cite{Lib3}, who essentially produced such a description
for categorifications of Hecke algebras associated to ``right-angled'' Coxeter systems.

Here we answer this question positively in the case of the Hecke algebra associated to the symmetric group.
This is, of course, the Hecke algebra that appears in the Schur-Weyl duality for $V^{\otimes n}$. For
notational convenience, we use $n+1$, not $n$, as our parameter and define a diagrammatical version $\mc{DC}$
of the category of Soergel bimodules that categorifies the Hecke algebra of the symmetric group $S_{n+1}$.

In some sense, diagrammatic categorifications are very ``low-tech,'' in that they can be described easily and do not rely on
heavy machinery. While one can prove that Soergel bimodules categorify the Hecke algebra using only elaborate commutative
algebra (see \cite{Soe4}, although it is never stated explicitly), showing that indecomposable bimodules descend to the
Kazhdan-Lusztig basis of the Hecke algebra utilizes Kazhdan-Lusztig theory~\cite{KaLu1, KaLu2}. This, in turn, is related to
fundamental developments in geometric representation theory like D-modules on flag manifolds~\cite{BeBe, BK} and perverse
sheaves~\cite{BBD}. One hopes that a diagrammatic approach will help to visualize and work with these sophisticated
constructions, in the same way that the categorifications of quantum groups \cite{AL2,KL3} have led to an improved
understanding of perverse sheaves on quiver varieties (see \cite{VV}). One also hopes that this approach can shed light on
categorifications of representations of the Hecke algebra coming from the context of category $\mc{O}$ (see \cite{MS}).

We start with an intermediate category $\mc{DC}_1$ whose objects are finite sequences $\ii=i_1\dots
i_d$ of numbers between $1$ and $n$.  An object is represented graphically by marking $d$ points in
the standard position (say, having coordinates $1, \dots, d$) on the $x$-axis and assigning labels
$i_1, \dots, i_d$ to marked points from left to right. Morphisms between $\ii$ and $\jj$ are given
by linear combinations (with coefficients in a ground field $\Bbbk$) of planar diagrams embedded in
the strip $\R\times [0,1]$.  These diagrams are decorated planar graphs, where edges may extend to
the boundary $\R\times \{0,1\}$.  Each edge carries a label between $1$ and $n$, so that the induced
labellings of the lower and upper boundaries are $\ii$ and $\jj$, respectively. In the interior of
the strip, we allow
\begin{itemize}
\item vertices of valence 1, 
\item vertices of valence 3 with all 3 edges carrying the same label, 
\item vertices of valence 4 seen as intersections of $i$ and $j$-labelled lines with $|i-j|>1$, 
\item vertices of valence 6 with the edge labelling $i,i+1,i,i+1,i,i+1$, reading clockwise 
around the vertex. 
\item boxes labelled by numbers between $1$ and $n+1$ which float in 
the regions of the graph.
\end{itemize} 
We impose a set of local relations on linear combinations of these diagrams, including 
invariance of diagrams under all isotopies.  A subset of the relations says that 
$i$ is a Frobenius object in the category $\mc{DC}_1$.

The space of morphisms in $\mc{DC}_1$ between $\ii$ and $\jj$ is naturally a graded vector
space. Allowing grading shifts and direct sums of objects, then restricting to grading-preserving
morphisms, and finally passing to the Karoubian closure of the category results in a graded
$\Bbbk$-linear additive monoidal category $\mc{DC}$. Our main result (Theorem \ref{mainthm} in
Section~\ref{sec-conseq}) is an explicit equivalence between this category and the category
$\mc{SC}$ of Soergel bimodules.

The category $\mc{DC}$ is monoidal, and can be viewed as a 2-category with a single object. It may be easier to
tackle the diagrammatics after reading an introductory reference on diagrammatics for 2-categories. Such an
introduction can be found in \cite{AL2}. This may make it easier to explore similarities with the
categorifications of quantum groups in~\cite{KL3, AL2}, where regions of diagrams are labelled by integers
in~\cite{AL2} and integral weights of $\mathfrak{sl}_n$ in~\cite{KL3}. Boxes floating in regions are
superficially analogous to floating bubbles of~\cite{KL3, AL2}. Unlike the diagrammatic categorifications
in~\cite{KL3, AL2}, our lines don't carry dots and are not oriented.

There is another way to view our diagrammatics, which is not developed in this paper. Rouquier~\cite{Rou1,
Rou2} defined an action of the Coxeter braid group associated to $\mc{W}$ on the category of complexes of
Soergel bimodules up to homotopies, which is related to a braid group action using Harish-Chandra bimodules
that had been known for some time. These complexes were later used in an alternative
construction~\cite{Ktriply} of a triply-graded link homology theory~\cite{KR} categorifying the HOMFLY-PT
polynomial~\cite{DGR, KR, JR, W}. In this approach, a product Soergel bimodule $B_{i_1}\otimes \dots \otimes
B_{i_d}$ is depicted by a planar diagram given by concatening elementary planar diagrams lying in the
$xy$-plane that consist of $n+1$ strands going up, with $i$ and $i+1$-st strands merging and splitting,
see~\cite[Figures 2,3]{Ktriply}. Morphisms between product bimodules can be realized by linear combinations of
foams -- decorated two-dimensional CW-complexes embedded in $\R^3$ with suitable boundary conditions. Foams
have been implicit throughout papers on triply-graded link homology (see~\cite{MSV2} for instance, where
various arrows between planar diagrams can be implemented by foams), and explicitly appear in the papers on
their doubly-graded cousins, see~\cite{MSV1, MN} and references therein.

Foams are 3-dimensional objects -- they are two-dimensional CW-complexes embedded in $\R^3$ that
produce cobordisms along the $z$-axis direction between planar objects corresponding to product
Soergel bimodules.  The planar diagrams of our paper are two-dimensional encodings of these foams,
essentially projections of the foams onto the $yz$-plane along the $x$-axis.

It was shown in~\cite{KhT} that the action of the braid group on the homotopy category of Soergel
bimodules extends to a (projective) action of the category of braid cobordisms.  Thus, the homotopy
category of $\mc{DC}$ produces invariants of braid cobordisms, so that our planar diagrammatics
carry information about four-dimensional objects. This informational density indicates the efficiency 
of such encodings.

{\bf Addendum} Since this paper first appeared, the diagrammatics developed here have led to several
developments which we briefly mention here. In \cite{EKr} it is shown that Rouquier's braid group action lifts
functorially to the braid cobordism category. In \cite{Vaz,MV}, a functor is given from the category
$\mc{DC}$ to categories of foams used in link homology. Together, these papers show that the encodings
mentioned above are more than simply heuristic. In \cite{E}, the Temperley-Lieb algebra is categorified as a
quotient of $\mc{DC}$. Additional statements relating this paper to either newer papers or to previous versions
of this paper are found sparsely under a similar ``Addendum" heading.

{\bf Acknowledgments.} The authors were supported by the NSF grants DMS-0706924 
and DMS-0739392. They are grateful to Geordie Williamson and Catharina Stroppel for remarks and suggestions
on an earlier version of this paper. 

\section{Preliminaries}
\label{sec-preliminaries}

Henceforth we will fix a positive integer $n$. Indices $i$, $j$, and $k$ will range over $1, \ldots, n$ if not
otherwise specified. Finite ordered sequences of such indices (allowing repetition) will be denoted
$\ii=i_1\ldots i_d$, as well as $\jj$ and $\kk$. The length of the sequence will be denoted $d=d(\ii)$. For
sequences of length $d=1$ where the single entry is $i$, we use $i$ and $\ii$ interchangeably. Occassionally
$i+1$ will also be used as an index, and whenever this occurs we make the tacit assumption that $i \le n-1$ so
that all indices used remain between 1 and $n$. The same goes for $i-1$, $i+2$ and the like. We denote the
length 0 sequence by the empty set symbol $\emptyset$.

We work over a field $\Bbbk$, usually assuming that Char $\Bbbk \ne 2$, and sometimes 
specializing it to $\C$.

Given a noetherian ring $R$, the category $\RmolfR$ is the full subcategory of $R$-bimodules
consisting of objects which are finitely generated as left $R$-modules. If $R$ is graded, the
category $\RmolfZR$ is the analogous subcategory of graded $R$-bimodules and grading-preserving 
homomorphisms. 

%
\subsection{Hecke algebra}
\label{subsec-hecke}
%

Let $(\mc{W},\mc{S})$ be a Coxeter system of a finite Weyl group $\mc{W}$, with length function $l
\colon \mc{W} \to \N=\{0,1,2,\dots \}$, and $e \in \mc{W}$ the identity.  The Hecke algebra $\mc{H}$
is an algebra over $\Zvv$ (we follow Soergel's use~\cite{Soe5} of the variable $v$; related
variables are denoted in the literature by $t=v^{-1}$ and $q=t^2$), which is free as a module with
basis $T_w,\ w\in\mc{W}$. Multiplication in this basis is given by $T_vT_w=T_{vw}$ when
$l(v)+l(w)=l(vw)$, and $T_s^2=(v^{-2}-1)T_s+v^{-2}T_e$ for $s \in \mc{S}$. $T_e$ is the identity element
in $\mc{H}$ and will often be written as $1$.

In the case we are interested in presently, $\mc{W}=S_{n+1}$, and $\mc{S}$ consists of the
transpositions $s_i=(i,i+1)$ for $i=1, \ldots, n$. The element $T_{s_i}$ will be denoted $T_i$. The
Hecke algebra has a presentation over $\Zvv$, being generated by $T_i$ subject to the relations
\begin{eqnarray*}
T_i^2 & = & (v^{-2}-1)T_i + v^{-2} \\
T_iT_j & = & T_jT_i \ \mathrm{for}\  |i-j|\ge 2 \\
T_iT_{i+1}T_i & = & T_{i+1}T_iT_{i+1}.
\end{eqnarray*}
Clearly then, $\mc{H}$ is also generated as an algebra by $b_i\define
C'_{s_i}=v(T_i+1)$, $1\le i \le n$, and the relations above transform into
\begin{eqnarray}
b_i^2 & = & (v+v^{-1})b_i \label{eqn-bisq}\\
b_ib_j & = & b_jb_i \ \mathrm{for}\  |i-j|\ge 2 \label{eqn-bibj}\\
b_ib_{i+1}b_i + b_{i+1} & = & b_{i+1}b_ib_{i+1} + b_i \label{eqn-bibpbi}.
\end{eqnarray}
We often write the monomial $b_{i_1}b_{i_2}\cdots b_{i_d}$ as $b_{\ii}$ where
$\ii=i_1\ldots i_d$. Notice that $b_{\emptyset}=1$.

Let $a\longmapsto \overline{a}$ be the involution of $\Zvv$ determined by 
$\overline{v}=v^{-1}$. It extends to an involution of $\mc{H}$ given by 
\begin{equation*} \overline{\sum a_w T_w} = \sum \overline{a_w} T_{w^{-1}}^{-1}
\end{equation*} 
In particular, $\overline{T_i} = T_i^{-1}= v^2T_i + v^2 -1.$ 

Kazhdan and Lusztig~\cite{KaLu1} defined a pair of bases $\{C_w\}_{w\in \mc{W}}$ and $\{C'_w\}_{w\in \mc{W}}$
for $\mc{H}$, which immediately proved to be of fundamental importance for representation theory and
combinatorics. The two bases are related via a suitable involution of $\mc{H}$, and the elements of the second
Kazhdan-Lusztig basis $\{C'_w\}_w$ are determined by the two properties: \begin{eqnarray*} \overline{C'_w} & =
& C'_w, \\ C'_w & = & v^{l(w)} \sum_{y \le w} P_{y,w} T_y, \end{eqnarray*} where $P_{y,w}\in \Z[v^{-2}]$ has
negative $v$-degree strictly less than $l(w)-l(y)$ for $y<w$ and $P_{w,w}=1$. There is no simple formula
expressing $C'_w$ in terms of $T_y$, but observe that $C'_e=1$ and $C'_{s_i}=b_i=v(T_i+1)$. For a good
introduction to the Kazhdan-Lusztig basis, see \cite{Soe5}.

Let $\epsilon: \mc{H}\lra \Zvv$ be the $\Zvv$-linear map given by $\epsilon(T_e)=1$ and $\epsilon(T_w)=0$ if $w
\ne e$. Thus, $\epsilon$ simply picks up the coefficient of $T_e$ in $x$. The easily checked property
$\epsilon(T_i x) = \epsilon (x T_i)$ for any $ x\in \mc{H}$ implies that $\epsilon(xy) = \epsilon(y x)$,
$\forall x,y\in \mc{H}$, so that $\epsilon$ is a trace map and turns $\mc{H}$ into a symmetric
Frobenius $\Zvv$-algebra.

Denote by $\omega$ a $v$-antilinear antiinvolution $\omega \colon \mc{H} \to \mc{H}$ defined uniquely by
$\omega(b_i)=b_i$. The antiinvolution and $v$-antilinearity conditions say that $\omega(xy)=\omega(y)\omega(x)$
and $\omega(ax) = \overline{a} \omega(x)$, for $x,y\in \mc{H}$ and $a\in \Zvv$.

Consider the pairing $(,) \colon \mc{H} \times \mc{H} \to \Zvv$ of $\Z-$modules
given by 
$$ (x,y) =\epsilon(\omega(x)y) .$$ 

It satisfies the following properties:
\begin{enumerate}
\item The pairing is \emph{semi-linear}, i.e.  $(ax,y)=\overline{a}(x,y)$ while 
$(x,ay)=a(x,y)$, for $a\in \Zvv$.
\item $b_i$ is self-adjoint, i.e. $(x,b_iy)=(b_ix,y)$ and $(x,yb_i)=(xb_i,y)$.
\item If $\ii=i_1 \dots i_d$ with $i_1<i_2<\cdots<i_d$ then $(1,b_{\ii})=v^d$. Such a monomial
  $b_{\ii}$ is called an \emph{increasing monomial}, and $\ii$ an \emph{increasing sequence}.  When
  $d=0$, the sequence $\ii$ is empty and $(1,1)=1$.
\end{enumerate}

\begin{remark} \label{unicity} It is not difficult to observe that $(,)$ is the unique form
  satisfying these three properties.  This is because the Hecke algebra has a spanning set over
  $\Ztt$ consisting of monomials $b_{\ii}$, and every monomial may be reduced, by cycling the last
  $b_i$ to the beginning and by applying the Hecke algebra relations, to an increasing
  monomial. This is a simple combinatorial argument that we leave to the reader.
\end{remark}

%
\subsection{Soergel bimodules}
\label{subsec-sbim}
%

In~\cite{Soe1} Soergel introduced a category of bimodules which categorified the Hecke algebra,
and later generalized his construction to any Coxeter group $\mc{W}$~\cite{Soe4}.  
Within the category $\RmolfZR$, for  $R$ a certain graded $\Bbbk$-algebra ($\Bbbk$ an infinite field 
of characteristic $\ne 2$), 
he identified indecomposable modules $B_w$ for $w \in \mc{W}$, such that the only indecomposable
summands of tensor products of $B_w$'s are $B_{w'}$ for $w'\in \mc{W}$.  Thus, the subcategory of
$\RmolfZR$ generated additively by the $B_w$ has a tensor product, and its Grothendieck ring is
isomorphic to $\mc{H}$, under the isomorphism sending $C'_s$ to $[B_s]$. Moreover, every 
$B_w$ shows up as a summand of some tensor product of various $B_s$ for $s \in \mc{S}$. 
While the general $B_w$ may be difficult to describe, $B_s$ has an easy description.

It is conjectured in~\cite{Soe4} that this isomorphism sends $[B_w]$ to $C'_w$ for all $w \in
\mc{W}$, and it is proven for $\Bbbk=\C$ and $\mc{W}$ a Weyl group in~\cite{Soe1}, using geometric methods.

Henceforth we specialize to the case where $\mc{W}=S_{n+1}$ and $\mc{S}=\{s_i\}$. We also make one additional
change from Soergel's conventions:

\begin{remark} \label{dumbchange} Soergel defines $R$ to be the coordinate ring of the $n$-dimensional
reflection representation $V$ of $S_{n+1}$, while we find it easier to consider $R'$, the coordinate ring of
the $n+1$-dimensional standard representation $V'$. This is akin to treating $\mathfrak{gl}_n$ instead of
$\mathfrak{sl}_n$, and a similar convention is adopted in~\cite{Ktriply}. The bimodules $B_w$ are defined in
\cite{Soe4} to be the coordinate rings of unions of ``twisted diagonals" in $V \times V$, and $B_w'$ can be
defined analogously for $V'$. Now $R' \cong R \otimes_{\Bbbk} \Bbbk[y]$ and the entire story of $B_w$
translates to $B_w'$ by base extension. Conversely, $R$ is a quotient of $R'$ by the first elementary symmetric
polynomial $e_1$, which is a central element of our category, so that the entire story of $B_w'$ translates
easily to $B_w$ under the quotient. We will interest ourselves with the $B_w'$ story below because the ring
$R'$ is slightly more intuitive, and mention briefly the changes required to deal with $R$ in Section
\ref{subsec-quotient}. Since we only use $B_w'$ and $R'$ below, we will denote them as $B_w$ and $R$ instead to
avoid an apostrophe catastrophe. \end{remark}

With these conventions, we now make the story explicit.

\begin{notation} Let $R= \Bbbk \left[ x_1, x_2,
  \ldots, x_{n+1} \right] $ be the ring of polynomials in $n+1$ variables, with the natural action
of $S_{n+1}$. The ring $R$ is graded, with $\deg(\xi)=2$. If $W$ is the subgroup of $S_{n+1}$
generated by transpositions $\{ s_{i_1},\ldots,s_{i_r} \}$, then we denote the ring of invariants
under $W$ as $R^{i_1,\ldots,i_r}$ or $R^W$. Thus $R^i$ are the invariants under the transposition
$(i,i+1)$.

Since $R$ is an $R^W$-algebra, $\oTen{R^W}$ is a bifunctor sending two $R$-bimodules to an
$R$-bimodule. Henceforth $\otimes$ with no subscript denotes tensoring over $R$, while
$\oTen{i_1,\ldots,i_r}$ denotes tensoring over the subring $R^{i_1,\ldots,i_r}$. Most commonly
we will just use $\Teni$ for various indices $i$.
\end{notation}

\begin{defn} The Soergel bimodule $B_i$ is $R\Teni R \{-1\}$, where $\{m\}$ denotes a grading shift.
\end{defn}

\begin{notation} We denote by $B_{\ii}$ the tensor product $B_{i_1}\TenR B_{i_2}\TenR \cdots \TenR B_{i_d}$.
\end{notation}

Note that $B_{\emptyset}=R$ and \begin{align} \label{useoften} B_{\ii}=& (R\oTen{i_1}R\{-1\})\TenR (R\oTen{i_2}R\{-1\})\TenR
\cdots \\ \nonumber &= R\oTen{i_1}R\oTen{i_2}\cdots\oTen{i_d}R\{-d\}.  \end{align} We reiterate this
important point: a typical element of a tensor product of $d$ generators $B_i$ can be expressed (up to linear
combination) by a choice of $d+1$ polynomials, one in each slot separated by the tensors. Multiplication by an
element of $R$ in any particular slot is an $R$-bimodule endomorphism.

For each $i$ there is a map of graded vector spaces $\partial_i \maps R \to R^i\{2\}$, called the
\emph{Demazure operator}, sending $f\mapsto \frac{f-s_i(f)}{\xi-x_{i+1}}$. This map is $R^i$-linear. Since
$\partial_i(f)$ is $s_i$-invariant, it is not hard to see that $P_i(f)=f-\xi\partial_i(f)$ is also
$s_i$-invariant. Since $f=P_i(f)+\xi\partial_i(f)$, this implies that $R$ is a free graded $R^i$-module of rank
two, with homogeneous generators 1 and $\xi$. In other words, there is an isomorphism $R \cong R^i \oplus
R^i\{2\}$ of graded $R^i$-modules, sending $f \mapsto (P_i(f),\partial_i(f))$, with inverse $(f,g) \mapsto
f+g\xi$.

From the isomorphism $R \cong R^i \oplus R^i\{2\}$ of graded $R^i$-modules just illustrated, one can deduce
other isomorphisms. For instance, $B_i = R \Teni R \{-1\} \cong R \{-1\} \oplus R\{1\}$ as graded left (or
right) $R$-modules. Repeating this, we see that $B_{\ii}$ is a free left $R$-module of rank $2^{d(\ii)}$ , and
properly belongs in $\RmolfZR$. Finally, we can deduce an isomorphism $B_i \Teni B_i \cong B_i\{1\} \oplus
B_i\{-1\}$, which unlike the previous isomorphisms is actually an isomorphism of $R$-bimodules.

\begin{remark} \label{forcingintro} Let us make this slightly more explicit. To give the isomorphism of left
$R$-modules $B_i \cong R\{-1\} \oplus R\{1\}$, note that \begin{equation} \label{force} f \teni g = f \teni
P_i(g) + f \teni \xi\partial_i(g)=P_i(g)f\teni 1 + \partial_i(g)f \teni \xi. \end{equation} Rewriting a term $1
\teni g$ as a sum of terms like above will happen often, and we refer to it as \emph{forcing} the polynomial
$g$ across the tensor. If $g$ is $s_i$-invariant then it may be slid across leaving nothing behind, while an
arbitrary $g$ when forced leaves terms with at either 1 or $\xi$ behind (alternatively, we may choose to leave
1 and $\xp$ behind, if it is more convenient). We consistently use the term ``slide" instead of ``force" when
the polynomial $g$ is invariant so it can be moved across without any ado.

Inside $B_i \ot B_i$, taking an element $f \teni g \teni h$ and forcing $g$ to the left (or right) is now an
$R$-bilinear operation, since multiplication on the left or right will only affect $f$ or $h$, not $g$. This
gives the isomorphism $B_i \ot B_i \cong B_i\{1\} \oplus B_i\{-1\}$ of $R$-bimodules, via $f \teni g \teni h
\mapsto (fP_i(g) \teni h,f\partial_i(g) \teni h)$ with inverse $(f_1 \teni h_1,f_2 \teni h_2) \mapsto f_1 \teni
1 \teni h_1 + f_2 \teni \xi \teni h_2$. These maps are $R$-bimodule morphisms, since the only polynomials which
can slide from $f_i$ to $h_i$ in both the source and the target of the map are those polynomials in $R^i$.
\end{remark}

\begin{remark} \label{spanningintro} We also remark on spanning sets for $B_{\ii}$ as $R$-bimodules. For
instance, we've seen that $B_i\TenR B_i$ has a spanning set $\{ 1\teni 1 \teni 1,\ 1\teni \xi \teni 1 \}$. The
bimodule $B_i \TenR B_j$ for $j\ne i$ has a spanning set $\{ 1\teni 1 \tenj 1\}$, since any polynomial in the
middle can be forced to the left leaving at most $\xi$ behind (or $\xp$, which we choose when $j=i-1$), and
that can be slid to the right; thus $1 \teni 1 \tenj 1$ generates it as a $R$-bimodule. The bimodule $B_i \TenR
B_j \TenR B_k \TenR B_i$ has a spanning set $\{ 1\teni 1 \tenj 1 \tenk 1 \teni 1,\ 1\teni \xi \tenj 1 \tenk 1
\teni 1 \}$. This is because all polynomials anywhere between the two $i$ tensors may be slid out, leaving
$\xi$ somewhere in-between. As an exercise, the reader may generalize this argument to an arbitrary $B_{\ii}$
and find a spanning set as a $R$-bimodule, consisting of $2^{m(\ii)}$ terms, where $m(\ii)$ is the number of
pairs $1 \le r < s \le d$ such that $i_r=i_s$ and $i_t \ne i_s$ for $t$ between $r$ and $s$. Between such a
pair, one either places a linear ``unslideable" term like $x_{i_r}$, or just $1$. Note that $m(\ii)$ is equal
to $d(\ii)$ minus the number of distinct indices in $\ii$. \end{remark}

For more information on Soergel bimodules and their applications we refer the reader to 
\cite{Soe1, Soe2, Soe3, Fie, Ktriply, Lib1, Lib2, Rou1, WW1, Wi} and references therein. 

%
\subsection{The Soergel Categorification}
\label{subsec-sbimcat}
%

Let us summarize the Soergel categorification of the Hecke algebra $\mc{H}$ of the symmetric 
group $S_{n+1}$ and the various structures on it, following~\cite{Soe1, Soe2, Soe4, Lib1, Rou1}.

Several subcategories of $\RmolfR$ and $\RmolfZR$ will play a role in what follows. Let $\mc{SC}_1$ be the full
subcategory of $\RmolfR$ whose objects consist of $B_{\ii}$ for all sequences of indices $\ii$; these are
called \emph{Bott-Samelson bimodules}. Since $R$ is a commutative ring, the Hom spaces in $\mc{SC}_1$ are in
fact enriched in $\RmolfZR$. Let $\mc{SC}_2$ be the subcategory of $\RmolfZR$ whose objects are finite direct
sums of various graded shifts of objects in $\mc{SC}_1$ and the morphisms are all grading-preserving bimodule
homomorphisms. Finally, let $\mc{SC}$ be the Karoubi envelope of $\mc{SC}_2$, a category equivalent to the full
subcategory of $\RmolfZR$ which contains all summands of objects of $\mc{SC}_2$:

\begin{equation*}
\begin{CD} 
\mc{SC}_1 @>{\mathrm{Grading \ shifts \ and \ direct \ sums \ \ }}>> \mc{SC}_2  
@>{\mathrm{Karoubi \ envelope \ \ }}>>     \mc{SC}
\end{CD}    
\end{equation*} 

In general, the Karoubi envelope is the category which formally includes all ``summands'', where a summand of
an object $M$ is identified by an idempotent $p \in\mathrm{End}(M)$ corresponding to projection to that
summand. Since $\RmolfZR$ is abelian, it is idempotent-closed, and the Karoubi envelope of $\mc{SC}_2$ can be
realized as a subcategory in $\RmolfZR$. We refer the reader to \cite{BM} for basic information about Karoubi
envelopes.

This final category $\mc{SC}$ (the category of \emph{Soergel bimodules}) is a $\Bbbk$-linear additive monoidal
category with the Krull-Schmidt property. Soergel showed that, when $\Bbbk$ is an infinite field of
characteristic other than $2$, the indecomposable bimodules in this category are enumerated by elements of the
Weyl group and grading shifts (Theorem 6.16 in~\cite{Soe4}). They are denoted by $B_w\{j\}$ for $w\in \mc{W}$
and $j\in \Z$. An indecomposable $B_w$ is determined by the condition that it appears as a direct summand of
$B_{\ii}$, where $\ii =i_1\dots i_d$ and $s_{i_1}\dots s_{i_d}$ is a reduced presentation of $w$, and does not
appear as direct summand of any $B_{\ii}$, for sequences $\ii$ of length less than $l(w)$.

The Hecke algebra $\mc{H}$ is canonically isomorphic to $K_0(\mc{SC})$, the Grothendieck group of $\mc{SC}$.
Multiplication by $v$ corresponds to the grading shift: $[M\{d\}] = v^d [M]$. Multiplication in the
Hecke algebra corresponds to the tensor product of bimodules: $$ [M] \cdot [N] \ := \ [M\otimes_R N].$$ The
isomorphism takes $[B_i]$ to $b_i$ and $[B_{\ii}]$ to $b_{\ii}$.

\begin{remark} Nothing prevents one from defining a category $\mc{SC}_{\Z}$ where the field $\Bbbk$ is replaced
with $\Z$ in the definitions of the previous section. Thus one could define the category for any ring. However,
one does not have control over the size of the Grothendieck group in this instance. When defined over a field
$\Bbbk$ of characteristic $\ne 2$, we may use Theorem 6.16 of \cite{Soe4} to classify indecomposables and get
results about the Grothendieck group. When $\Bbbk=\C$, Soergel has shown that the Kazhdan-Lusztig basis
$\{C'_w\}$ satisfies $C'_w=[B_w]$. This is unknown in general. \end{remark}

Relations (\ref{eqn-bisq})--(\ref{eqn-bibpbi}) lift to isomorphisms of graded bimodules in $\mc{SC}$

\begin{eqnarray}
B_i \TenR B_i & \cong & B_i\{ 1 \} \oplus B_i\{-1\} \label{eqn-iibimod}\\
B_i \TenR B_j & \cong & B_j \TenR B_i \ \mathrm{ for }\  |i-j|\ge 2\\
(B_i \TenR B_{i+1} \TenR B_i) \oplus B_{i+1} & \cong & (B_{i+1} \TenR B_i \TenR B_{i+1}) 
\oplus B_i . \label{eqn-ipibimod}
\end{eqnarray}

It is important to note that these isomorphisms take place in $\mc{SC}_2$, not $\mc{SC}_1$, since the latter
does not have grading shifts or direct sums of objects. However, the same information can be encapsulated in
inclusion and projection morphisms of various degrees, which do live in $\mc{SC}_1$. This will be explored in
Section \ref{subsec-functor}.

The first isomorphism has already been made explicit in Remark \ref{forcingintro}. We have chosen a specific
isomorphism; other choices were possible. The second and third isomorphisms come from the following
isomorphisms in $\RmolfZR$:

\begin{eqnarray}
B_i \TenR B_j \cong R \oTen{i,j} R\{-2\} \cong B_j \TenR B_i \ \mathrm{for}\ |i-j|\ge 2 
\label{eqn-auxbimod}\\
B_i \TenR B_{i+1} \TenR B_i \cong B_i \oplus (R \oTen{i,i+1} R\{-3\})\\
B_{i+1} \TenR B_i \TenR B_{i+1} \cong B_{i+1} \oplus (R \oTen{i,i+1} R\{-3\}).
\label{eqn-auxbimod2}
\end{eqnarray}

These isomorphisms will be made explicit in Section \ref{subsec-functor}.

The ring $\Zvv$ is canonically isomorphic to the Grothendieck group of the category $R\mathrm{-fmod}$ of
finitely-generated graded free $R$-modules. Under this isomorphism $$ K_0(R\mathrm{-fmod}) \cong \Zvv$$ $[R]$
goes to $1$ and $[R\{d\}]$ to $v^d$. In particular, given any graded free $R$-module $M$, its image in the
Grothendieck group will be its \emph{graded rank}, calculated by choosing a homogeneous $R$-basis $\{y_j\}$ of
$M$ and letting $\grdrk M \define \sum_j v^{\deg {y_j}}$. The bar involution on $\Zvv$ lifts to the
contravariant equivalence that takes $M\in R\mathrm{-fmod}$ to $\Hom_R(M,R),$ the latter naturally viewed as an
$R$-module.

In the category $\mc{SC}_1$, given any objects $M,N$, the space $\Hom_{\mc{SC}_1}(M,N)$ is a graded free
finitely-generated left $R$-module. By extension, the same is true of the module $\oplus_{m \in
\Z}\Hom_{\mc{SC}}(M\{m\},N)$. Shifting the grading of $N$ will shift the grading of this Hom space in the same
direction, while shifting $M$ will shift the Hom space in the opposite direction. Therefore, the bifunctor
$$\Hom_{\mc{SC}}(\cdot,\cdot) \colon \mc{SC}^{\rm op} \times \mc{SC} \to R\mathrm{-fmod}$$ categorifies a
semilinear form $\mc{H} \times \mc{H} \to \Zvv$ which sends $$([M],[N]) \mapsto \grdrk \big( \oplus_{m\in
\Z}\Hom_{\mc{SC}}(M\{m\},N)\big) .$$

The bimodule $B_i$ self-biadjoint, i.e. that $\Hom_{\mc{SC}_1}(M \TenR B_i,N)=\Hom_{\mc{SC}_1}(M,N \TenR B_i)$
and $\Hom_{\mc{SC}_1}(B_i \TenR M,N) = \Hom_{\mc{SC}_1}(M,B_i \TenR N)$ via some adjunction maps. This will
become explicit in Section \ref{subsec-dc1one}. In fact, every bimodule $M$ in $\mc{SC}$ has a biadjoint
bimodule $\Omega(M)$ such that tensoring with $M$ on the left (resp. right) is biadjoint to tensoring with
$\Omega(M)$ on the left (resp. right). Due to a cyclicity property (see the next section) any homomorphism
$f:M\longrightarrow N$ of bimodules dualizes to a canonical homomorphism $\Omega(f): \Omega(N) \longrightarrow
\Omega(M)$, so that $\Omega$ can be made into an antiequivalence of $\mc{SC}$, lifting the antiinvolution
$\omega$. Notice that $\Omega$ takes $B_i$ to itself and $B_{\ii}$ to $B_{\jj}$ where $\jj$ is given by reading
$\ii$ from right to left.

Unsurprisingly, the semilinear product on $\mc{H}$ above (induced by the Hom bifunctor) agrees with the one
defined in Section \ref{subsec-hecke}. To check this, following Remark~\ref{unicity} and using the
self-biadjointness of $B_i$, we only need to show the following claim.

\begin{claim} \label{increasinghom} When $\ii$ is an increasing sequence, $\Hom_{\mc{SC}_1}(R,B_{\ii})$ is a
free left $R$-module of rank 1, generated by a morphism of degree $d$, the length of $\ii$. \label{phiii}
\end{claim}

\begin{proof} We only sketch this result. An $R$-bimodule map from $R$ to $B_i$ is clearly determined by an
element of $B_i$ on which right and left multiplication by polynomials in $R$ are identical. Any element of
$B_i$ is of the form $m=f\teni 1 + g \teni x_i$, and clearly $\xj m=m \xj$ for $j \ne i,i+1$, and
$(\xi+\xp)m=m(\xi+\xp)$. Hence $m$ can be the image of 1 under a bimodule map from $R$ if and only if $\xi m=m
\xi$. $$m \xi= f \teni \xi + g \teni \xi^2 = f \teni \xi+ g(\xi+\xp) \teni \xi - g(\xi\xp) \teni 1.$$ Thus $m$
can be an image iff $g(\xi\xp)=-f\xi$ and $f+g(\xi+\xp)=g\xi$, iff $f=-g\xp$. Such elements form a left
$R$-module generated by the case $g=1$, $f=-\xp$, or in other words by $m=1 \teni \xi - \xp \teni 1$. The
element $m$ has degree 1 in $B_i$, so we deduce that $(R,B_i)=v$. Let us call $\phi_i$ the corresponding map $R
\to B_i$, $1 \mapsto m$.

One may use the same argument for the general case. Suppose $\ii$ is increasing and has length $d$. Then
$B_{\ii}$ is a free left $R$-module of degree $2^d$, with generators
$\{1\otimes_{i_i}f_1\cdots\otimes_{i_k}f_k\}$ ranging over terms where either $f_l=1$ or $f_l=x_{i_l}$. As an
exercise, the reader may find the criteria for a general element to be ad-invariant under $\xi$, and verify
that the only possible bimodule maps $R \to B_{\ii}$ are $R$-multiples of the following iterated version of
$\phi_i$: $$R \to B_{i_1} \cong B_{i_1} \TenR R \to B_{i_1} \TenR B_{i_2} \cong \cdots.$$ The first map is
$\phi_{i_1}$, the second map is $\mathrm{Id}\otimes \phi_{i_2}$, and so forth. This generator is a map of
degree $d$, so that $\Hom_{\mc{SC}_1}(R, B_{\ii})= R\{d\}$ and $([R],[B_{\ii}])=v^d$. \end{proof}

\begin{remark} We have swept the calculation under the rug, so the dependence of this claim on the fact that $\ii$ is
increasing is unclear. In general, when $\ii$ has a repeated index there will be additional maps from $R$ to $B_{\ii}$. Roughly
speaking, certain symmetry conditions are placed upon polynomials in order for them to slide across certain tensors. The
duplication of an index will yield a redundant symmetry condition that places fewer constraints on an ad-invariant element than
would be expected from the length of the sequence. We suggest the reader try to find all the maps from $R$ to $B_{\ii}$ in the
length 2 case, first when $\ii=ij$ has no repeated index and then when $\ii=ii$ has a repeated index. This should illustrate
the main idea. \end{remark}

Because it is a crucial statement which we use again and again, we restate the overall result and give a
reference.

\begin{prop} \label{homsinSC} Given two sequences $\ii$ and $\jj$, $\Hom_{\mc{SC}}(B_{\ii},B_{\jj})$ is a free
graded left (or right) $R$-module of rank $\epsilon(\omega(b_{\ii})b_{\jj})$, where $\epsilon$ is the standard
trace on $\mc{H}$ defined in Section \ref{subsec-hecke}. \end{prop}

\begin{proof} This is deduced from the above discussion. For Soergel's proof, see \cite{Soe4}, although it is
somewhat obscured. Theorem 5.15 in that paper and especially its proof together state this result, once one
unravels exactly what $h_\Delta$ means. Propositions 5.7 and 5.9 state that $h_\Delta(B_{\ii})=b_{\ii}$, since
$h_\Delta$ sends $B_i \TenR \cdot$ to $b_i \cdot$ and sends $R$ to $1$. \end{proof}

The facts below will not be used in this paper.

For a Soergel bimodule $M$ the space of bimodule homomorphisms $\Hom_{\mc{SC}_1}(R,M)$ is just the $0$-th
Hochschild cohomology $\mathrm{HH}^0(R,M)$ of $M$. Thus, unraveling the definitions, $$ \epsilon([M]) := \grdrk
\big( \mathrm{HH^0(R,M)}\big).$$ Calculations in Hochschild cohomology can be used to provide a proof of the
claim above. One could also define a trace map $\tau(x)=\overline{\epsilon(\omega(x))}$ which is the
decategorification of the functor $\mathrm{HH}_0$ of taking the 0th Hochschild homology.

Hecke algebra $\mc{H}$ has a trace more sophisticated than $\epsilon$ or $\tau$, called the Ocneanu
trace~\cite{Jo}, which describes the HOMFLY-PT polynomial. The categorification of the Ocneanu trace utilizes
all Hochschild homology groups rather than just $\mathrm{HH}_0$, see~\cite{Ktriply, WW1, WW2}.

The Rouquier complexes mentioned in the introduction are described here. Invertible elements $T_i$ that satisfy the braid
relations become~\cite{Rou1} invertible complexes $$ 0 \lra R \lra B_i \{ -1\} \lra 0 $$ in the homotopy category of the
Soergel category (with $R$ sitting in cohomological degree $-1$). This aligns with the fact that $T_i=v^{-1}b_i-1$ in the Hecke
quotient of the braid group. Their inverses $T_i^{-1}$ become inverse complexes $$ 0 \lra B_i\{1\} \lra R \lra 0 $$ with $R$ in
cohomological degree $1$, agreeing with $T_i^{-1}=vb_i-1$. The homomorphism from the braid group into the Hecke algebra is
categorified by a projective functor from the category of braid cobordisms between $(n+1)$-stranded braids to the category of
endofunctors of the homotopy category of the Soergel category~\cite{KhT}.

\begin{remark} Note that this convention for Rouquier complexes is opposite that found in~\cite{Rou1}, which is to say that we
have flipped $T_i$ with $T_i^{-1}$. Presumably this arises because we are using $v$ as a parameter for the grading shift, and
not $t=v^{-1}$. The choice of $v$ is more natural for the calculation of graded ranks of Hom spaces. \end{remark}

%
\subsection{Diagrammatic calculus for bimodule maps}
\label{subsec-diagintro}
%

We follow the standard rules for the diagrammatic calculus of bimodules, or more generally for the diagrammatic
calculus of a monoidal category. An excellent and thorough explanation of these rules can be found in
\cite{AL2}, so we will provide a quick summary. A planar diagram will represent a morphism of $R$-bimodules,
with the following conventions. A horizontal slice or line segment in this diagram will represent an object (an
$R$-bimodule). A rectangle inside the plane will represent a morphism from its bottom horizontal line segment
to its top horizontal line segment.

The $R$-bimodule $B_i$ is denoted by a point (on a horizontal line segment) labelled $i$. The tensor product of
bimodules is depicted by a sequence of labelled points on a horizontal line segment, so that tensor products
are formed ``horizontally''. A vertical line labelled $i$ denotes the identity endomorphism of $B_i$, and
similarly labelled lines placed side by side denote the identity endomorphism of the tensor product. More
general bimodule maps are represented by some symbols connecting the appropriate lines, and are composed
``vertically'', and tensored ``horizontally''. All diagrams are read from bottom to top, so that the following
diagram represents a bimodule map from $B_k$ to $B_i\TenR B_i \TenR B_j$.

\igc{.5}{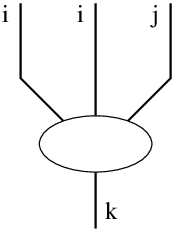}

A horizontal line segment which does not contain any marked points represents $R$ as a bimodule over itself,
the monoidal identity. The empty rectangle represents the identity endomorphism of $R$. Planar
diagrams without top and bottom endpoints (without boundary) represent more general endomorphisms of $R$.

The structure of bimodule categories (or more generally strict 2-categories) guarantees that a planar diagram
will unambiguously denote a morphism of bimodules.

We will be using so many such pictures that it will become cumbersome to continuously label each
line by an index. Generally, the calculations we do will work independently for each $i$, and can be
expressed with diagrams that use lines labelled $i$, $i+1$ and the like. In these circumstances,
when there is no ambiguity, we will fix an index $i$ and draw a line labelled $i$ with one style, a
line labelled $i+1$ with a different style, and so forth, maintaining the same conventions
throughout the paper.  We use different styles of lines because most printers are black and white, but
we recommend that you do your calculations at home in colored pen or pencils instead; 
we even refer to the labels as ``colors'' throughout this paper.

\igc{.5}{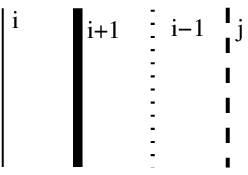}

We use the styles above when referring to indices $i$, $i+1$, $i-1$, and $j$, where $j$ will be
used unambiguously for any index which is ``far away'' from any other indices in the picture (in
other words, when drawing a picture only involving $i$-colored strands, we require $|i-j|>1$, while
for a picture involving both $i$ and $i+1$ we require $j<i-1$ or $j>i+2$).

%
\subsection{Methodology}
\label{subsec-moral}
%

\begin{prop} \label{prop-full} Suppose we choose the subset of the morphisms in $\mc{SC}_1$,
  including the identity morphism of each object, as well as the following morphisms:
\begin{enumerate}
\item The generating morphism from $R$ to $B_i$.
\item Some isomorphisms that yield the Hecke algebra relations, as well as the respective projections
  to and inclusions from each summand in (\ref{eqn-iibimod}) and (\ref{eqn-ipibimod}).
\item The unit and counit of adjunction that make $B_i$ into a self-biadjoint bimodule.
\end{enumerate}

Consider $\mc{C}$ the subcategory generated monoidally over the left action of $R$ by these
morphisms, i.e. it includes left $R$-linear combinations, compositions, and tensors of all its
morphisms. Then $\mc{C}$ is a full subcategory, and thus it is actually $\mc{SC}_1$.
\end{prop}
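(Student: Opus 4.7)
The plan is to reduce from general Hom spaces to those out of $R$ by self-adjunction, then to decompose via the Hecke-relation isomorphisms until the target is $B_\kk$ for an increasing sequence $\kk$, at which point the Hom space is free of rank one and is generated by an iterated version of $\phi_i$.

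First, since the unit and counit of the self-adjunction of each $B_i$ lie in the generating set, repeated composition with them produces a natural isomorphism
$$\Hom_{\mc{SC}_1}(B_\ii, B_\jj) \cong \Hom_{\mc{SC}_1}(R, B_{\ii^{\mathrm{op}}}\TenR B_\jj),$$
where $\ii^{\mathrm{op}}$ denotes the reverse of $\ii$. Any morphism $f \colon B_\ii \to B_\jj$ is recovered from its adjoint $\widetilde{f}\colon R \to B_{\ii^{\mathrm{op}}}\TenR B_\jj$ by tensoring with identities and composing with the appropriate counits, all of which are available in $\mc{C}$. It therefore suffices to prove that for every sequence $\kk$, every bimodule map $R \to B_\kk$ lies in $\mc{C}$.

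Next, I would induct on a suitable complexity of $\kk$ (say, on length, breaking ties by the number of non-increasing adjacencies), using the three Hecke-relation isomorphisms and their projections and inclusions to simplify $B_\kk$. If $\kk$ contains an adjacent repetition, the isomorphism $B_i \TenR B_i \cong B_i\{1\} \oplus B_i\{-1\}$ lets me transport a map $R \to B_\kk$ to a pair of maps with strictly shorter target. If $\kk$ has an adjacent pair of entries with $|i-j| \geq 2$ out of order, the commutation isomorphism $B_i \TenR B_j \cong B_j \TenR B_i$ reorders them at no cost in length. If $\kk$ contains an adjacent $i, i{+}1, i$ pattern, the braid-relation isomorphism (\ref{eqn-ipibimod}) together with its projections and inclusions expresses a map $R \to B_i \TenR B_{i+1} \TenR B_i$ in terms of maps $R \to B_{i+1}\TenR B_i\TenR B_{i+1}$, $R \to B_{i+1}$, and $R \to B_i$. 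As in Remark \ref{unicity}, iterating these moves drives $\kk$ to a sum of increasing sequences, and the inductive hypothesis reduces everything to the increasing base case.

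For $\kk$ increasing, the argument given in Section \ref{subsec-sbimcat} already identifies $\Hom_{\mc{SC}_1}(R, B_\kk)$ as the free left $R$-module of rank one generated by the composition
$$R \xrightarrow{\phi_{i_1}} B_{i_1} \xrightarrow{\id \otimes \phi_{i_2}} B_{i_1}\TenR B_{i_2} \longrightarrow \cdots \longrightarrow B_\kk.$$
This generator is manifestly built from identities and the generator $\phi_i$, hence lies in $\mc{C}$; closing under the left $R$-action then produces the whole Hom space.

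The hard part will be the braid-relation step. Unlike the other two Hecke relations, (\ref{eqn-ipibimod}) mixes two distinct summands on each side, so one cannot simply replace $B_i\TenR B_{i+1}\TenR B_i$ by a direct sum of shorter or reordered sequences. Tracking a map through the isomorphism by its inclusion into and projection from each summand, and organizing the reduction so that the chosen complexity of $\kk$ strictly decreases rather than looping, is the genuine bookkeeping content beyond the adjunction reduction and the increasing-sequence base case.
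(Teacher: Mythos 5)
Your approach is genuinely different from the paper's, and it has a real gap in the reduction step. The paper's proof does not attempt an explicit word reduction at all: it observes that $\Hom_{\mc C}(M,N)\subset\Hom_{\mc{SC}_1}(M,N)$ is an inclusion of graded left $R$-modules, defines the semilinear form $(b_\ii,b_\jj)=\grdrk\Hom_{\mc C}(B_\ii,B_\jj)$ on the free algebra, notes that the direct sum decompositions force the Hecke relations into the kernel of this form (so it descends), checks that each $b_i$ is self-adjoint for it, and computes the form on increasing monomials (where $\Hom_{\mc C}(R,B_\ii)$ visibly contains the generator $\otimes\phi_{i_l}$). Then Remark~\ref{unicity} gives the conclusion in one stroke, with all the combinatorial bookkeeping hidden inside the unicity statement.

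The gap in your proposal is this: you reduce everything to maps $R\to B_\kk$ and then claim that ``iterating these moves drives $\kk$ to a sum of increasing sequences,'' citing Remark~\ref{unicity}. But Remark~\ref{unicity} explicitly requires \emph{both} the Hecke relations and \emph{cycling the last $b_i$ to the front}, and cycling is exactly the operation your reduction step omits. The Hecke relations alone do not drive every monomial to increasing form. The smallest counterexample is $\kk=(1,2,1)$: no $b_i^2$ move applies, no commutation applies, and the braid move only trades $b_1b_2b_1$ for $b_2b_1b_2$ plus length-one terms, which the braid move then trades back---an infinite loop at constant length. To pin down $\Hom(R,B_1\TenR B_2\TenR B_1)$ you must be allowed to write $(1,b_1b_2b_1)=(b_1,b_1b_2)=(1,b_1b_1b_2)$, i.e.\ cycle a factor around using the self-adjunction, after which $b_1^2$ collapses. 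Your proposal invokes self-adjunction only once at the start to pass from $\Hom(B_\ii,B_\jj)$ to $\Hom(R,B_{\ii^{\mathrm{op}}}\TenR B_\jj)$, and never again inside the induction, so the induction cannot reach the increasing base case. You correctly flag the braid step as the hard part, but the diagnosis is off: it is not just a matter of choosing a complexity function for the braid move, it is that an essential operation (cycling) is missing from the toolkit, and once it is added one is effectively re-proving Remark~\ref{unicity} by hand---precisely the work that the paper's unicity argument is designed to avoid.
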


\begin{proof} For any objects $M,N$ in $\mc{SC}_1$, there is an inclusion $\Hom_{\mc{C}}(M,N)
  \subset \Hom_{\mc{SC}_1}(M,N)$ of graded left $R$-modules (since it is clearly an inclusion of
  $R$-modules, and all generating morphisms are homogeneous).  One can define $\grdrk$ for any
  graded left $R$-module $M$ by choosing generators of $M/(R^+M)$, where $R^+$ is the ideal of
  positively graded elements, and it is a simple argument that a submodule of a free graded
  $R$-module with the same graded rank is in fact the entire module.  So we need only show that Hom
  spaces in $\mc{C}$ have the same graded rank.

  We can define a semilinear form on the free $\Zvv$-algebra generated by $b_i$ by the formula $(b_{\ii},b_{\jj}) = \grdrk
  \Hom_{\mc{C}}(B_{\ii},B_{\jj})$. The existence of isomorphisms and projection maps will give us the direct sum decompositions
  (\ref{eqn-iibimod}) - (\ref{eqn-ipibimod}) in $\mc{C}$, with the resulting implications for Hom spaces. Therefore the Hecke
  algebra relations (\ref{eqn-bisq}) - (\ref{eqn-bibpbi}) are in the kernel of this semilinear form, so it descends to a form
  on the Hecke algebra. Each $b_i$ will be self-adjoint. When $\ii$ is increasing, $\Hom_{\mc{C}}(R,B_{\ii})$ contains the
  generator of the free rank one $R$-module $\Hom_{\mc{SC}_1}(R,B_{\ii})$, since that generator is the tensor of the generating
  morphisms from $R$ to various $B_i$ (see the proof of Claim \ref{phiii}). Hence it is in fact the entire module, so
  $\Hom_{\mc{C}}(R,B_{\ii}) \cong R\{d\}$, and $(1,b_{\ii})=v^d$.

  By unicity, this inner product agrees with our earlier inner product on the Hecke
  algebra. In particular, the graded ranks agree, and the inclusion is full.
\end{proof}

Below we will construct a category $\mc{DC}_1$ of diagrams via generators and local relations, where
the Hom spaces will be graded $R$-bimodules. We will construct a functor $\mc{F}_1$ from $\mc{DC}_1$
to $\mc{SC}_1$, showing that our diagrams give graphical
presentation of morphisms in $\mc{SC}_1$.  The morphisms in the image of $\mc{F}_1$ will
include all the morphisms enumerated in Proposition \ref{prop-full}, hence the functor will be
full. Calculating the Hom spaces in $\mc{DC}_1$ between certain objects (corresponding to $R$,
$B_{\ii}$ for $\ii$ increasing), we may use a similar argument to the above proposition to show that
they are free $R$-modules of the same graded rank as the Hom spaces in $\mc{SC}_1$.  Then
the functor $\mc{F}_1$ will be faithful, and an equivalence of categories. This describes
$\mc{SC}_1$ in terms of generators and relations.

Let $\mc{DC}_2$ be the category whose objects are finite direct sums of formal grading shifts of
objects in $\mc{DC}_1$, but whose morphisms only include degree 0 maps. Finally, let
$\mc{DC}=Kar(\mc{DC}_2)$ be the Karoubi envelope of $\mc{DC}_2$.  The functor $\mc{F}_1$ lifts to
functors $\mc{F}_2$ and $\mc{F}$, as in the picture below, with all three horizontal arrows being
equivalences of categories.

\begin{equation}\label{sixcategories} 
\begin{CD} 
 \mc{DC}_1  @>{\mc{F}_1}>>  \mc{SC}_1  \\
@VVV  @VV{\mathrm{Grading \ shifts \ and \ direct \ sums \ \ }}V  \\
\mc{DC}_2  @>{\mc{F}_2}>>  \mc{SC}_2  \\
@VVV  @VV{\mathrm{Karoubi \ envelope \ \ }}V  \\
\mc{DC}  @>{\mc{F}}>> \mc{SC}
\end{CD}    
\end{equation} 

We will define the category $\mc{DC}_1$ originally without reference to isotopy, in order to make the
definition of the functor $\mc{F}_1$ entirely straightforward, using the standard rules for diagrammatics for
bimodules. The category would be entirely unchanged if one used different pictures to represent each morphism.
However, when the ``correct'' pictures are chosen for the generators, then every morphism can actually be
viewed as a \emph{planar graph}, and moreover two embedded graphs linked by isotopy represent the same
morphism. One could very well define $\mc{DC}_1$ originally using graphs, but this would obscure the definition
of $\mc{F}_1$.

The most difficult part of the proof will be showing the faithfulness of $\mc{F}_1$, which involves a
calculation of certain Hom spaces in the diagrammatic category. This calculation will be made possible by the
planar graphs interpretation of $\mc{DC}_1$, wherein some relatively simple graph theory can be applied to
simplify pictures.

\section{Definition of $\mc{DC}$}
\label{sec-defns}

This section contains a piecemeal definition of $\mc{DC}$ and $\mc{DC}_1$. For pedagogical reasons, we prefer
to provide commentary as we go, instead of defining the category all at once (in fact, some relations do not
make sense without the commentary). We also provide some redundant relations in the first pass, because they
help make the category more intuitive. However, we repeat the definition all in one place in Section
\ref{subsec-fulldefn}, without redundant relations, where we also explicitly define the functor $\mc{F}_1$.

%
\subsection{The category $\mc{DC}_1$: zero colors and one color}
\label{subsec-dc1one}
%

This section and the next several will hold the definition of the category $\mc{DC}_1$, which will
be a $\Bbbk$-linear additive monoidal category, with $\Z$-graded Hom spaces. Shortly it will become
clear that Hom spaces are actually graded $R$-bimodules. It is generated monoidally by $n$ objects
$i=1, \ldots, n$, whose tensor products will be denoted $\ii=i_1\ldots i_d$.

Morphisms will be given by (linear combinations of) diagrams inside the strip $\R \times [0,1]$,
constructed out of lines colored by an index $i$, and certain other planar diagrams, modulo local
relations. The intersection of the diagram with $\R \times \{0\}$, called the \emph{lower boundary},
will be a sequence $\ii$ of colored endpoints, the source of the map, and the \emph{upper boundary}
$\jj$ will be the target. A vertical line colored $i$ represents the identity map from $i$ to
$i$. The monoidal structure consists of placing diagrams side by side, and composition consists of
placing diagrams one above the other, in the standard fashion for diagrammatic categories.

We present the generators and relations in an order based on the number of colors they use. The one-color
generators and relations will be sufficient to describe the category for $n=1$, the two-color ones for $n=2$,
and the three-color ones for the general case. The set of all relations is invariant under all color changes
that preserve adjacency, so we only display each generator for a single color $i$, using the conventions
described in Section \ref{subsec-diagintro}. However, the generator exists for each index $i$.

All the relations we will give are homogeneous with respect to the grading on generators stated.

The first class of generators, which use no colors, are the following endomorphisms of the monoidal identity
$\emptyset$:

\vspace{.5cm}

$\ig{.4}{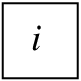}$

\vspace{.5cm}

There is one such generator for each $i=1 \ldots n+1$. It is a map of degree 2, which we call
\emph{multiplication by } $\xi$. After we apply the functor $\mc{F}_1$, this will actually correspond to the
endomorphism of $R$ given by multiplication by $\xi$. Together, these generators are called \emph{boxes}. A
morphism from $\emptyset$ to $\emptyset$ consisting of a sum of disjoint unions of boxes will be called a
\emph{polynomial}. Since the composition of multiplication by $\xi$ and multiplication by $\xj$ is
multiplication by $\xi\xj$, such a sum of products of boxes will obviously correspond under the functor to
multiplication by an element $f\in R$. As a shorthand we draw such a morphism as a box with the
corresponding element $f$ inside. As a map from $\emptyset$ to $\emptyset$, and thus a closed diagram, a
polynomial may be placed in any region of another diagram. Placing boxes in the rightmost and leftmost regions
of a diagram will define the $R$-bimodule structure on Hom spaces in $\mc{DC}_1$.

The generating morphisms which use only one color are:

$
\begin{array}{ccc}
\mathrm{Symbol} & \mathrm{Degree} & \mathrm{Name} \\
\\
\igv{.3}{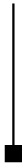} & 1 & \mathrm{EndDot}\\
\\
\ig{.3}{dot.eps} & 1 & \mathrm{StartDot}\\
\\
\igv{.3}{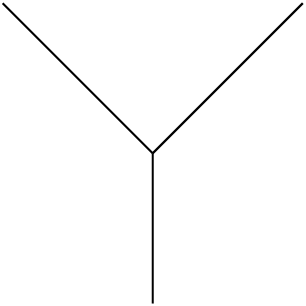} & -1 & \mathrm{Merge}\\
\\
\ig{.3}{merge.eps} & -1 & \mathrm{Split}
\end{array}
$

For the beginner, the maps are respectively: a map from $i$ to $\emptyset$, a map from $\emptyset$ to $i$, a
map from $ii$ to $i$, a map from $i$ to $ii$.

Remember, there is one such set of generators for each color $i$. We give these maps names, but the names are
temporary. Once we explore the meaning of isotopy invariance, we will stop distinguishing between Merge and
Split, and call them both \emph{trivalent vertices}. Similarly we will stop distinguishing between StartDot and
EndDot, and call them both \emph{dots}.

We also use a shorthand for the following compositions:

$
\begin{array}{ccc}
\mathrm{Symbol} & \mathrm{Degree} & \mathrm{Name}\\
\\
  \igv{.3}{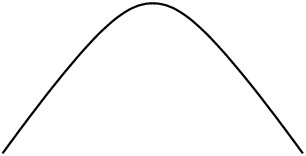} \define \igv{.3}{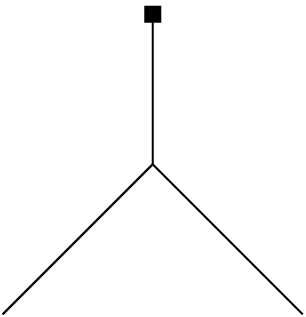} & 0 & \mathrm{Cup} \\
  \\
  \ig{.3}{cap.eps} \define \ig{.3}{dotSplit.eps} & 0 & \mathrm{Cap}
\end{array}
$

We now list a series of relations using only one color, the \emph{one-color relations}, dividing them into
several types of relations for ease of reference. The first set we refer to as the \emph{Frobenius relations},
since they imply that $i$ is a Frobenius object in $\mc{DC}_1$ (see \cite{AL1,Muger} for more on Frobenius
algebras). Once we define the functor $\mc{F}_1$, this will imply that $B_i$ is a Frobenius object in
$\mc{SC}_1$. Remember that the cups and caps appearing below can actually be rewritten in terms of the
generators.

\begin{equation}\ig{.2}{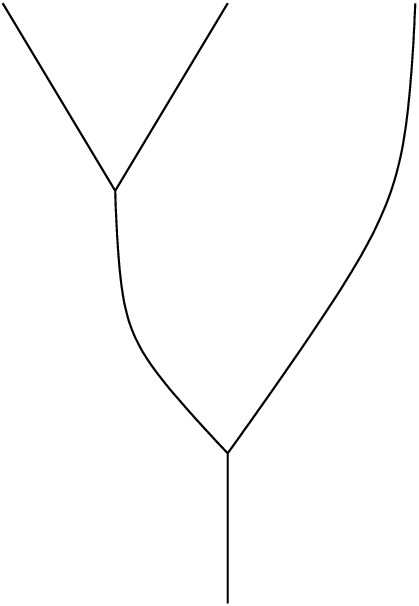} = \igh{.2}{doubleMerge.eps} \label{assoc}\end{equation}
\begin{equation}\igv{.2}{doubleMerge.eps} = \ighv{.2}{doubleMerge.eps} \label{coassoc}
\end{equation}
\begin{equation}\ig{.2}{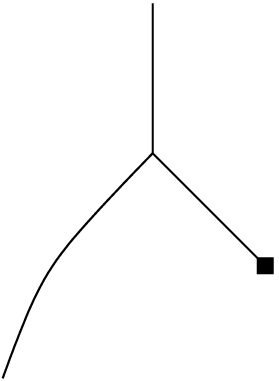} = \ig{.5}{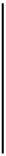} = \igh{.2}{splitDot.eps}\label{counit}
\end{equation}
\begin{equation}\igv{.2}{splitDot.eps} = \ig{.5}{line.eps} = \ighv{.2}{splitDot.eps}\label{unit}
\end{equation}
\begin{equation} \ig{.25}{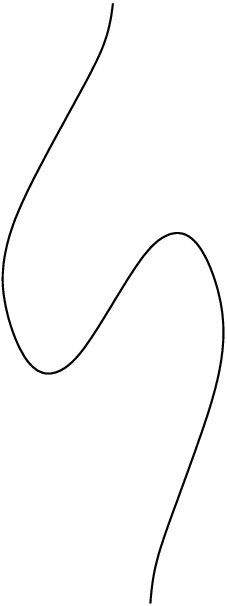} \;\; = \;\; \ig{.2}{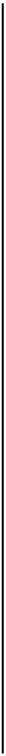} \;\; = \;\; 
 \igh{.25}{cupCap.eps}\label{biadjoint}\end{equation}
\begin{equation}\ig{.3}{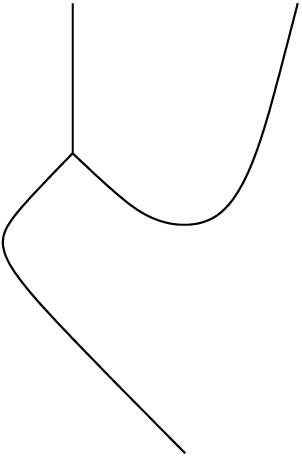} \  = \ \ig{.3}{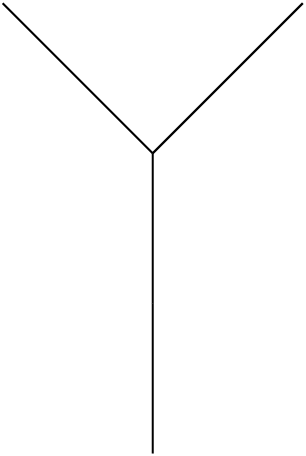} \  = \ \igh{.3}{splitCup.eps}
\label{twistMerge}\end{equation}
\begin{equation}\igv{.3}{splitCup.eps} \  =  \ \igv{.3}{tallMerge.eps}  \ =  \ \ighv{.3}{splitCup.eps}
\label{twistSplit}\end{equation}
\begin{equation}\ig{.3}{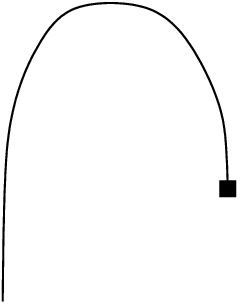} \  =  \ \igv{.4}{dot.eps}  \ =  \ \igh{.3}{capDot.eps}
\label{twistDot1}\end{equation}
\begin{equation}\igv{.3}{capDot.eps} \  = \  \ig{.4}{dot.eps} \  =  \ \ighv{.3}{capDot.eps}
\label{twistDot2}\end{equation}

For quick reference, we refer to these relations by their Frobenius algebra names. The first two are the
associativity of Merge and the coassociativity of Split. The next two are the unit and counit relations.
Relation (\ref{biadjoint}) is the biadjunction relation, and the final four are cyclicity relations.

\begin{remark} \label{cyclicity} For readers not well versed in cyclicity properties and their implications towards isotopy
invariance, let us quickly discuss the topic, using the easily visualized notion of a \emph{twist}. Given a
morphism, one can twist it by taking a line which goes to the upper boundary and adding a cap, letting the line
go to the other boundary instead. An example is given below.

\begin{center} 
$\ig{.4}{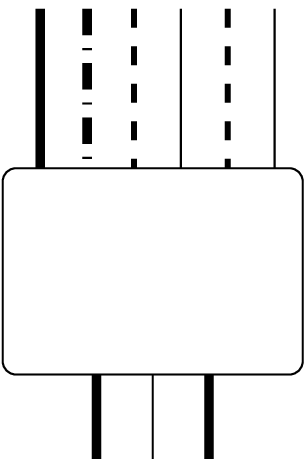} \ \ \ $ twists to $ \ \ \  \ig{.4}{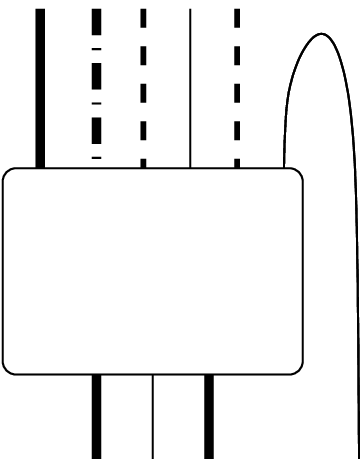}$
\end{center} 

One can also twist a downward line back up, or twist lines on the left as well. Two morphisms are twists of
each other if they are related by a series of these simple twists, using cups and caps on the right and left
side. For instance, relations (\ref{twistMerge}) and (\ref{twistSplit}) state that the Merge is a simple twist
of the Split, twisting on the left or right. If one applies the same twist to every term in a relation, one
gets a twist of that relation. For instance, relation (\ref{unit}) is actually a twist of the definition of the
cup.

Because of biadjointness (\ref{biadjoint}), twisting a line down and then back up will do nothing to the
morphism. Once biadjointness is shown, all twists of a relation are equivalent, because twisting in the reverse
direction we get the original relation back. When a morphism has a total of $m$ inputs and outputs, twisting a
single strand will often be referred to as rotation by $\frac{180}{m}$ degrees.

The above relations imply that twisting any of the above generators by 360 degrees will do nothing. A morphism
is said to be \emph{cyclic} with respect to a fixed set of adjunctions (i.e. cups and caps) if 360 rotation
does not change the morphism. Cyclicity is useful because of the following proposition.

\begin{prop} Fix adjunctions of each object, which are drawn as caps and cups. If every generating morphism in
a diagram is cyclic with respect to those adjunctions, then so is the entire diagram, and the morphism
represented by that diagram is invariant under isotopy of the diagram. \end{prop}

For more on diagrammatics of biadjointness and the cyclicity property we refer the reader to~\cite{B1, B2, CKS,
AL1, AL2}. \end{remark}

Merge and Split are 60 rotations of each other, and each is invariant under 120 degree rotation, so we may
represent them isotopy-unambiguously with pictures that satisfy the same properties. A similar statement holds
for StartDot and EndDot. We will refer to these morphisms as dots and trivalent vertices from now on, because
these terms encapsulate the picture up to rotation.

\begin{remark} \label{liberties} Henceforth, can take more liberties in our drawings. We can draw a horizontal
line colored $i$, and even though this can not be constructed using our generators, it is isotopy equivalent to
a cup or cap which can be so constructed. We can allow a diagram to have a boundary not just on the top or
bottom, but also on the side. While this does not represent a morphism in our category, the line running to the
side boundary can be twisted either up or down to represent a genuine morphism. A relation drawn using diagrams
with side boundaries does unambiguously give a relation in $\mc{DC}_1$. \end{remark}

Associativity and coassociativity are twists of each other. This relation is written in a rotation-invariant
form below, and shall be crucial in the sequel. We refer to this relation, which permits one to ``slide'' one
trivalent vertex over another, as \emph{one-color associativity}.

\begin{equation}\igrotCW{.3}{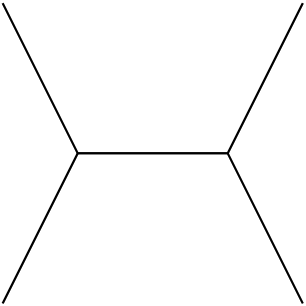} \ \ = \ \ \ig{.3}{Xdiagram.eps} \label{eqn-associativity}
\end{equation}

We refer to either picture above as an ``H''. The horizontal line in the right picture is exactly such a
liberty as in Remark \ref{liberties}.

Note that relations (\ref{assoc}), (\ref{biadjoint}), (\ref{twistMerge}), and (\ref{twistDot1}) are
sufficient to imply the other Frobenius relations, because of the remarks about twisting
made above.  Here is the proof of half of (\ref{twistSplit}) using (\ref{twistMerge}), as an
illustrative example.

\begin{equation*} \igv{.3}{tallMerge.eps} \  = \ \igv{.3}{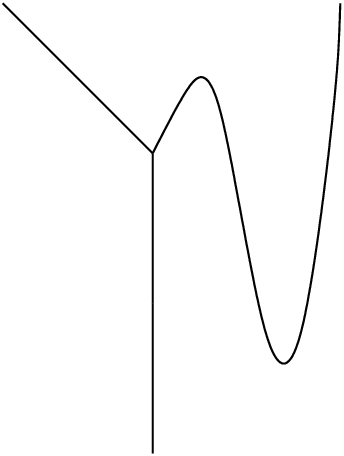} \ \ 
= \ \ \igv{.3}{splitCup.eps}\end{equation*}

The next set of relations are known as \emph{polynomial slides}, which have obvious analogies in the
definitions of the modules $B_i$.

\begin{eqnarray}
\ig{.4}{multxi.eps}\ \ig{.6}{line.eps}\ +\ \ig{.4}{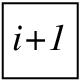}\ \ig{.6}{line.eps}\ &\ =
  \ &\ \ig{.6}{line.eps}\ \ig{.4}{multxi.eps}\ +\ \ig{.6}{line.eps}\ \ig{.4}{multxp.eps}
 \label{eq-slide1} \\    \ \  &  &  \ \  \nonumber  \\
\ig{.4}{multxi.eps}\ \ig{.4}{multxp.eps}\ \ig{.6}{line.eps}\ &\ =\ &\ \ig{.6}{line.eps}\ 
  \ig{.4}{multxi.eps}\ \ig{.4}{multxp.eps} \label{eq-slide2} \\
  \ \  &  &  \ \   \nonumber \\  
\ig{.4}{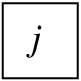}\ \ig{.6}{line.eps}\ &\ =\ &\ \ig{.6}{line.eps}\ \ig{.4}{multxj.eps}
\label{eq-slide3}
\end{eqnarray}

The $j$ appearing in the box in the last relation can be any index not equal to $i$ or $i+1$. Together, these
relations imply precisely that any polynomial which is invariant under $s_i$ can be slid across a line colored
$i$, since $R^i$ is generated by $\xi+\xp$, $\xi\xp$, and $\xj$ for $j \ne i,i+1$. Therefore, for an arbitrary
polynomial $f$, we have the following immediate consequence (see Remark \ref{forcingintro}):

\begin{prop} \label{forcediagram} We may force a polynomial to the other side of a line, leaving at most $\xi$ behind, as follows:

\begin{eqnarray}
\ig{.4}{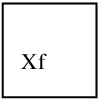}\ \ \ig{.8}{line.eps} \ & \ =
  \ &\ \ \ig{.8}{line.eps}\ \ \ig{.4}{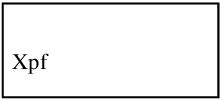}\ \ +\ \ \ig{.5}{multxi.eps} 
   \ \ \ig{.8}{line.eps}\ \  \ig{.4}{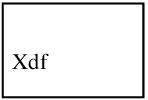}
 \label{eq-slide5} \\  
  \ \  &  &  \ \  \nonumber  \\
\ig{.8}{line.eps}\ \ \ig{.4}{boxf.eps} \ & \ = \ &\  
 \ig{.4}{boxpf.eps} \ \ \ig{.8}{line.eps}\ \ +\ \ig{.4}{boxdf.eps} \ \ \ig{.8}{line.eps}\ \ 
\ig{.5}{multxi.eps} \label{eq-slide6} 
\end{eqnarray}
\end{prop}

\begin{proof} This is proven the same way as (\ref{force}). \end{proof}

Now for the final one-color relations. First, the \emph{dot relations}:

\begin{equation}\ig{.5}{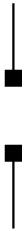} \ = \  \ig{.4}{multxi.eps}\ \ig{.75}{line.eps} \ -
   \  \ig{.75}{line.eps}\ \ig{.4}{multxp.eps} \ = \  -\ \ig{.4}{multxp.eps}\ \ig{.75}{line.eps} \ +
   \  \ig{.75}{line.eps}\ \ig{.4}{multxi.eps}\label{dotSpaceDot}\end{equation}

\begin{equation}\ig{.5}{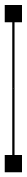} \ = \ \ig{.4}{multxi.eps} \ - \ \ig{.4}{multxp.eps} 
 \label{doubleDot}\end{equation}

The second equality in (\ref{dotSpaceDot}) is just the relation (\ref{eq-slide1}). Now for the \emph{needle
relation}:

\begin{equation}\ig{.35}{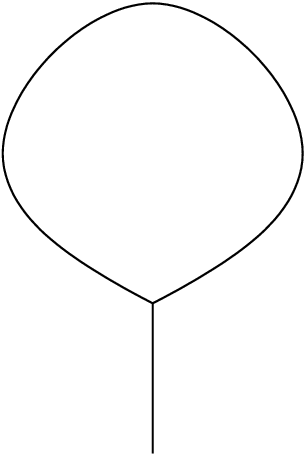} \ \ = \ \ 0 \label{needle} \end{equation}

It is important to realize that such a relation does \emph{not} apply if there is anything inside the eye of
the needle, as can be seen in the following examples.

\begin{example}
	Combining these relations, we have a number of simple but important consequences, which we leave as easy exercises to get the reader used to the diagrammatic calculus.
		\begin{equation} \ig{.35}{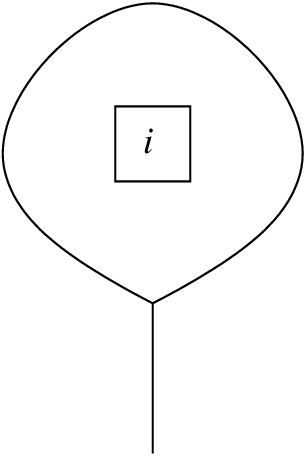} \ \ = \ \ \ \  \ig{.35}{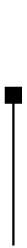} \label{needleWithEye} \end{equation}
			Use the first dot relation, then the needle relation and the unit relation.
		\begin{equation} \label{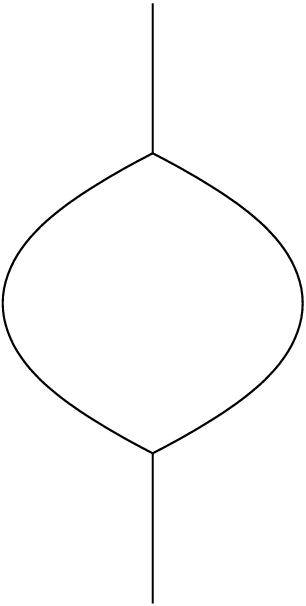} \ig{.3}{needle.eps} \ \ = \ 0 \end{equation}
			Use the needle relation and associativity.
		\begin{equation} \ig{.25}{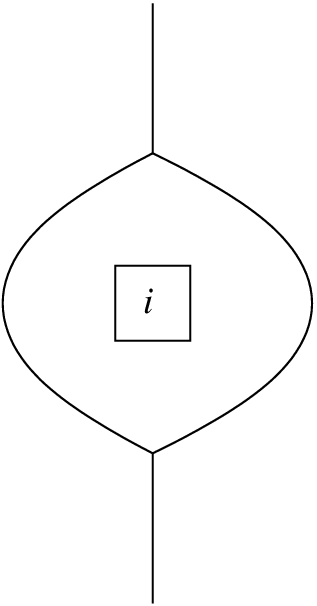} \ \ \ = \ \ \ \  \ig{.25}{tallLine.eps} \end{equation}
			As above, with the unit relation.
		\begin{align*} \ig{.35}{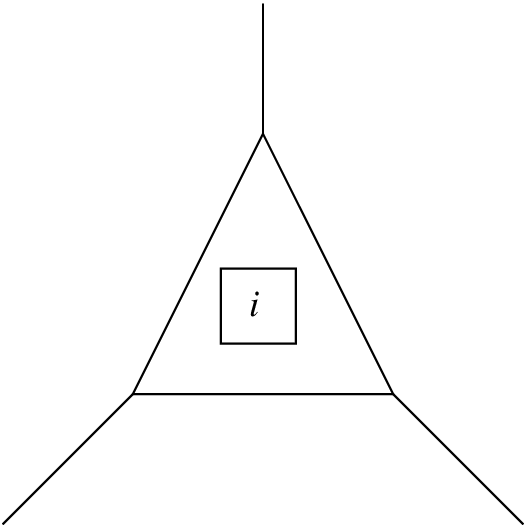}& \quad = \quad \ig{.3}{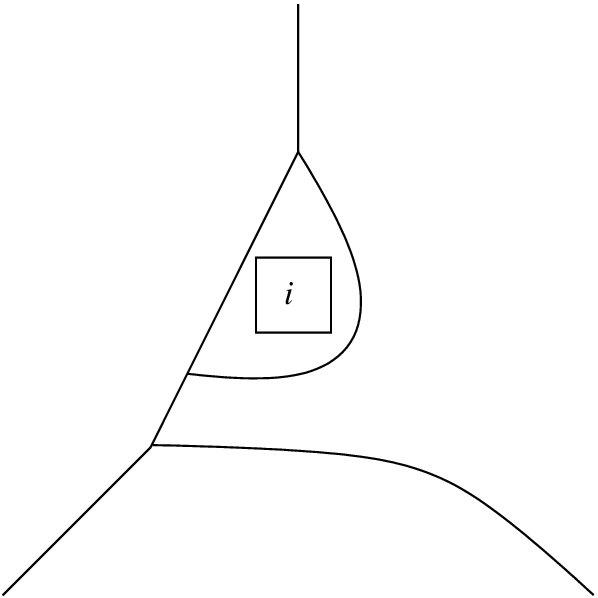} \quad = 
			\quad \ig{.3}{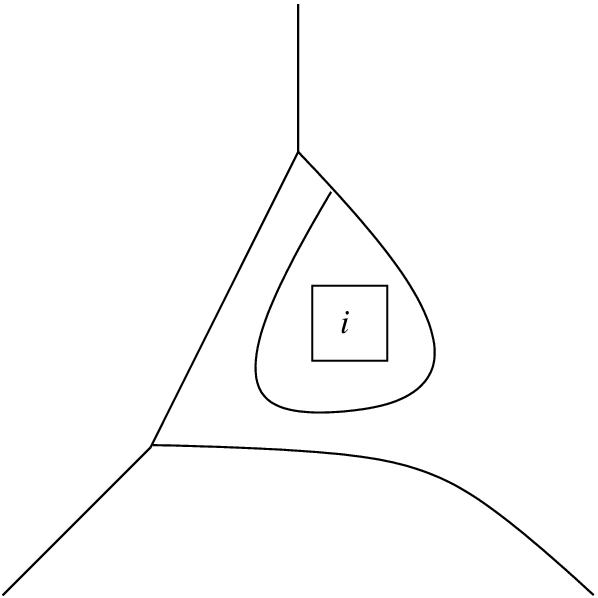}\\
			& \quad = \quad \ig{.3}{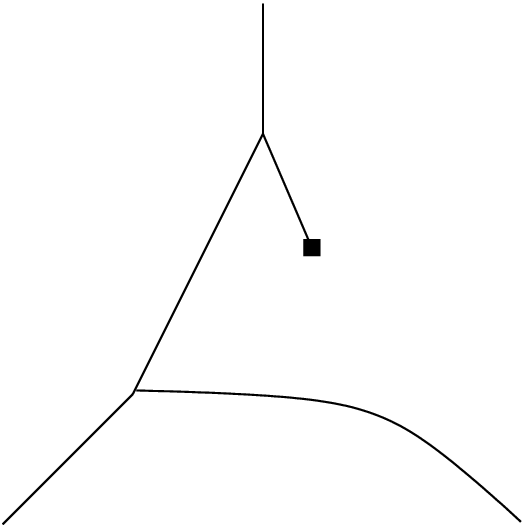} \quad = \quad \ig{.3}{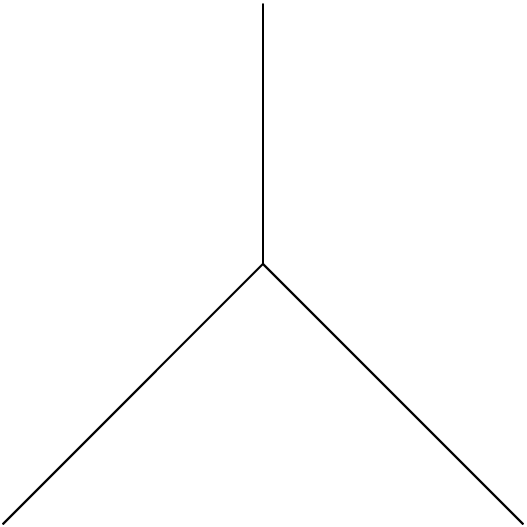} \end{align*}
			This is effectively the same example again, with more uses of associativity.
\end{example}

As the examples demonstrate, and the following proposition proves, we may remove cycles of this nice form from
a one-color graph.

\begin{prop} The following relations hold.

\begin{equation}\ig{.35}{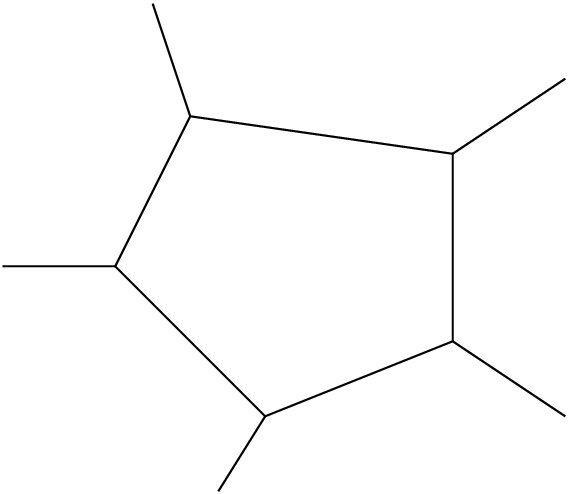} \ = \ \ \  0  \end{equation} 

\begin{equation} \ig{.35}{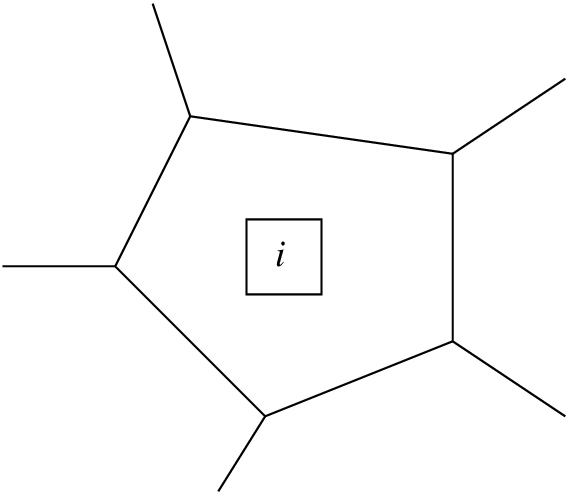} \ = \ \ \ \  \ig{.35}{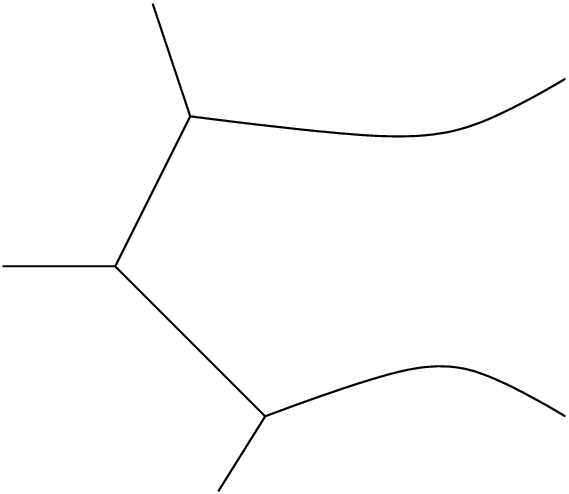} \end{equation}

More generally, for any polynomial $f$ we have 

\begin{equation} \ig{.4}{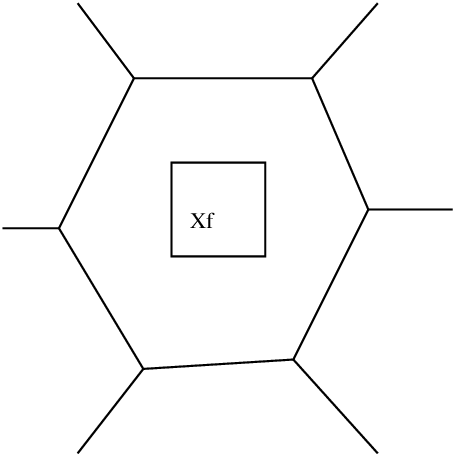} \ \ \ = \ \ \ \ig{.4}{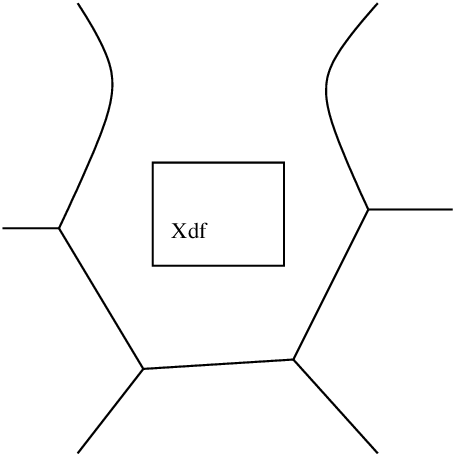} \label{holepoly} \end{equation} 
\end{prop}

\begin{proof} This is a simple consequence of (\ref{forcediagram}), along with the needle, associativity, and
unit relations. \end{proof}

There is another relation which is equivalent (given the others) to the first equality in (\ref{dotSpaceDot}).

\begin{align} \ig{.4}{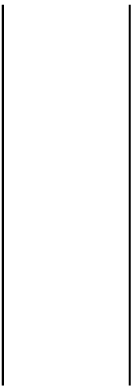}& \ \  = \ \  \ig{.4}{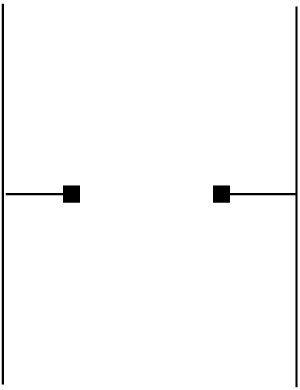} \ \  = \ 
\ig{.4}{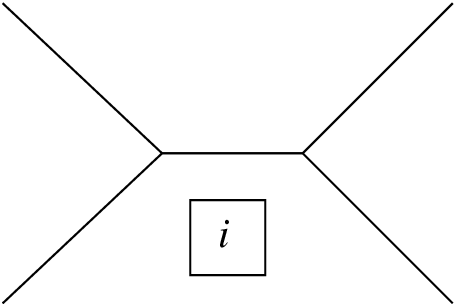} \ - \ \ig{.4}{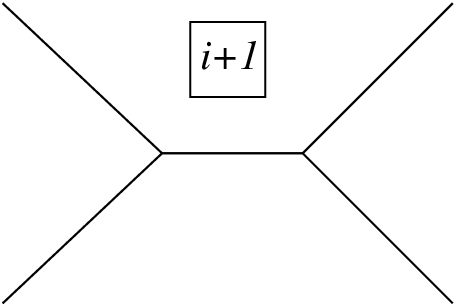} \label{twoLines}\\   \ \nonumber   \\
 &= \ \ig{.4}{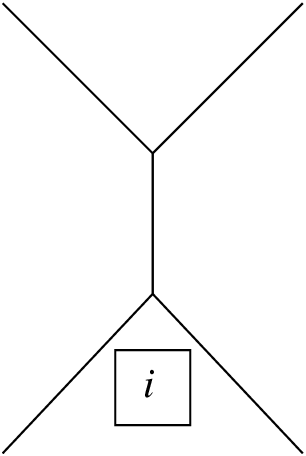} \ -\ \ig{.4}{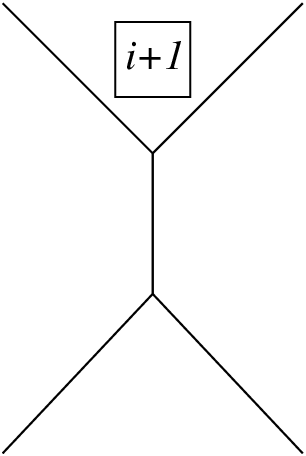} \nonumber \end{align}

This relation quickly leads to the decomposition $B_i \TenR B_i = B_i\{1\} \oplus B_i\{-1\}$, see Section
\ref{subsec-functor}.

For a single color and two variables $x_1,x_2$, the category above, modulo the relation 
$x_1=x_2$,  is equivalent to the category considered by Libedinsky~\cite{Lib3} in the case 
of a single label $r$. Morphisms given by dot, Merge, Split, and Cap correspond to 
morphisms $\hat{\epsilon_r}$, $\hat{m}_r$, $\hat{p}_r$, $\hat{j}_r$ and  
$\hat{\alpha}_r$ in~\cite[Section 2.4]{Lib3}. Planar graphical notation, of paramount 
importance to us, is implicit in~\cite{Lib3}. 
From here on, we diverge from Libedinsky's work, by generalizing to the 
case of the Weyl group $S_{n+1}$, while Libedinsky~\cite{Lib3} investigates the right-angled case.

%
\subsection{The category $\mc{DC}_1$: adjacent colors}
\label{subsec-dc1adj}

We now add some generators which mix adjacent colors, which we call \emph{6-valent vertices}. Remember that the
thick lines represent $i+1$, and the thin lines represent $i$.

$$
\begin{array}{cc}
\begin{array}{cc}
  \mathrm{Symbol} & \mathrm{Degree} \\
  \\
  \igv{.4}{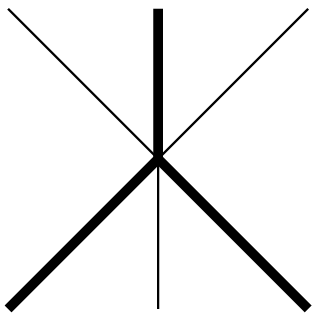} & 0\end{array} &
\begin{array}{cc}
  \mathrm{Symbol} & \mathrm{Degree} \\
  \\
  \ig{.4}{6valent.eps} & 0\end{array}
\end{array}$$

For the beginner, these maps are respectively: a map from $i(i+1)i$ to $(i+1)i(i+1)$, a map from $(i+1)i(i+1)$
to $i(i+1)i$.

Below are the relations which deal with our new generators. In addition to the relations below, we also impose
the same relations with the colors switched. The two color variants in general do not imply each other.
However, it is better to think of the two colors as being arbitrary adjacent colors, rather than one being $i$
and the other $i+1$; then one views these relations as generic for adjacent colors.

\begin{equation}
\ig{.2}{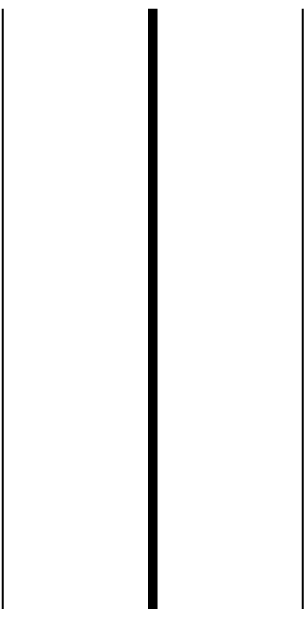}\ \ =\  \ \ig{.2}{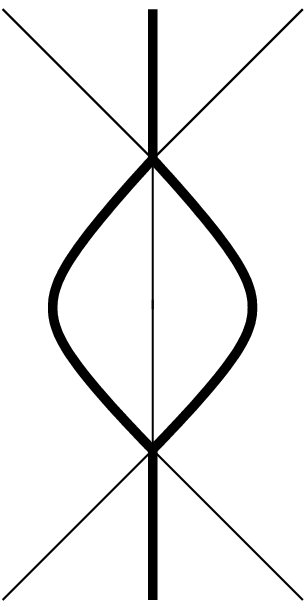} \ \ - \ \ \ig{.2}{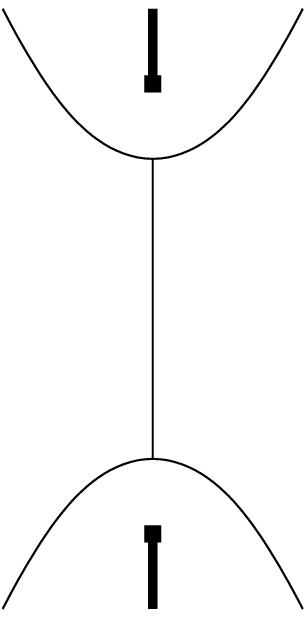} 
\label{eqn-threeLines}
\end{equation} 

\begin{equation}
\ig{.32}{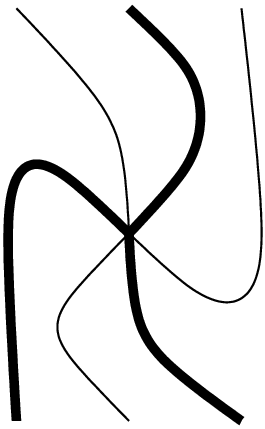} \ \ \  =\ \ \   \ig{.4}{6valent.eps} \ \ \ = 
\ \ \   \igh{.32}{ipipipRot1.eps} \label{eqn-ipipipRot}
\end{equation}

\begin{equation}
\ig{.3}{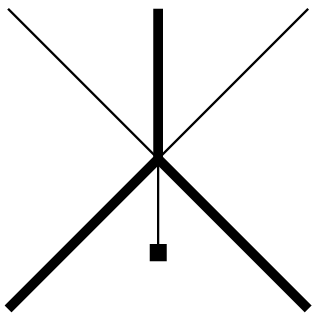} \ \ = \ \ \ \ig{.3}{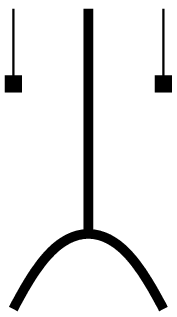} \ \ + \ \ \igh{.3}{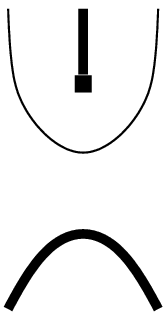} 
\label{eqn-ipipipDot}
\end{equation}

\begin{equation}
\ig{.3}{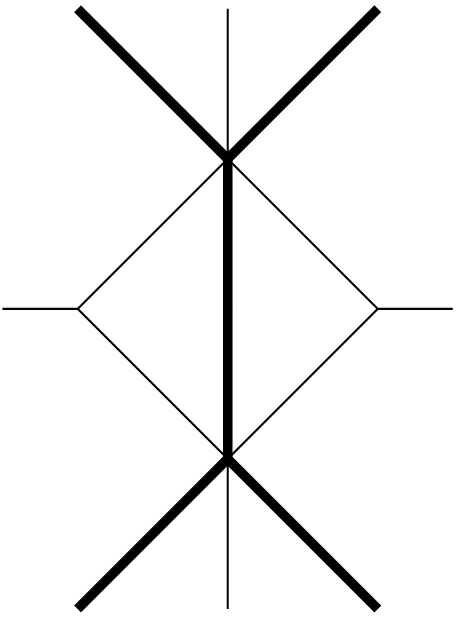} \ \ = \ \ \igrotCW{.3}{ipipipAss.eps} \ \ = 
\ \ \ig{.3}{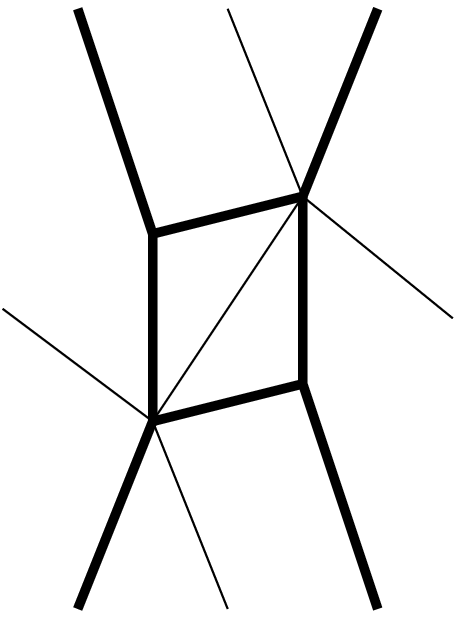} \ \ = \ \  \igh{.3}{ipipipAss2.eps} \label{eqn-ipipipAss}
\end{equation}

\begin{equation}
\ig{.3}{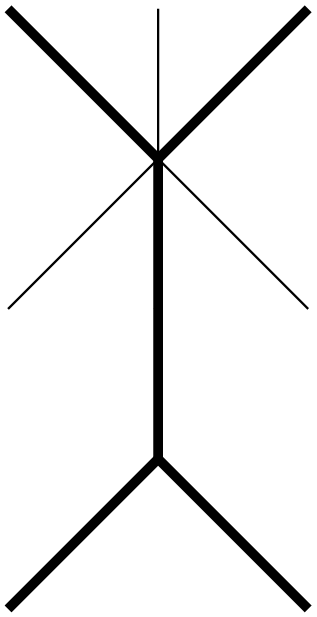} \ \ \ \ = \ \ \ \  \ig{.3}{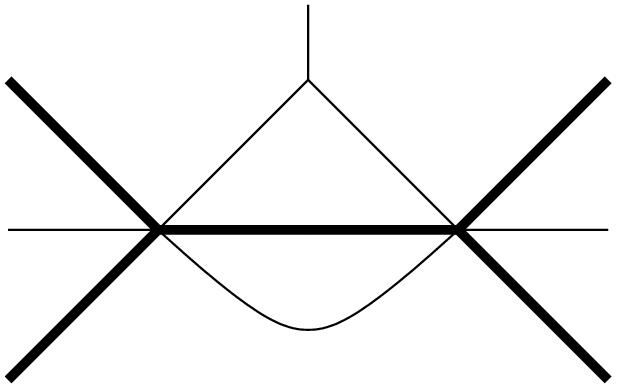} \label{eqn-ipipipAssWDot}
\end{equation}

It will be shown in Section \ref{subsec-functor} that the first relation is related to the isomorphism $(B_i
\TenR B_{i+1} \TenR B_i )\oplus B_{i+1}\cong (B_{i+1} \TenR B_i \TenR B_{i+1}) \oplus B_i$.

The second relation shows that the 6-valent vertex is cyclic, that drawing it as a 6-valent vertex is
unambiguous, and that isotopy classes of diagrams built out of our local generators will still unambiguously
designate a morphism. See Remark \ref{cyclicity} for details. Because of this, we have used the liberties of
Remark \ref{liberties} when writing the last two relations. Note that (\ref{eqn-ipipipRot}) does in fact imply
the color-switched version of that same relation, using (\ref{biadjoint}).

The relation (\ref{eqn-ipipipAss}) contains a number of equalities, and it is clear that the last equality is
merely a rotation of the color switch of the first equality. In fact, there are numerous redundancies amongst
(\ref{eqn-ipipipAss}) and (\ref{eqn-ipipipAssWDot}). It is a worthwhile exercise for the reader at this point
to check the following statement.

\begin{example} Assume the relation (\ref{eqn-ipipipDot}) and those before it. Then any pair of equalities from
(\ref{eqn-ipipipAss}) will imply both color variants of (\ref{eqn-ipipipAssWDot}) as well as the remainder of
the equalities from (\ref{eqn-ipipipAss}). Hint: adding a dot to the relation (\ref{eqn-ipipipAss}) allows one
to recover (\ref{eqn-ipipipAssWDot}), while the latter may be applied twice within the former. \end{example}

An important feature to notice is that the 6-valent vertex can be visualized as two trivalent vertices, one of
each color, that overlap. If one takes a graph constructed out of dots, trivalent vertices, and 6-valent
vertices (our generators so far), then the subgraph formed by all edges of a specific color $i$ will have only
univalent and trivalent vertices. We use the term \emph{two-color (overlap) associativity for} $i+1$ to refer
to the transformation performed by either (\ref{eqn-ipipipAssWDot}) or the first equality of
(\ref{eqn-ipipipAss}), because when viewed as an operation on the ``thick''-colored graph, these operations
mimic one-color associativity (\ref{eqn-associativity}). Note that, under the same transformations, the
``thin''-colored graph (labelled $i$) is transformed in a different way. However, the color-switched relations
will give two-color associativity for $i$ instead.

%
\subsection{The category $\mc{DC}_1$: distant colors}
\label{subsec-dc1dist}
%

Fix $j$, an index which is not adjacent to $i$. In pictures involving both $i$ and $i+1$, we also
assume $j$ is not adjacent to $i+1$. Remember that $j$ is represented by a dashed line. This new
generator is called a \emph{4-valent vertex}, or a \emph{crossing}.

$$
\begin{array}{cc}
  \mathrm{Symbol} & \mathrm{Degree}\\
  \\
  \igv{.5}{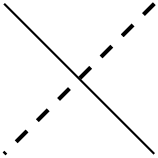} & 0
\end{array}
$$

Note that this definition also covers the same picture with the colors reversed. The colors $i$ and
$j$ can be switched freely since the only requirement was that they were distant from each other.

Now for relations involving the new generator.

\begin{equation}
\ig{.4}{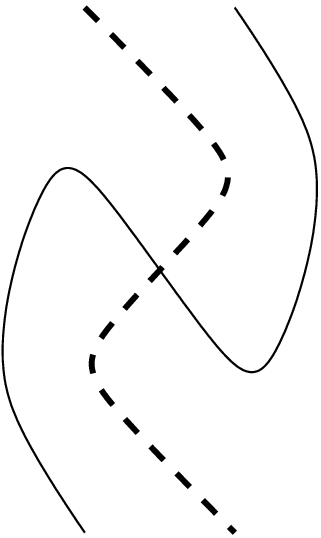}\ \ = \ \ \ig{.4}{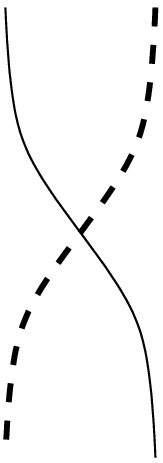}\ \ =\ \ \ig{.4}{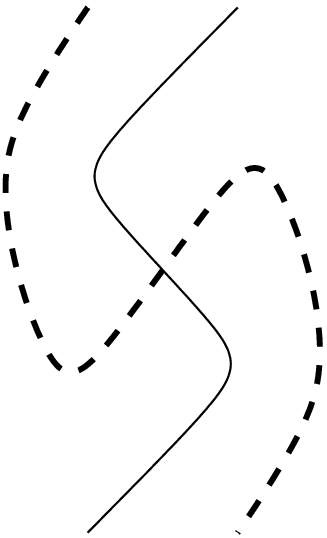} \label{eqn-ijijRot}
\end{equation}
\begin{equation}
\ig{.5}{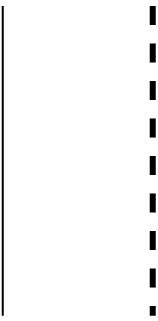}\ \ =\ \ \ig{.5}{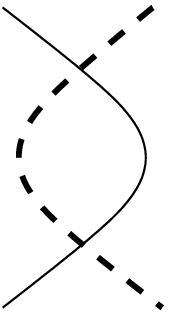} \label{eqn-R2move}  
\end{equation}
\begin{equation}
\ig{.6}{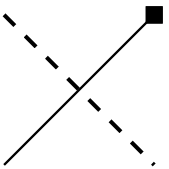} \ \ = \ \ \ig{.6}{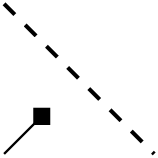} \label{eqn-ijijDot}
\end{equation}
\begin{equation}
\ig{.4}{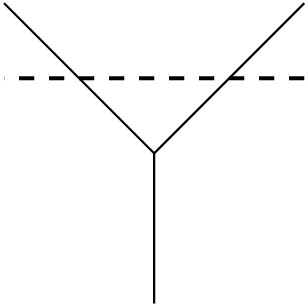} \ \ = \ \ \ig{.4}{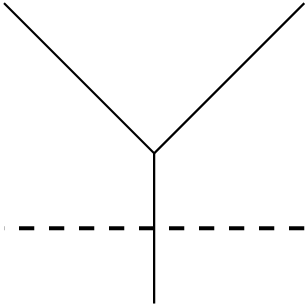} 
\label{eqn-pullFarThruTrivalent} 
\end{equation}
\begin{equation}
\ig{.4}{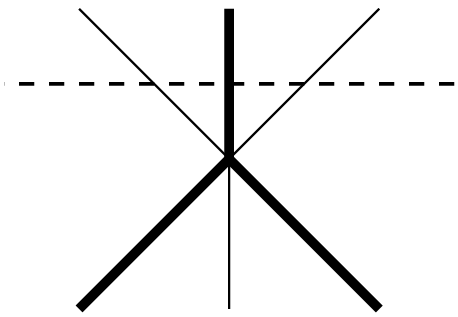} \ \ = \ \ \ig{.4}{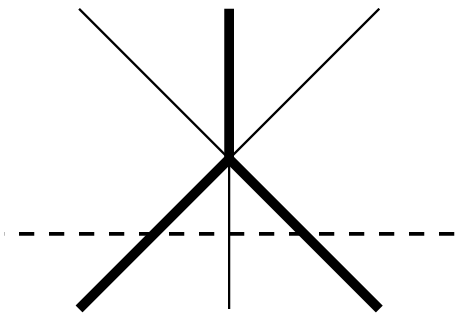}  
\label{eqn-pullFarThru6Valent} \end{equation}
\begin{equation}
\ig{.4}{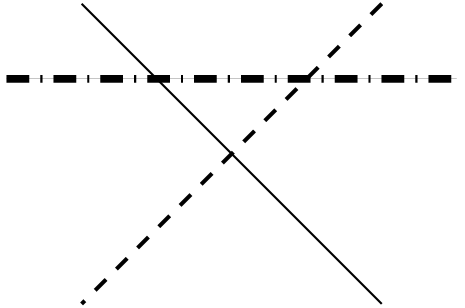} \ \ = \ \ \ig{.4}{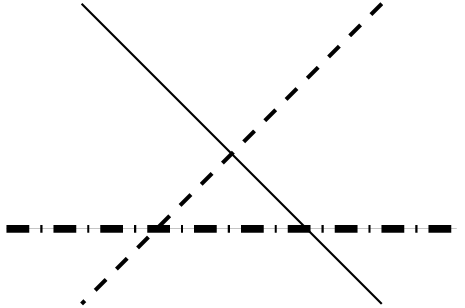}  \label{eqn-R3move}
\end{equation}

Relation (\ref{eqn-pullFarThru6Valent}) holds when you switch $i$ and $i+1$, but one color variant will follow
quickly from the other by twisting and applying the first relation. In the final relation, the new color
represents an index $k$ which is distant from both $i$ and $j$. We will refer to (\ref{eqn-R2move}) and
(\ref{eqn-R3move}) as the R2 and R3 moves respectively, because of the obvious analogy to knot theory. The R2
relation is essentially the isomorphism $B_i\TenR B_j \cong B_j \TenR B_i$, see Section~\ref{subsec-functor}.

The same statements about cyclicity and drawing diagrams with sideways boundaries apply from before (see
Remarks \ref{cyclicity} and \ref{liberties}). Once again, the 4-valent vertices are drawn so that morphisms are
isotopy invariant.

The relations (\ref{eqn-R2move})-(\ref{eqn-R3move}) imply that a $j$-colored strand can just be pulled
underneath any morphism only using colors distant from $j$, since it can be pulled under any generating
morphism, whether it be a line, a dot, a trivalent vertex, or a 6-valent vertex. In fact, thanks to
(\ref{unit}), the R2 move follows from (\ref{eqn-ijijDot}) and (\ref{eqn-pullFarThruTrivalent}).

We have now listed all the generators of our subcategory: trivalent, 4-valent, and 6-valent
vertices, and dots. There is one final relation, coming from the fact that $i+1$ and $i-1$ may
not interact, but they do jointly interact with $i$.  The final relation will be called \emph{three-color (overlap) associativity for} $i \pm 1$:

\begin{equation} \ig{.4}{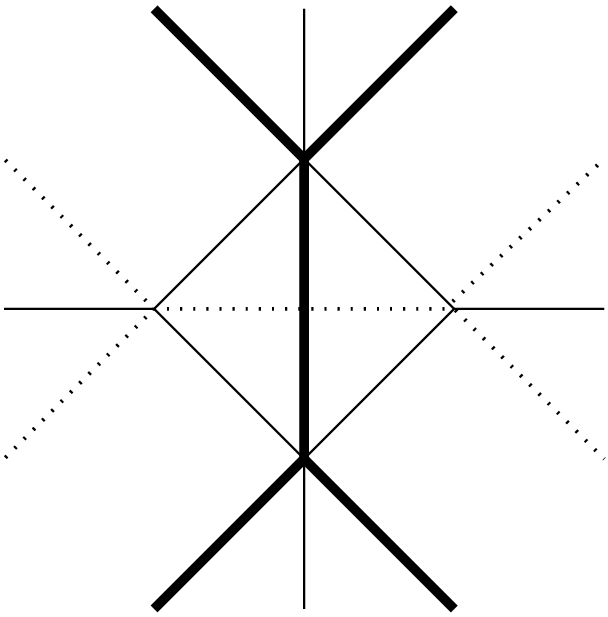} =
  \ig{.4}{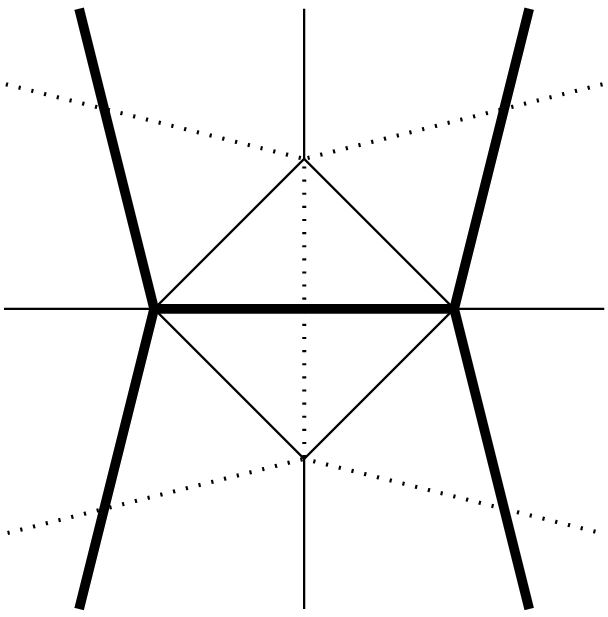} \label{eqn-threeColorAss} \end{equation}

In the above diagram, dotted lines carry label $i-1$, thick lines $i+1$ and thin solid lines $i$. Rotating this
relation by 90 degrees, we get the same relation except with $i+1$ and $i-1$ switched, so that only color
variant is needed to imply both. The ``thick''-colored graph undergoes the associativity transformation. The
same is true (symmetrically) with the ``dotted''-colored graph.

This concludes the definition of the category $\mc{DC}_1$.

%
\subsection{The complete definition and the functor $\mc{F}_1$}
\label{subsec-fulldefn}
%

In order to put everything in one place with no redundancy, let us define the category again.

\begin{defn} The category $\mc{DC}_1$ has objects given by sequences $\ii$ of indices in $\{1,\ldots,n\}$, with
a monoidal structure given by concatenation. Fix two sequences $\ii$ and $\jj$. Consider the set of all
diagrams in $\R \times [0,1]$, constructed out of vertical lines colored by indices, and out of the generating
pictures below, such that the intersection of the diagram with $\R \times \{0\}$ is the sequence of points
$\ii$, and the intersection with $\R \times \{1\}$ is $\jj$. This set is graded, where the generators have the
degree indicated. Then the space $\Hom_{\mc{DC}_1}(\ii,\jj)$ is defined to be the $\Bbbk$-linear span of this
set of diagrams, modulo the homogeneous local relations below.

{\bf Generators}

\begin{itemize}
\item For each color, pictures of degree $1$: $\igv{.3}{dot.eps} \ \ \ \ \ \ig{.3}{dot.eps}$
\item For each color, pictures of degree $-1$: $\igv{.3}{merge.eps} \ \ \ \ \ \ig{.3}{merge.eps}$
\item For each pair of distant colors, a picture of degree $0$: $\ig{.5}{4valent.eps}$
\item For each pair of adjacent colors, a picture of degree $0$: $\ig{.4}{6valent.eps}$
\item For each $i \in \{1, \ldots, n+1\}$, a picture of degree $2$: $\ig{.4}{multxi.eps}$
\end{itemize}

{\bf Relations} 

Some relations are drawn using the liberties of Remark \ref{liberties}. We also use the
definition of the cap and cup:

\[ \igv{.3}{cap.eps} \define \igv{.3}{dotSplit.eps}\quad\quad \ig{.3}{cap.eps} \define \ig{.3}{dotSplit.eps} \]

For each color:

\[\ig{.25}{cupCap.eps} \;\; = \;\; \ig{.2}{tallLine.eps} \;\; = \;\;  \igh{.25}{cupCap.eps} \quad \quad \quad
\ig{.3}{splitCup.eps} \  = \ \ig{.3}{tallMerge.eps} \  = \ \igh{.3}{splitCup.eps} \]

\[\ig{.2}{doubleMerge.eps} = \igh{.2}{doubleMerge.eps} \quad \quad
\ig{.3}{capDot.eps} \  =  \ \igv{.4}{dot.eps}  \ =  \ \igh{.3}{capDot.eps} \quad \quad
\ig{.25}{needle2.eps} \ \ = \ \ 0\]

Here, $i$ is the color of the line, and $j$ is a color $\ne i,i+1$.

\[\ig{.4}{multxi.eps}\ \ig{.6}{line.eps}\ +\ \ig{.4}{multxp.eps}\ \ig{.6}{line.eps}\ \ =
  \ \ \ig{.6}{line.eps}\ \ig{.4}{multxi.eps}\ +\ \ig{.6}{line.eps}\ \ig{.4}{multxp.eps}\]

\[\ig{.4}{multxi.eps}\ \ig{.4}{multxp.eps}\ \ig{.6}{line.eps}\ \ =\ \ \ig{.6}{line.eps}\ 
  \ig{.4}{multxi.eps}\ \ig{.4}{multxp.eps}\]

\[\ig{.4}{multxj.eps}\ \ig{.6}{line.eps}\ \ =\ \ \ig{.6}{line.eps}\ \ig{.4}{multxj.eps}\]

\[\ig{.5}{dotSpaceDot.eps} \ = \  \ig{.4}{multxi.eps}\ \ig{.75}{line.eps} \ -
   \  \ig{.75}{line.eps}\ \ig{.4}{multxp.eps} \ = \  -\ \ig{.4}{multxp.eps}\ \ig{.75}{line.eps} \ +
   \  \ig{.75}{line.eps}\ \ig{.4}{multxi.eps}\]

\[\ig{.5}{doubleDot.eps} \ = \ \ig{.4}{multxi.eps} \ - \ \ig{.4}{multxp.eps}\]

For any two adjacent colors:

\[\ig{.3}{threeLines1.eps}\ \ =\  \ \ig{.3}{threeLines4.eps} \ \ - \ \ \ig{.3}{threeLines3.eps}\]

\[\ig{.32}{ipipipRot1.eps} \ \ \  =\ \ \   \ig{.4}{6valent.eps} \ \ \ = \ \ \   \igh{.32}{ipipipRot1.eps}\]

\[\ig{.3}{ipipipDot1.eps} \ \ = \ \ \ \ig{.3}{ipipipDot2.eps} \ \ + \ \ \igh{.3}{ipipipDot3.eps}\]

\[\ig{.3}{ipipipAss.eps} \ \ = \ \ \igrotCW{.3}{ipipipAss.eps}\]

For any two distant colors:

\[\ig{.4}{ijijRot1.eps}\ \ = \ \ \ig{.4}{4valent2.eps}\ \ =\ \ \ig{.4}{ijijRot2.eps}\]

\[\ig{.6}{ijijDot1.eps} \ \ = \ \ \ig{.6}{ijijDot2.eps}\]

\[\ig{.4}{pullFarThruTrivalent1.eps} \ \ = \ \ \ig{.4}{pullFarThruTrivalent2.eps}\]

For two adjacent colors and a third, distant to both:

\[\ig{.4}{pullFarThru6Valent1.eps} \ \ = \ \ \ig{.4}{pullFarThru6Valent2.eps}\]

For three mutually distant colors:

\[\ig{.4}{R3move.eps} \ \ = \ \ \ig{.4}{R3move2.eps}\]

For three colors with the same adjacency as $\{1,2,3\}$:

\[\ig{.4}{threeColorAss1.eps} = \ig{.4}{threeColorAss2.eps}\]

\end{defn}

\begin{defn} Let $\mc{F}_1$ be the functor from $\mc{DC}_1$ to $\mc{SC}_1$ specified as follows. On objects,
$\mc{F}_1(\ii) = B_{\ii}$. We define the functor on generating morphisms and extend it monoidally to all
morphisms.

In doing so, we \emph{always} use the isomorphism (\ref{useoften}) to identify $B_{\ii}$ with the $R$-bimodule
spanned by a choice of $d(\ii)+1$ polynomials. If one thinks of $B_{\ii}$ diagrammatically as $d$ vertical
lines, then a spanning element of $B_{\ii}$ is a choice of polynomial for each empty region delineated by the
lines (and polynomials with the appropriate symmetry may slide across the lines). We write the map explicitly
for a general element when it is easy enough to do so, or we write it for a spanning set as an $R$-bimodule
(see Remark \ref{spanningintro}).

For a line colored $i$:

\[
\begin{array}{cc}
\mathrm{Symbol} & \mc{F}_1 \\
\\
\igv{.3}{dot.eps} & \auptob{f \teni g}{fg}\\
\\
\ig{.3}{dot.eps} & \auptob{1}{\xi \teni 1 - 1 \teni \xp} \\
\\
\igv{.3}{merge.eps} & \begin{array}{ccc} \auptob{f \teni 1 \teni g}{0} & 
\quad & \auptob{f \teni \xi \teni g}{f \teni g} \end{array} \\
\\
\ig{.3}{merge.eps} & \auptob{f \teni g}{f \teni 1 \teni g}
\end{array}
\]

For lines colored $i$ and $j$ distant:

\[\begin{array}{cc} \igv{.5}{4valent.eps} & \auptob{f \teni 1 \tenj g}{f \tenj 1 \teni g} \end{array}\]

For a thin line colored $i$ and a thick line colored $i+1$:

\[
\begin{array}{cc}
  \igv{.4}{6valent.eps} & \begin{array}{cc} \auptob{1 \teni 1 \tenp 1 \teni 1}
 {1 \tenp 1 \teni 1 \tenp 1} & \auptob{1 \teni \xi \tenp 1 \teni 1}{(\xi + \xp) \tenp 1 
 \teni 1 \tenp 1 - 1 \tenp 1 \teni 1 \tenp \xqq} \end{array} \\
  \\
  \ig{.4}{6valent.eps} & \begin{array}{cc} \auptob{1 \tenp 1 \teni 1 \tenp 1}
 {1 \teni 1 \tenp 1 \teni 1} & \auptob{1 \tenp \xqq \teni 1 \tenp 1}
 {1 \teni 1 \tenp 1 \teni (\xp + \xqq) - \xi \teni 1 \tenp 1 \teni 1} \end{array}
\end{array}
\]

For any $1 \le i \le n+1$:

\[\begin{array}{cc} \ig{.4}{multxi.eps} & \auptob{1}{\xi} \end{array}\]
\end{defn}

\begin{claim} The above maps are $R$-bimodule maps. \end{claim}

\begin{proof} This is obviously true for EndDot, since the resulting map is no more than multiplication.
StartDot is sent precisely to the generator $\phi_i$ of $\Hom(R,B_i)$ discussed in
Section~\ref{subsec-sbimcat}. Split and Merge have already been seen as inclusion and projection maps in the
isomorphism $B_i \TenR B_i \cong B_i\{1\} \oplus B_i\{-1\}$, see Remark \ref{forcingintro}. For the 4-valent
vertex, the only polynomials which slide all the way across $B_i \TenR B_j$ or $B_j \TenR B_i$ are in
$R^{i,j}$, so that the map $f \teni 1 \tenj g \to f \tenj 1 \teni g$ is a bimodule map (that $f \teni 1 \tenj
g$ spans it was observed in Remark \ref{spanningintro}).

Only the 6-valent vertices remain to be checked. Consider the first of the two variants. The generating set
$\{1 \teni 1 \tenp 1 \teni 1, 1 \teni \xi \tenp 1 \teni 1\}$ as an $R$-bimodule was chosen because $\xi$ can be
slid freely between the second and third slots. We have defined the $R$-bimodule map on generators before
showing that the map is an $R$-bimodule map at all, which is akin to putting the cart before the horse. Let us
explicitly define the map on a $\Bbbk$ spanning set by the following algorithm: given $f \teni g \tenp h \teni
k \in B_i \TenR B_{i+1} \TenR B_i$, first we force $h$ to the right and slide the ``remainder" to the left,
that is $f \teni g \tenp h \teni k = f \teni g \tenp 1 \teni kP_i(h) + f \teni g\xi \tenp 1 \teni
k\partial_i(h)$; then we force the terms in the second slot to the left, yielding $fP_i(g) \teni 1 \tenp 1
\teni kP_i(h) + f \partial_i(g) \teni \xi \tenp 1 \teni kP_i(h) + fP_i(g\xi) \teni 1 \tenp 1 \teni
k\partial_i(h) + f \partial_i(g\xi) \teni \xi \tenp 1 \teni k\partial_i(h)$. Finally, each term can be
evaluated using the given definition of $\mc{F}_1$ on generators. This gives an explicit formula for the image
of $f \teni g \tenp h \teni k$, which we only need check is invariant under: sliding an element of $R^i$ from
$f$ to $g$, or from $h$ to $k$; sliding an element of $R^{i+1}$ from $g$ to $h$. Sliding elements of $R^i$ does
not pose a problem, since we defined the map by forcing $h$ to $k$ and $g$ to $f$, which fully respects such
slides. Checking invariance under slides from $g$ to $h$ is non-trivial. However, the bulk of the work is
encapsulated in the following discussion, which is useful for calculations in general.

By adding and subtracting $\xqq$, the image of $1 \teni \xi \tenp 1 \teni 1$ under the first 6-valent vertex (see above) can be
written more symmetrically as $(\xi+\xp+\xqq)(1 \tenp 1 \teni 1 \tenp 1) - \xqq \tenp 1 \teni 1 \tenp 1 - 1 \tenp 1 \teni 1
\tenp \xqq$. The first term is a polynomial symmetric in all the relevant variables and thus can be slid anywhere. In the other
two terms, $\xqq$ can not be slid freely under a line labelled $i+1$, so it is stuck in its respective position. In contrast,
$1 \teni \xp \tenp 1 \teni 1$ and $1 \teni 1 \tenp \xp \teni 1$ are not equal, since $\xp$ can not be slid over a line labelled
$i+1$, but the images of both these elements are easier to remember, and are shown below.

\begin{eqnarray*}
  \igv{.4}{6valent.eps} & \begin{array}{cc} \auptob{1 \teni \xp \tenp 1 \teni 1}{ 1 \tenp 1 \teni 1 \tenp \xqq} & \auptob{1 \teni 1 \tenp \xp \teni 1}{\xqq \tenp 1 \teni 1 \tenp 1} \end{array} \\
  \\
  \ig{.4}{6valent.eps} & \begin{array}{cc} \auptob{1 \tenp \xp \teni 1 \tenp 1}{ 1 \teni 1 \tenp 1 \teni \xi} & \auptob{1 \tenp 1 \teni \xp \tenp 1}{\xi \teni 1 \tenp 1 \teni 1} \end{array}
\end{eqnarray*}

The way to remember these formulae is that the variable which can't be slid is sent to the variable which can't
be slid, from the middle on one side to the exterior on the other. It is easy to see that these calculations
were done according to the algorithm above, forcing $\xp$ to the outside first and then evaluating on the
leftover $\xi$.

Now we do the consistency check for the simplest cases. We wish to show that $1 \teni (\xp + \xqq) \tenp 1
\teni 1$ and $1 \teni 1 \tenp (\xp + \xqq) \teni 1$ are sent to the same element by the algorithm. However,
this is rather easy, for in both cases, the $\xqq$ term slides immediately to the exterior, and the $\xp$ term
is evaluated as above, so both are sent to $\xqq \tenp 1 \teni 1 \tenp 1 + 1 \tenp 1 \teni 1 \tenp \xqq$.
Similarly, both $1 \teni \xp\xqq \tenp 1 \teni 1$ and $1 \teni 1 \tenp \xp\xqq \teni 1$ are sent to $\xqq \tenp
1 \teni 1 \tenp \xqq$. The general case is not significantly more difficult than this; we leave the details to
the reader.
\end{proof}

\begin{prop} \label{f1isafunctor} The functor $\mc{F}_1$ is well-defined. That is, the relations of $\mc{DC}_1$
hold between morphisms of $R$-bimodules in $\mc{SC}_1$. \end{prop}

Checking that the relations hold is a series of simple but tedious calculations that is postponed until
Section~\ref{subsec-f1}. We assume this result henceforth. In addition, we note once and for all that (as one
can easily check) all relations in the definition of $\mc{DC}_1$ are homogeneous, and $\mc{F}_1$ preserves the
degree of the generators.

\begin{remark} {\bf Addendum} It may strike the reader as unusual that the definition of the functor seems
lopsided, while the definition of $\mc{DC}_1$ is invariant under right-left reflection, or under reversing the
order of the colors $n,n-1,\ldots,1$. For instance, $\mc{F}_1$ applied to StartDot yields the element $\xi
\teni 1 - 1 \teni \xp$, which is actually invariant under right-left reflection but not immediately so. Had
this element been rewritten as $\frac{\xi - \xp}{2} \teni 1 + 1 \teni \frac{\xi - \xp}{2}$, perhaps the
calculations would be more natural despite having more fractions. A worse offender is the forcing rule $1 \teni
g = (g - \partial_i(g)\xi) \teni 1 + \partial_i(g) \teni \xi$, which should be rewritten $1 \teni g = \frac{g +
s_i(g)}{2} \teni 1 + \partial_i(g) \teni \frac{\xi - \xp}{2}$. In general, the elements $\xi - \xp$ are more
natural than $\xi$ or $\xp$, coming from the reflection representation rather than the standard representation
of $S_{n+1}$ (see Remark \ref{dumbchange} and Section \ref{subsec-quotient}).

Previous versions of this paper, however, used the style above, and so we feel compelled to stick with it to
maintain consistency. Also, checking that $\mc{F}_1$ is a functor may be easier with the current notation.
\end{remark}

\section{Consequences}
\label{sec-conseq}

%
\subsection{Terminology}
\label{subsec-terminology}
%

We will spend the next few sections classifying the homomorphisms in $\mc{DC}_1$. For many of the results,
proofs will be postponed until Section \ref{subsec-graphproof}.

We will we using the fact, extensively discussed in the previous sections, that a morphism can be viewed
unambiguously as an isotopy class of graphs with polynomials in the regions (or rather, a linear combination of
these). Henceforth, the term \emph{graph} only refers to colored finite graphs with boundary (embedded in the
planar strip) which can be constructed out of univalent, trivalent, 4-valent, and 6-valent vertices as above.
Remember that these graphs do have edges which run to the boundary, which we call \emph{boundary lines}, and
may have edges which meet neither the boundary nor any vertex, and thus must necessarily form a circle. We say
a graph \emph{has a boundary} if it has at least one boundary line. A graph divides the planar strip into
regions, and there are two distinguished regions: the \emph{lefthand} and \emph{righthand} regions, which
contain $-\infty$ and $\infty$ respectively.

We call a \emph{boundary dot} any connected component of a graph which consists entirely of an edge starting at
the boundary and ending in a dot. We call a \emph{double dot} any connected component of a graph which consists
entirely of an edge with a dot on both ends. Cutting an edge in a diagram and replacing it with two dots we
call \emph{breaking} the edge (see, for instance, relation (\ref{dotSpaceDot})).

Given a set $S$ of graphs and a morphism $\phi$ in $\mc{DC}_1$, we say that $S$ \emph{underlies} $\phi$ if
$\phi$ can be written as a linear combination of morphisms, each of which is given by a graph $\Gamma \in S$
with polynomials in regions. We say that a graph $\Gamma$ \emph{reduces} to $S$ if $S$ underlies every morphism
that $\Gamma$ underlies. Clearly reduction is transitive, in that if $\Gamma$ reduces to $S$, and every graph
in $S$ reduces to $S^\prime$, then $\Gamma$ reduces to $S^\prime$. Our goal will be to find a nice set of
graphs to which all other graphs reduce. We will do this by finding \emph{reduction moves}, which are local
moves on graphs, sending a graph to a set of graphs to which it reduces.

\begin{example} The relation (\ref{twoLines}) implies that $\ig{.2}{twoLines.eps}$ reduces to
$\ig{.2}{Xdiagram.eps}$. In other words, $\ig{.2}{Xdiagram.eps}$ underlies both the terms on the right side of
(\ref{twoLines}). This can be applied as a local reduction move within any graph. \end{example}

Let $T$ be a subset of $\{1,\ldots,n\}$. The $T$-\emph{graph} of a graph will be the subgraph consisting of all
edges colored $i$ for $i\in T$. Some 6-valent vertices in the original graph may become trivalent vertices in
the $T$-graph. Similarly, some 4-valent vertices in the original graph may become 2-valent vertices in the
$T$-graph, which we ignore, connecting the incoming edges into a single edge. The $T$-graph is itself a graph
by our above definition. Most often we will just consider the $i$-graph for a single color (i.e. $T=\{i\}$).
Typically, our reduction moves will be designed to simplify the $i$-graph for a particular $i$, allowing us to
simplify the graph one color at a time.

\begin{remark} {\bf Addendum} The rest of this paper will have numerous calculations, but they will mostly be
calculations with the underlying graphs, not keeping track of polynomials, so they do not reflect how morphisms
actually behave in $\mc{DC}_1$. For lots of examples of computations in the graphical calculus, see \cite{EKr}.
\end{remark}

%
\subsection{One color reductions}
\label{subsec-onecolorred}
%

In this section, we assume all graphs consist of a single color $i$.

\begin{defn} \label{basicmoves} Consider the following ``moves", or transformations. They take a subdiagram
looking like Start, and replace that subdiagram with Finish. We call these the \emph{basic moves}.

$
\begin{array}{cc}
\begin{array}{ccc}
\mathrm{Move} & \mathrm{Start} & \mathrm{Finish}\\
\\
\mathrm{Associativity} & \igrotCW{.3}{Xdiagram.eps} & \ig{.3}{Xdiagram.eps}\\
\\
\mathrm{Dot Contraction} & \ig{.3}{dotSplit.eps} & \ig{.3}{cap.eps}\\
\\
\mathrm{Double Dot Removal} & \ig{.3}{doubleDot.eps} & \quad
\end{array} &
\begin{array}{ccc}
\mathrm{Move} & \mathrm{Start} & \mathrm{Finish}\\
\\
\mathrm{Needle} & \ig{.3}{needle2.eps} & \igv{.4}{dot.eps}\\
\\
\mathrm{Dot Extension}  & \ig{.3}{cap.eps} & \ig{.3}{dotSplit.eps}
\\
\mathrm{Connecting} & \ig{.3}{twoLines.eps} & \ig{.3}{Xdiagram.eps}\\
\end{array}
\end{array}
$
\end{defn}

Remember, these are moves on graphs, not graphs with polynomials. Note that the needle move, by adding a dot on
the bottom, yields a reduction from the circle to a double dot. The only moves which change the connectivity of
a graph are double dot removal, which deletes a connected component, and the connecting move, which has the
potential to link two components into one.

\begin{claim} All of these moves are reduction moves in $\mc{DC}_1$. \end{claim}

\begin{proof} The associativity move follows from (\ref{eqn-associativity}). That is, even if there are
polynomials in the regions of the graph, the relation (\ref{eqn-associativity}) can still be applied. These
polynomials, being in external regions, do not interfere with the application of relations. Similarly, dot
contraction/extension follow from (\ref{unit}), dot removal follows from (\ref{doubleDot}), and the
connecting move follows from (\ref{twoLines}).

The needle move remains. Suppose we have an arbitrary polynomial $f$ in the eye of the needle. We may use
(\ref{holepoly}), generalizing (\ref{needleWithEye}), to replace the diagram with a dot accompanied by
$\partial_i(f)$. \end{proof}

The following example of reduction should be familiar.

\begin{example} 
\begin{align*} \ig{.4}{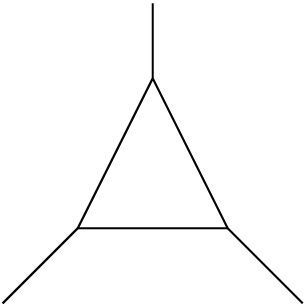}& \rightarrow \ig{.4}{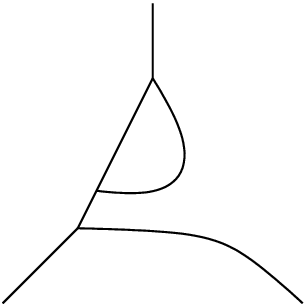} \rightarrow 
\ig{.4}{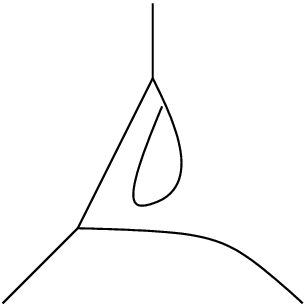}\\
& \rightarrow \ig{.4}{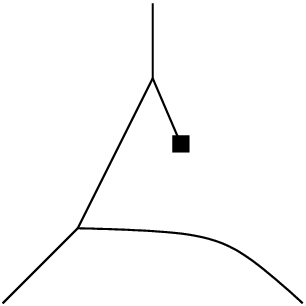} \rightarrow \ig{.4}{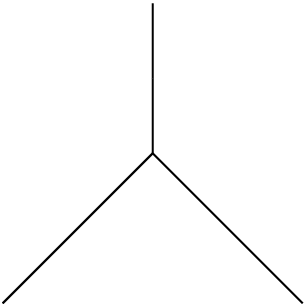} \end{align*}
In order, this is done with associativity, associativity, needle, and dot contraction moves. \label{reducecycleex}
\end{example}

\begin{example}  \begin{align*}\ig{.35}{polygonI3.eps} \rightarrow
\ig{.35}{polygonI2.eps} \end{align*} This is meant to indicate an arbitrary length cycle of this form, and
reduction is done with associativity, needle, and dot contraction moves. \label{reducecycleexgen} \end{example}

\begin{defn} A \emph{simple tree} $T$ with $m$ boundary lines is a connected one-color graph with boundary,
whose form depends on $m$: \begin{enumerate} \item If $m\ge2$ then $T$ is a trivalent tree with $m-2$ vertices
connecting all the boundary lines. Note that any two such trees are equivalent under the associativity move.
\item If $m=1$ then $T$ is a single boundary dot. \item If $m=0$ then $T$ is the empty graph. \end{enumerate}
\end{defn}

\begin{defn} A \emph{cycle} in a one-color graph is either a circle or a path from a vertex to itself which
does not repeat any edges. Any cycle splits the plane into two parts, the inside and outside of the cycle. A
cycle is \emph{minimal} if the inside of the cycle consists of a single region. \end{defn}

By counting vertices, it is clear that any connected purely trivalent graph with no cycles is a simple tree.
Any graph with a cycle has a minimal cycle.

We shall now give a precise inductive algorithm to reduce a graph to a disjoint union of simple trees, by
reducing minimal cycles.

\begin{prop} \label{reducecycle} Consider a minimal cycle in a one-color graph $\Gamma$. Using the
associativity, needle, dot removal, and dot contraction/extension moves, we may reduce a neighborhood of the
cycle (including the inside region) to a simple tree (See (\ref{figure})). \end{prop}

\begin{equation} \label{figure} \ig{.4}{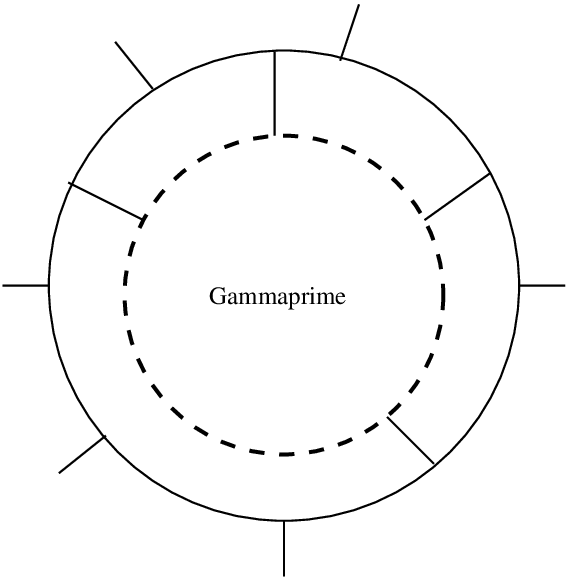} \quad \quad \to \quad \quad \ig{.4}{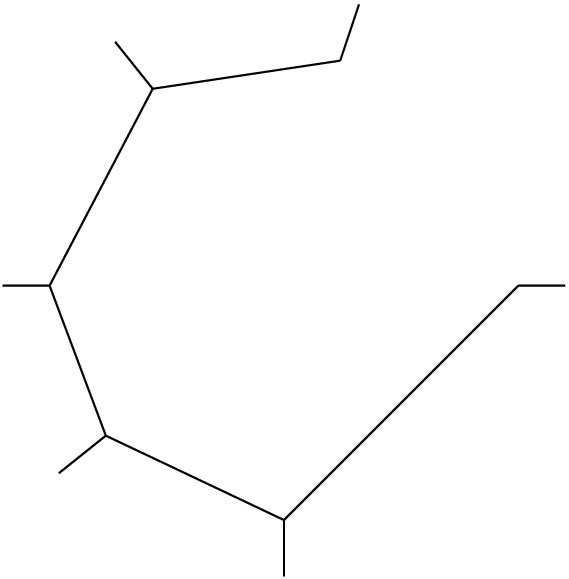}
\end{equation}

\begin{prop} \label{prop-onecolor} Using the associativity, needle, dot removal and dot contraction/extension
moves, one can reduce any one-color graph $\Gamma$ to a disjoint union of simple trees. During this process,
each component with no boundary lines will be replaced with the empty graph, and after this is done, no further
connected components are created or destroyed or merged. \end{prop}

We prove both propositions together, in several steps.

\begin{proof}[Proof of Proposition \ref{prop-onecolor} for a graph with no cycles] Suppose there are no cycles
in $\Gamma$. If there is a dot, the edge coming from that dot must connect to either the boundary, another dot,
or a trivalent vertex. Our simplification algorithm is: \begin{itemize} \item Step 1: Remove any dot connected
to a trivalent vertex, using dot contraction. Repeat Step 1 until no such dots remain. This does not alter the
connected components. \item Step 2: Replace any double dot with the empty set, using double dot removal.
Because Step 1 did not alter the connected components, this could only be applied to double dots which arose
from components which had no boundary lines. \end{itemize} Boundary dots are in their own connected component.
Any other connected component is purely trivalent and has no cycles, so it must be a simple tree. \end{proof}

\begin{proof}[Proof of Proposition \ref{reducecycle}, assuming Proposition \ref{prop-onecolor} for graphs with
no cycles] Let $\gamma$ denote a minimal cycle of $\Gamma$. Consider a neighborhood of $\gamma$ in $\Gamma$,
and let $\Gamma'$ be the subgraph consisting of the interior of the cycle, as in (\ref{figure}). Let us call the
boundary lines of $\Gamma'$ \emph{spokes}, since they run into $\gamma$ from the inside, like spokes hitting
the wheel of a bicycle. Now $\Gamma'$ may not have any cycles, or else $\gamma$ would not be a minimal cycle.
Moreover, the spokes of $\Gamma'$ must be in distinct connected components of $\Gamma'$, or else they would
create additional regions and $\gamma$ would not be a minimal cycle. Our simplification algorithm is:
\begin{itemize} \item Step 1: Apply Proposition \ref{prop-onecolor} to $\Gamma'$, replacing $\Gamma'$ with a
disjoint union of boundary dots, one for each spoke. \item Step 2: Use dot contraction to remove all the
spokes. We are now in the situation of Example \ref{reducecycleexgen}. \item Step 3: Apply associativity as in
Example \ref{reducecycleex}, reducing the length of $\gamma$ by one. Repeat until $\gamma$ is a needle or a
circle. \item Step 4: If $\gamma$ is a needle, apply the needle move to replace it with a dot. If associativity
moves were performed in Step 3, use dot contraction to contract this dot into one of the trivalent vertices, as
in Example \ref{reducecycleex}. \item Step 5: If $\gamma$ is a circle, apply dot extension to replace it with a
needle attached to a dot. Then apply needle reduction to obtain a double dot, and double dot removal to obtain
the empty graph. \end{itemize} It is a simple observation that the result of this procedure is a simple tree,
and that the only alteration of connected components which occurred was the removal of components which had no
boundary lines to begin with. \end{proof}

\begin{proof}[Proof of Proposition \ref{prop-onecolor} in the general case] Suppose we have an arbitrary graph
$\Gamma$. Our simplification algorithm is: \begin{itemize} \item Step 1: If $\Gamma$ has a cycle, apply
Proposition \ref{reducecycle} to replace its neighboorhood with a simple tree. Repeat this process until
$\Gamma$ has no cycles. \item Step 2: Apply the procedure for the case of no cycles above. \end{itemize} Note
that Step 1 will terminate, which can be shown by induction on the number of internal regions (regions which do
not meet the boundary of the planar strip). Each application of Proposition \ref{reducecycle} reduces the
number of internal regions by 1. \end{proof}

\begin{cor} Using the connecting move in addition, we can reduce the graph to a single simple tree. \end{cor}

\begin{proof} It is an easy observation that when one uses the connecting move on a simple tree with $m$
boundary lines and a simple tree with $m'$ boundary lines, one gets a simple tree with $m+m'$ boundary lines,
after possibly removing extraneous dots if either $m$ or $m'$ equals $1$. \end{proof}

\begin{remark} There are two useful sets of one-color graphs with $m$ boundary lines, to which all others
reduce. The first set just contains the simple tree with $m$ boundary lines, and the latter is the collection
of all disjoint unions of simple trees whose number of boundary lines add up to $m$. The former is useful
because there is a single graph, so we have fewer cases to deal with. The latter is useful because it doesn't
require the connecting move.

More importantly, these sets behave differently when we introduce polynomials into the equation. Let us assume
that all $m$ boundary lines are on the top boundary, so that we are looking at a morphism in
$\Hom(\emptyset,\ii)$ where $\ii$ is $iiiiiiiii$ ($m$ times). The following statements will not be used in this
paper, and can be more easily proven after the calculation of Hom spaces.

\begin{claim} Consider diagrams which are a simple tree, with an arbitrary monomial in the lefthand region, and
either $1$ or $\xi$ in each other region. This is a basis for $\Hom(\emptyset,\ii)$ over $\Bbbk$. \end{claim}

\begin{claim} Consider diagrams which are disjoint unions of simple trees, with an arbitrary monomial in the
lefthand region, and no other polynomials. These are a spanning set for $\Hom(\emptyset,\ii)$ over $\Bbbk$.
\end{claim}

The second claim is easy to see, given Proposition \ref{prop-onecolor}. Given a disjoint union of simple trees,
with arbitrary polynomials, we may use relation (\ref{dotSpaceDot}) and the polynomial slides to force all the
polynomials to the left, at the cost of potentially breaking some lines. This breakage is not a problem, since
one can reduce again to a simple tree without adding more polynomials, using (\ref{eqn-associativity}) and
(\ref{unit}). We do not get a basis this way: consider the three different ways to break a line in a
trivalent vertex diagram; there is a linear dependence relation between these diagrams and the trivalent vertex
with a polynomial in the lefthand region.

Relations (\ref{twoLines}) and (\ref{eq-slide1}) essentially allow us to get from the second spanning set to
the first, showing that the first is at least a spanning set. That it is a basis is immediate from counting the
graded dimension of the Hom space, one we prove that the dimension of Hom spaces in $\mc{DC}_1$ conforms with a
certain semilinear form. \end{remark}

The connecting move is less important than the others in the proofs, and was introduced primarily to make these
remarks. It can generally be ignored below.

%
\subsection{Broken one color reductions}
\label{subsec-broken}
%

The reductions of the previous section do apply, as stated, to any one color graph. However, we would like to
apply these moves to the $i$-colored graph of a multicolor graph, where the moves above do not extend trivially
to reduction moves in $\mc{DC}_1$. In this section, we quickly generalize the results of the previous section
to a weaker set of moves.

\begin{defn} \label{weakbasicmoves} Consider the following reductions for one-color graphs, which take the
graph on the left and replace it with the set of graphs on the right.

$
\begin{array}{ccc}
\mathrm{Move} & \mathrm{Start} & \mathrm{Finish}\\
\\
\mathrm{Weak Associativity} & \igrotCW{.3}{Xdiagram.eps} \quad \quad & \begin{array}{ccccc} \ig{.3}{Xdiagram.eps} & \ig{.2}{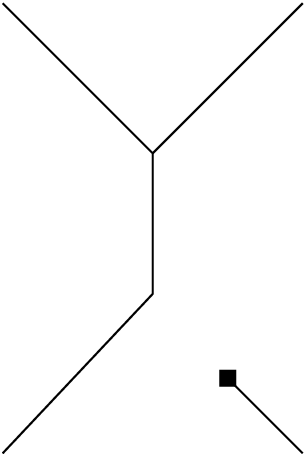} & \igv{.2}{twoLinesAlt2.eps} & \igh{.2}{twoLinesAlt2.eps} & \ighv{.2}{twoLinesAlt2.eps} \end{array}\\
\\
\mathrm{Weak Dot Contraction} & \ig{.3}{dotSplit.eps} & \begin{array}{ccc}\ig{.3}{cap.eps} & \ & \igv{.2}{dot.eps} \quad \igv{.2}{dot.eps} \end{array}
\end{array}
$

We call the moves above (together with the basic moves that have no weak analog) the \emph{broken} or
\emph{weak basic moves}. \end{defn}

These moves behave like the basic moves of the same name, except that they may also replace the original
diagram with a broken version of itself, that is, a version with some edges broken. To distinguish the original
basic moves, we may call them the \emph{strict basic moves}.

What is important is that we have analogs of Propositions \ref{reducecycle} and \ref{prop-onecolor}, despite
only being able to use weak moves.

\begin{prop} \label{weakreducecycle} Consider a minimal cycle in a one-color graph $\Gamma$. Using the weak
associativity, needle, dot removal, weak dot contraction, and dot extension moves, we may reduce a
neighborhood of the cycle (including the inside region) to a disjoint union of simple trees. \end{prop}

\begin{prop} \label{prop-weakonecolor} Using the weak associativity, needle, dot removal, weak dot
contraction, and dot extension moves, one can reduce any one-color graph $\Gamma$ to a disjoint union of simple
trees. \end{prop}

Of course, two distinct simple trees are no longer equivalent under the weak associativity move, but this is
really irrelevant for us.

\begin{proof} Breaking a line will never create a cycle or increase the number of trivalent vertices. Because
of this, the proofs of the previous section go through almost verbatim (ignoring any statements about connected
component, and occassionally replacing ``a simple tree" with ``a disjoint union of simple trees"). The only
significant alterations that need to be made come in the proof of Proposition \ref{reducecycle}. In Step 4 or
Step 5 one may need to remove an additional double dot. In Step 2 or Step 3, weak dot contraction and weak
associativity have multiple outcomes, but each outcome that does not agree with strict dot contraction or
strict associativity will have broken the cycle already, allowing us to complete the proof using the no-cycle
algorithm of Proposition \ref{prop-onecolor}.

Alternatively, one could also prove these statements by induction on the number of trivalent vertices. Each
weak move is equivalent to a strong move modulo diagrams with fewer trivalent vertices. The only part of the
proof that ever created additional trivalent vertices was the single use of dot extension in Step 5 of the
proof of Proposition \ref{reducecycle}. It is easy to see how Step 5 does not actually cause a problem,
however, since after dot extension is applied, the needle move and double dot removal will do the trick in
the same way regardless. \end{proof}

{\bf Addendum} The overall proof using weak one-color moves is slightly different than the treatment in
previous versions of this paper, but it is cleaner and more straight-forward.

%
\subsection{$i$-colored moves}
\label{subsec-coloredmoves}
%

Now we list the moves which allow us to simplify multicolor graphs.

\begin{defn} \label{icoloredmoves} Consider a graph $\Gamma$ whose $i$-graph looks like one of the pictures in
the start column of Definition \ref{basicmoves}. Let $S$ be the set of all graphs whose $i$-graph looks like
the corresponding picture in the finish column. Let $W$ be the set of all graphs whose $i$-graph looks like any
of the corresponding pictures in the finish column of definition \ref{weakbasicmoves}. The \emph{strict}
$i$-\emph{colored move} replaces $\Gamma$ with the set $S$. The \emph{weak} $i$-\emph{colored move} replaces
$\Gamma$ with the set $W$. \end{defn}

For instance, strict $i$-colored associativity will replace any graph $\Gamma$ whose $i$-graph is
$\igrotCW{.2}{Xdiagram.eps}$ with the set $S$ of graphs whose $i$-graph is $\ig{.2}{Xdiagram.eps}$. This set
$S$ is enormous, for other colors can interfere, and the $j$-graph for some other $j$ can be arbitrarily
complicated. The $i$-colored vertices, seemingly trivalent, could come from 6-valent vertices in $\Gamma$. In
general, an $i$-colored move will behave nicely on the $i$-graph, but may significantly complicate the full
graph.

\begin{prop} \label{prop-imoves} The weak $i$-colored basic moves are reduction moves in $\mc{DC}_1$, so long
as they are applied to graphs which do not contain either the color $i-1$ or $i+1$. \end{prop}

A color $i$ will be called \emph{extremal} for a graph $\Gamma$ if it appears in $\Gamma$ but either $i-1$ or
$i+1$ does not appear. Clearly, any non-empty graph will have an extremal color, such as the minimal color
present.

The proof of this proposition is found in Section~\ref{subsec-graphproof}, as well as more precise details on
what can be done. The power of the proposition can be seen immediately:

\begin{cor} \label{cor-empty} Any graph $\Gamma$ without boundary lines can be reduced to the empty graph.
\end{cor}

\begin{proof} We induct on the set of colors present in the graph $\Gamma$. If no colors are present, then
$\Gamma$ is the empty graph and we are done. Else, choose an extremal color $i$. By Proposition
\ref{prop-imoves} we may apply the weak $i$-colored basic moves and use Proposition \ref{prop-weakonecolor} to
replace every connected component of the $i$-graph with a disjoint union of simple trees with no boundary
lines. Since a simple tree with no boundary lines is the empty set, $\Gamma$ reduces to a set of graphs which
do not include the color $i$. By induction, $\Gamma$ now reduces to the empty graph. \end{proof}

One can apply a similar procedure to a graph $\Gamma$ with boundary lines. Choose an extremal color $i$, and
reduce the $i$-graph to a disjoint union of simple trees. Then, within each region delimited by the $i$-graph,
the colors $i+1$ and $i-1$ are now extremal, and we can reduce those. One can repeat this procedure, however,
it will not produce a very simple graph in all cases. If the color $i$ has at least 3 boundary lines, the
$i$-graph may have trivalent vertices, and the graph $\Gamma$ itself may have 6-valent vertices in their place.
These 6-valent vertices will produce more $i+1$ or $i-1$ colored boundary lines inside the regions delimited by
the $i$-graph.  Nonetheless, we have the following simple case.

\begin{cor} \label{cor-oneofeach} Any graph whose boundary has at most one line of each color can be reduced to
a disjoint union of boundary dots. \end{cor}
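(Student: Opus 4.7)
The plan is to mimic the inductive strategy in the proof of Corollary~\ref{cor-empty}, but now keeping track of what happens at the boundary. I would induct primarily on the number of colors appearing in $\Gamma$ (with a secondary induction on the simplicity of the graph to absorb the simpler graphs produced by non-strict reduction moves). The base case of zero colors is the empty graph, which vacuously is a disjoint union of boundary dots.

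For the inductive step, let $i$ be the lowest color appearing in $\Gamma$ (either internally or on the boundary). Since $i-1$ is absent, Proposition~\ref{prop-imoves} allows one to apply the $i$-colored basic moves to $\Gamma$. Using these moves as in Proposition~\ref{prop-onecolor}, I would reduce the $i$-graph of $\Gamma$ to a simple forest, modulo graphs strictly simpler than $\Gamma$ (which are handled by the secondary induction). By hypothesis the $i$-graph has at most one boundary edge, so its reduction is either empty (when $i$ does not appear on the boundary) or consists of a single boundary dot (when it does); in particular, all 6-valent vertices involving $i$ are eliminated, since such vertices would appear as internal trivalent vertices in the $i$-graph. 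The outcome is a disjoint union of at most one boundary dot of color $i$ together with a graph $\Gamma'$ which contains no edges of color $i$ at all. The boundary of $\Gamma'$ still has at most one edge of each remaining color, and $\Gamma'$ uses strictly fewer colors, so by the inductive hypothesis $\Gamma'$ reduces to a disjoint union of boundary dots. Reassembling the pieces gives the desired reduction of $\Gamma$.

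The main delicate point is ensuring that reducing the $i$-graph really does eliminate every 6-valent vertex involving $i$: one must be confident that when the $i$-graph underlying $\Gamma$ is replaced by a simple forest with $\le 1$ boundary edge, no stray internal trivalent vertices of the $i$-graph survive, for such a vertex would correspond to a 6-valent vertex in $\Gamma$ still tying the color-$i$ strand into the rest of the diagram. This is precisely the content of the classification in Proposition~\ref{prop-onecolor}: a simple forest with a single boundary line is a boundary dot (no internal vertices), and the empty forest has none either, so after reduction the color-$i$ strands are truly disconnected from the higher-color part of $\Gamma$, licensing the clean split into a boundary dot plus $\Gamma'$. Granting this, the rest of the argument is a straightforward iteration of the color-by-color elimination used in Corollary~\ref{cor-empty}.
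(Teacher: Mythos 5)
Your overall strategy matches the paper's proof exactly: take the lowest color $i$ present, invoke Proposition~\ref{prop-imoves} (valid since $i-1$ is absent) and Proposition~\ref{prop-onecolor} to reduce the $i$-graph to a simple forest, observe that the hypothesis on the boundary forces this forest to be empty or a single boundary dot, and then recurse on the remaining colors. You also correctly identify that the key point is the disappearance of all internal trivalent vertices of the $i$-graph, which rules out any surviving 6-valent vertex involving $i$.

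However, there is a small but genuine gap: after the reduction, you claim the outcome is a ``disjoint union of at most one boundary dot of color $i$ together with a graph $\Gamma'$,'' and that ``the color-$i$ strands are truly disconnected from the higher-color part of $\Gamma$.'' This does not follow. The $i$-graph only records trivalent/univalent structure; a 4-valent vertex where $i$ crosses a distant color $j$ is invisible to it (it appears as an ordinary interior point of an $i$-edge). So even when the $i$-graph is reduced to a single boundary dot, the corresponding $i$-colored edge of $\Gamma$ can still be threaded through the rest of the diagram via 4-valent crossings, and $\Gamma$ is not yet a literal disjoint union. The missing step, which the paper supplies explicitly, is to slide the dot under these 4-valent vertices using relation~(\ref{eqn-ijijDot}) (together with the R2 move) until the $i$-colored component is an honest boundary dot sitting in its own region; only then can you cleanly peel it off and apply the inductive hypothesis to the remainder. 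With that one sentence added, your argument is complete and coincides with the paper's.
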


\begin{proof} We know we can reduce the $i$-graph, for $i$ an extremal color, to a disjoint union of simple
trees. A simple tree with at most one boundary line is either the empty set or a boundary dot. Therefore the
$i$-graph is now either the empty set or a boundary dot, depending on whether or not $i$ appears in the
boundary. The dot need not be a boundary dot in the entire graph $\Gamma$, but it can encounter only 4-valent
vertices en route to the boundary. Since a dot can be slid under a 4-valent vertex by relation
(\ref{eqn-ijijDot}), we may turn the dot into a boundary dot (its own connected component). The remaining
connected components form a subgraph (also viewable in the planar strip) without the color $i$. Induction now
concludes the proof.\end{proof}

%
\subsection{$\mc{F}_1$ is fully faithful}
\label{subsec-functor}
%

In this section, modulo the proofs of previous sections which were delayed until Section \ref{sec-proofs}, we
prove our main theorem.

\begin{thm} \label{mainthm} The functor $\mc{F}_1$ from $\mc{DC}_1$ to $\mc{SC}_1$ is 
an equivalence of $\Bbbk$-linear monoidal categories with Hom spaces enriched in $\RmolfZR$. \end{thm}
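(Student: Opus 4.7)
The plan is to follow the blueprint laid out in Section~\ref{subsec-moral} and mirror the proof strategy of Proposition~\ref{prop-full}. Essential surjectivity is automatic from the definition $\mc{F}_1(\ii) = B_{\ii}$. For fullness, I would verify that every generating morphism enumerated in Proposition~\ref{prop-full} lies in the image of $\mc{F}_1$: the StartDot realizes the generator $\phi_i \colon R \to B_i$; the two-lines relation~(\ref{twoLines}) together with its dual under biadjunction yields the idempotent decomposition of $B_i \TenR B_i$; the 4-valent vertex gives the distant-color isomorphism underlying~(\ref{eqn-bibj}); the 6-valent vertex combined with~(\ref{eqn-threeLines}) realizes the braid-type decomposition~(\ref{eqn-ipibimod}); and the cup/cap maps, satisfying the isotopy relation~(\ref{biadjoint}), supply the unit and counit of self-adjunction. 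With all the generators of Proposition~\ref{prop-full} present, fullness of $\mc{F}_1$ follows immediately.

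For faithfulness, the strategy is to show $\Hom_{\mc{DC}_1}(\ii, \jj)$ is a free graded left $R$-module whose graded rank agrees with that of $\Hom_{\mc{SC}_1}(B_{\ii}, B_{\jj})$; since $\mc{F}_1$ is a surjective $R$-linear map between free graded $R$-modules of equal rank, it must then be an isomorphism. To compute these ranks without already knowing faithfulness, I would define a $\Ztt$-valued pairing on the free algebra on $b_1, \ldots, b_n$ by $(b_{\ii}, b_{\jj}) := \grdrk \Hom_{\mc{DC}_1}(\ii, \jj)$ and verify that it descends to the Hecke algebra and satisfies the three properties of Section~\ref{subsec-hecke}, so that Remark~\ref{unicity} forces it to coincide with the standard Hecke form. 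Descent requires exhibiting diagrammatic isomorphisms realizing~(\ref{eqn-bisq})--(\ref{eqn-bibpbi}), which are the very decompositions already used for fullness. Sesquilinearity is immediate from the $\RmolfZR$-enrichment via placement of boxes in the left/right boundary regions, while self-adjointness of $b_i$ is precisely the diagrammatic biadjunction~(\ref{biadjoint})--(\ref{twistDot2}), giving the twist isomorphisms $\Hom_{\mc{DC}_1}(M \otimes i, N) \cong \Hom_{\mc{DC}_1}(M, N \otimes i)$.

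The main obstacle, and the technical heart of the proof, is the normalization condition: showing that for an increasing sequence $\ii = i_1 \ldots i_d$ the space $\Hom_{\mc{DC}_1}(\emptyset, \ii)$ is a free left $R$-module of rank one generated in degree $d$. To approach this, I would combine the one-color Frobenius structure with the needle relations~(\ref{dotSpaceDot})--(\ref{needle}) to reduce each monochromatic subgraph to a planar forest whose regions carry polynomials, then use the R2/R3 moves~(\ref{eqn-R2move})--(\ref{eqn-R3move}) and the overlap associativity relations~(\ref{eqn-ipipipAss}),~(\ref{eqn-threeColorAss}) to normalize the interactions between different colors. The expected upshot is that any morphism $\emptyset \to \ii$ is, modulo relations, an $R$-multiple of the iterated $\phi_{i_1} \otimes \cdots \otimes \phi_{i_d}$ style diagram, matching the $\mc{SC}_1$ computation. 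More generally, reducing arbitrary diagrams between $\ii$ and $\jj$ to a canonical spanning set should yield an upper bound on $\grdrk \Hom_{\mc{DC}_1}(\ii, \jj)$, while fullness supplies the matching lower bound; equality and freeness then follow simultaneously. Keeping this graph-reduction algorithm confluent across all multi-color relations—especially the triple overlap associativity~(\ref{eqn-threeColorAss})—is where the delicacy lies, but once accomplished, the monoidal structure and $\RmolfZR$-enrichment are preserved by $\mc{F}_1$ by construction, completing the proof.
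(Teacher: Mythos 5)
Your skeleton matches the paper's: establish fullness via Proposition~\ref{prop-full}, then get faithfulness by defining the semilinear form $(b_{\ii},b_{\jj}) := \grdrk \Hom_{\mc{DC}_1}(\ii,\jj)$, checking it descends to $\mc{H}$, is self-adjoint, and normalizes correctly on increasing monomials, so that Remark~\ref{unicity} forces agreement with the form computed from $\mc{SC}_1$. You also correctly identify $\Hom_{\mc{DC}_1}(\emptyset,\ii)$ for $\ii$ increasing as the load-bearing calculation. Up to that point the proposal is faithful to the paper.

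The gap is in how you propose to get \emph{freeness} of $\Hom_{\mc{DC}_1}(\emptyset,\ii)$, and more broadly in where you locate the difficulty. You say the delicacy lies in keeping the graph-reduction algorithm \emph{confluent} across the multi-color relations. Confluence is never needed. What the reductions establish is only that the single boundary-dot diagram \emph{spans} $\Hom_{\mc{DC}_1}(\emptyset,\ii)$ over $R$, i.e.\ that the Hom space is a quotient of $R\{d\}$. To rule out hidden relations you do not need unique normal forms: you use the already-constructed functor $\mc{F}_1$ itself, which surjects this quotient onto the free module $\Hom_{\mc{SC}_1}(R,B_{\ii}) \cong R\{d\}$, forcing the quotient to be trivial. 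This is exactly how Corollary~\ref{cor-grrnk} proceeds. Multiple reduction paths can lead to genuinely different representatives; that costs nothing. The real delicacy in the reduction step (Proposition~\ref{prop-imoves} and its proof) is elsewhere: the $i$-colored moves can fail when both $i-1$ and $i+1$ are present (see the obstruction diagram at the end of Section~\ref{subsec-graphproof}), so one must reduce colors in order from the extremes inward, one region at a time. Your description of "reduce each monochromatic subgraph, then normalize the interactions" glosses over this ordering constraint, which is where the multi-page combinatorial work actually lives.

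Two smaller points. First, your suggestion to reduce \emph{arbitrary} diagrams in $\Hom(\ii,\jj)$ to a canonical spanning set is unnecessary and would require substantially more combinatorics than the paper carries out; the unicity argument deliberately sidesteps this by only requiring the increasing case, with the rest following from the descent of the Hecke relations (which you correctly enumerate: two-lines, R2, and the 6-valent/three-lines idempotent decomposition). Second, the proposal presupposes that $\mc{F}_1$ is a well-defined functor; verifying the 28 independent relations in $\mc{SC}_1$ is a nontrivial chunk of the proof (Section~\ref{subsec-f1}) that deserves explicit acknowledgment rather than being folded into "the generators lie in the image."
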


We know $\mc{F}_1$ is a functor by Proposition \ref{f1isafunctor}, and inspection of the objects in both
categories shows immediately that it is essentially surjective. To show $\mc{F}_1$ is full, we use Proposition
\ref{prop-full}, which motivates the next few statements.

\begin{cor} \label{cor-biadjoint} For any index $i$, the object $i$ in $\mc{DC}_1$ is self-biadjoint. This
means that for any sequences $\jj$ and $\kk$, there are natural isomorphisms $\Hom_{\mc{DC}_1}(\kk,\jj i) \to
\Hom_{\mc{DC}_1}(\kk i,\jj)$ and $\Hom_{\mc{DC}_1}(\kk,i \jj) \to \Hom_{\mc{DC}_1}(i \kk,\jj)$.\end{cor}

\begin{proof} The first isomorphism and its inverse are shown below. That these maps compose to be the identity
is exactly the relation (\ref{biadjoint}).

\begin{equation*}
\ig{.4}{Biadj1.eps} \ \ \ \mapsto \ \ \ \ig{.4}{Biadj2.eps} \quad \quad \quad \quad 
\ig{.4}{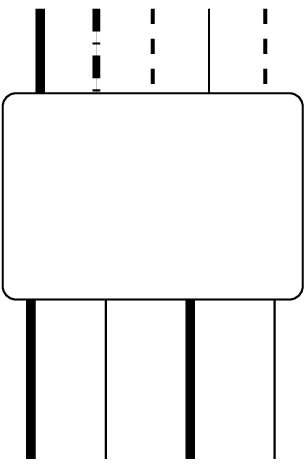}\ \ \  \mapsto \ \ \ \ig{.4}{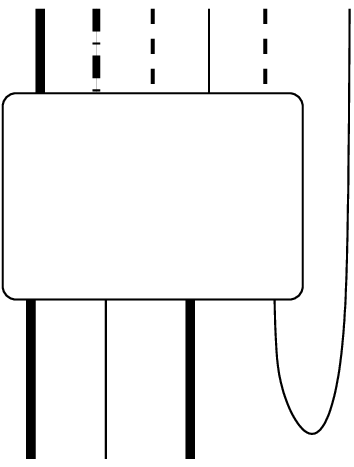}
\end{equation*} 

The second isomorphism and its inverse are the left-right mirror of the maps above. \end{proof}

Note that when $\kk=\emptyset$ in the corollary above, these isomorphisms combine to yield an isomorphism
$\Hom_{\mc{DC}_1}(\emptyset,\jj i) \to \Hom_{\mc{DC}_1}(\emptyset,i \jj)$, drawn as below.

\begin{equation*} \ig{.4}{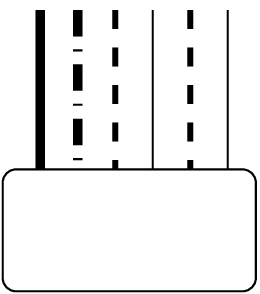} \mapsto \ig{.4}{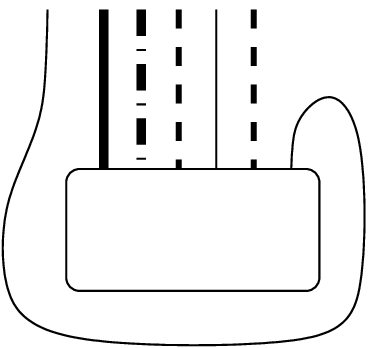} \end{equation*}

At this point, one could construct a semilinear product on the free algebra generated by $b_i$, $i=1, \ldots ,
n$, via $(b_{\ii},b_{\jj})=\grdrk \Hom_{\mc{DC}_1}(\ii,\jj)$, and $b_i$ would be self-adjoint. If we show that
the Hecke algebra relations are in the kernel of this semilinear product then it will descend to the Hecke
algebra. We have several methods by which we could do this.

\begin{itemize} \item Look in $\mc{DC}_2$, where we have direct sums and grading shifts, and prove the
isomorphisms (\ref{eqn-iibimod})-(\ref{eqn-ipibimod}). \item Look in the Karoubi envelope $\mc{DC}$, find
idempotents corresponding to the auxilliary modules in (\ref{eqn-auxbimod}) and friends, and show those
isomorphisms. \item Work entirely within $\mc{DC}_1$ and show the isomorphisms (\ref{eqn-ipibimod}) only after
applying the Hom functor. For instance, showing that $\Hom(ii,\jj) \cong \Hom(i,\jj)\{1\} \oplus
\Hom(i,\jj)\{-1\}$ will be sufficient.\end{itemize}

All these tactics are primarily the same. We illustrate the third method, although we do explore the auxilliary
modules of the second method.

The relation (\ref{twoLines}) precisely descends to $B_i \TenR B_i \cong B_i\{1\} \oplus B_i\{-1\}$. We
decompose the identity of $ii$ into the sum of two idempotents, and obtain orthogonal projections from $ii$ to
$i$ of degrees 1 and -1 respectively.

\begin{equation} \ig{.4}{twoLines.eps} \ \ = \ \ \ig{.4}{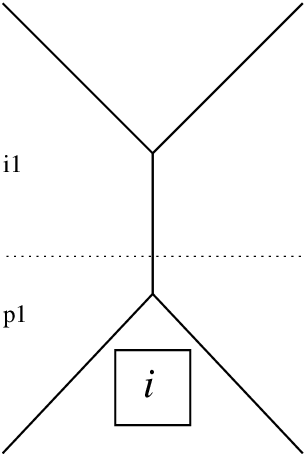} \ -\
\ig{.4}{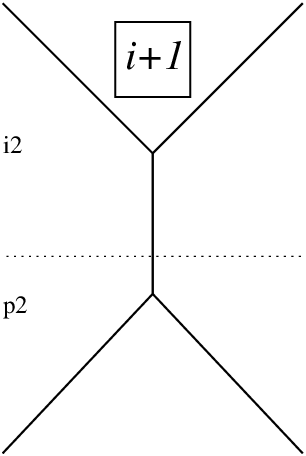} \end{equation}

Stated very explicitly, we have two projection maps $p_1,p_2 \colon ii \to i$, and two inclusion maps
$\alpha_1,\alpha_2 \colon i \to ii$, as indicated in the diagram above, which is just relation (\ref{twoLines})
divided in half. Include the minus sign on the right picture into the map $\alpha_2$. Then one can quickly
check $p_1\alpha_1=1_i$, $p_2\alpha_2=1_i$, $p_1\alpha_2=0$, $p_2\alpha_1=0$, and $1_{ii}= \alpha_1p_1 +
\alpha_2p_2$. So these must be projections to and inclusions of summands, and they are maps of the correct
degree.

Now we look at $i$ and $j$ which are distant. We have the isomorphism $B_i \TenR B_j \cong R \oTen{R^{i,j}}
R\{-2\} \cong B_j \TenR B_i$. It is clear that we may construct for any $\kk$ isomorphisms $\Hom(ij,\kk) \to
\Hom(ji,\kk)$ or vice versa merely by precomposing with the appropriate 4-valent vertex, and (\ref{eqn-R2move})
shows that these maps are inverses.

Because we can, let us discuss how we might extend our diagrammatic calculus to include additional modules from
$\RmolfR$, like $R \oTen{R^{i,j}} R\{-2\}$. Let us (temporarily) allow a new color of line in our pictures,
call it $w$, and extend the functor $\mc{F}_1$ so that it sends $w$ to $R \oTen{R^{i,j}} R\{-2\}$. We add new generators to our diagrammatics, and specify the image under the extended functor, as below.

\begin{eqnarray*}
  \mathrm{Symbol} & \mathrm{Degree} & \mathrm{Definition} \\
  \\
  \igv{.5}{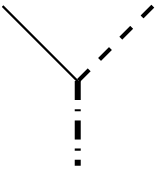} & 0 & \auptob{1 \teni 1 \tenj 1}{1 \tenfar 1} \\
  \\
  \ig{.5}{ijdown.eps} & 0 & \auptob{1 \tenfar 1}{1 \teni 1 \tenj 1}
\end{eqnarray*}

\begin{equation*} \ig{.5}{4valent.eps} \define \ig{.4}{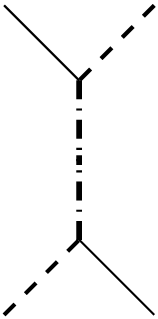} \end{equation*}

The definition of these bimodule maps, and the proof that they are in fact bimodule maps, is exactly akin to
the discussion in Section \ref{subsec-fulldefn}. It is clear that composing these morphisms to get an
endomorphism of $ij$ will yield the identity map, and composing them to get an endomorphism of $w$ will also
yield the identity map (these would be relations in the extended diagrammatic calculus). Thus we get
isomorphisms $ij \cong w \cong ji$ in $\mc{DC}$.

We now combine these techniques to deal with adjacent colors. We have isomorphisms $B_i \TenR B_{i+1}
\TenR B_i \cong B_i \oplus (R \oTen{R^{i,i+1}} R\{-3\})$ and $B_{i+1} \TenR B_i \TenR B_{i+1} \cong
B_{i+1} \oplus (R \oTen{R^{i,i+1}} R\{-3\})$. If we allow the new color of line, again called $w$,
to represent the bimodule $R \oTen{R^{i,i+1}} R\{-3\}$, then we may define the following maps:

\begin{eqnarray*}
  \mathrm{Symbol} & \mathrm{Degree} & \mathrm{Definition} \\
  \\
  \igv{.4}{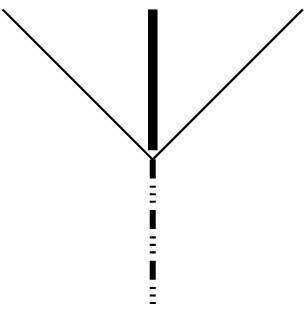} & 0 & \begin{array}{cc} \auptob{1 \teni 1 \tenp 1 \teni 1}{1 \tenadj 1} & 
\auptob{1 \teni \xi \tenp 1 \teni 1}{(\xi + \xp) \tenadj 1 - 1 \tenadj \xqq} \end{array} \\
  \\
  \ig{.4}{ipidown.eps} & 0 & \auptob{1 \tenadj 1}{1 \teni 1 \tenp 1 \teni 1} \\
  \\
  \igv{.4}{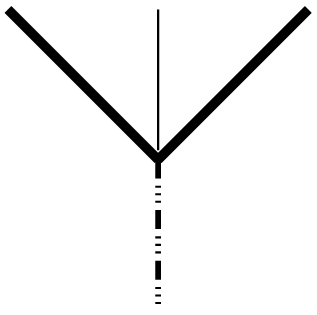} & 0 & \begin{array}{cc} \auptob{1 \tenp 1 \teni 1 \tenp 1}{1 \tenadj 1} & 
\auptob{1 \tenp \xqq \teni 1 \tenp 1}{1 \tenadj (\xp + \xqq) - \xi \tenadj 1} \end{array} \\
  \\
  \ig{.4}{pipdown.eps} & 0 & \auptob{1 \tenadj 1}{1 \tenp 1 \teni 1 \tenp 1}
\end{eqnarray*}

\begin{equation*} \ig{.4}{6valent.eps} \define \ig{.3}{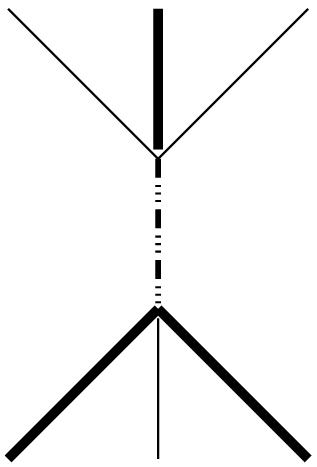} \end{equation*}
\begin{equation*} \igv{.4}{6valent.eps} \define \igv{.3}{6valentDef.eps} \end{equation*}

Again, checking that we have well-defined bimodule maps is akin to Section \ref{subsec-fulldefn}. Composing the
two maps that go through $i(i+1)i$ and $w$ to get an endomorphism of $w$ will yield the identity map of $w$,
and the same is true respectively of $(i+1)i(i+1)$.

Then the equation (\ref{eqn-threeLines}), which is a decomposition of the identity of $i(i+1)i$, actually
follows from this relation in $\mc{DC}$:

\begin{equation} \ig{.3}{threeLines1.eps}=\ig{.3}{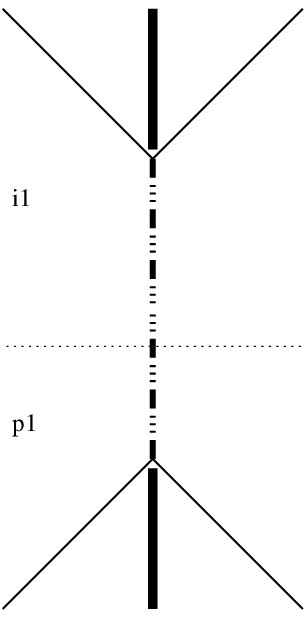} -
  \ig{.3}{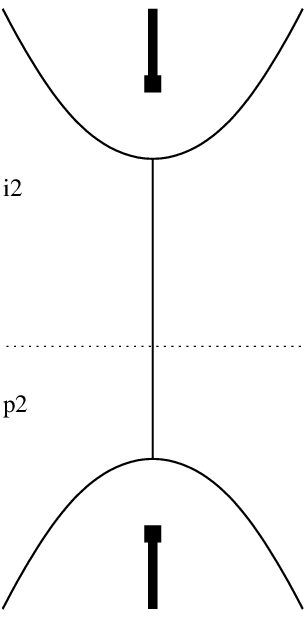} \label{eqn-threeLinesBad}\end{equation}

There is a more explicit statement to be derived from this relation, completely analogous to the two
line case, with projection and inclusion maps $p_1,\alpha_1,p_2,\alpha_2$ (include the minus sign on
the right picture in $\alpha_2$). Similarly we get a decomposition of $(i+1)i(i+1)$ via the same
relation with the colors switched.  The auxilliary module used here is in fact the indecomposable
Soergel bimodule $B_w$ where $w=s_is_{i+1}s_i=s_{i+1}s_is_{i+1}$.

To state the result without extending the calculus to the Karoubi envelope, one may merely observe that for any
$\kk$ there is a map $\Hom(i(i+1)i,\kk) \to \Hom((i+1)i(i+1),\kk)$ and vice versa given by precomposing with
the appropriate 6-valent vertex; there is another map $\Hom(i(i+1)i,\kk) \to \Hom(i,\kk)$ given by precomposing
with $\alpha_2$, and a map backwards given by precomposing with $p_2$. Then (\ref{eqn-threeLines}) exactly yields
that $\Hom(i(i+1)i,\kk) \oplus \Hom((i+1),\kk) \cong \Hom((i+1)i(i+1),\kk) \oplus \Hom(i,\kk)$.

Thus we have shown the following claim:

\begin{claim} The isomorphisms (\ref{eqn-iibimod}) through (\ref{eqn-ipibimod}) hold in $\mc{DC}_1$ after
applying any Hom functor. \end{claim}

We have now satisfied the requirements of Proposition \ref{prop-full} and can deduce that

\begin{claim} The functor $\mc{F}_1$ is full. \end{claim}

Finally, our graphical reductions give us a classification of certain Hom spaces.

\begin{cor} \label{cor-grrnk} The space $\Hom_{\mc{DC}_1}(\emptyset,\ii)$, where $\ii$ is a length $d$
increasing sequence, is a free left (or right) $R$-module of rank 1, generated by a homogeneous morphism of
degree $d$. \end{cor}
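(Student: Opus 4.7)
The plan is to combine Corollary~\ref{cor-oneofeach} with the explicit computation of $\Hom_{\mc{SC}_1}(R, B_{\ii})$ carried out at the end of Section~\ref{subsec-sbimcat}. Since $\ii = i_1 \cdots i_d$ is strictly increasing, each color appears at most once on the boundary of any graph representing a morphism in $\Hom_{\mc{DC}_1}(\emptyset, \ii)$. By Corollary~\ref{cor-oneofeach}, every such graph therefore reduces to a disjoint union of boundary dots. Because the top boundary carries exactly one endpoint of each color in $\ii$ and the bottom boundary is empty, the reduced graph $\Gamma_0$ is uniquely determined up to isotopy: it consists of $d$ boundary dots meeting the top, one of each color $i_1, \ldots, i_d$. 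Its degree equals $d$, one for each dot.

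Next I would observe that $\Gamma_0$ does not separate the strip $\R \times [0,1]$, since each component is a single dangling arc. Hence there is a single region, playing the role of both the lefthand and righthand region simultaneously, and any decoration of $\Gamma_0$ by polynomials collapses into a single factor $f \in R$ acting on either side. Combined with the reduction above, this shows that $\Hom_{\mc{DC}_1}(\emptyset, \ii)$ is generated as a left (and equivalently as a right) graded $R$-module by the single element $\Gamma_0$ of degree $d$.

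Finally, to rule out any nontrivial $R$-linear relation on $\Gamma_0$, I would appeal to the functor $\mc{F}_1$. Under $\mc{F}_1$, a boundary dot of color $i$ maps to the morphism $\phi_i \colon R \to B_i$, so $\mc{F}_1(\Gamma_0)$ is the iterated composition $R \to B_{i_1} \to B_{i_1} \TenR B_{i_2} \to \cdots \to B_{\ii}$ shown in Section~\ref{subsec-sbimcat} to freely generate the rank-one $R$-module $\Hom_{\mc{SC}_1}(R, B_{\ii})$. Since $\mc{F}_1$ is $R$-linear, any $f \in R$ with $f \cdot \Gamma_0 = 0$ would also kill $\mc{F}_1(\Gamma_0)$, forcing $f=0$; so $\Gamma_0$ freely generates a rank-one module of the correct graded rank $t^d$. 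The only delicate point is the middle step, namely that after the graph-theoretic reduction all polynomial decorations really do collapse into one overall scalar---but this is immediate once the one-region observation is made, so with the reduction machinery of the previous sections in hand the rest of the proof is formal.
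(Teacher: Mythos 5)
Your proof is correct and follows essentially the same route as the paper's: apply Corollary~\ref{cor-oneofeach} to reduce every morphism to a disjoint union of boundary dots, note that this graph has a single region so all polynomial decorations merge into one factor $f \in R$, and then use the functor $\mc{F}_1$ together with the known freeness of $\Hom_{\mc{SC}_1}(R, B_{\ii})$ to rule out relations. The only differences are expository --- you spell out the uniqueness of the reduced graph and the $R$-linearity of $\mc{F}_1$ a bit more explicitly --- but the argument is the same.
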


\begin{proof} Let $\phi$ be the morphism consisting entirely of boundary dots. This corresponds, under the
functor, to the generator of degree $d$ discussed in Claim \ref{increasinghom}. By Corollary
\ref{cor-oneofeach}, $\phi$ viewed as a graph underlies every morphism in $\mc{DC}_1$. This graph has a single
region, so any morphism that it underlies will be generated by $\phi$ under the action of $R$ on the right or
left, which puts a polynomial into that single region. This gives a surjective map from $R$ to the Hom space.
After applying $\mc{F}_1$, we know that the morphisms must surject onto $\Hom_{\mc{SC}_1}(R,B_{\ii})$, which is
a free $R$-module (see Claim \ref{increasinghom}). Therefore, the Hom space in $\mc{DC}_1$ must also be free.
\end{proof}

\begin{cor} The semilinear form on $\mc{H}$ induced by Hom spaces in $\mc{DC}_1$ agrees with the form defined
in Section \ref{subsec-hecke}, so it agrees with the form induced by $\mc{SC}_1$. Therefore $\mc{F}_1$ is
faithful. \end{cor}

\begin{proof} This is now immediate, from Remark \ref{unicity}. \end{proof}

In conclusion, $\mc{F}_1$ is an equivalence of categories, and Theorem~\ref{mainthm} is proven.

We also get for free the following corollary, which is difficult to prove purely diagrammatically.

\begin{cor} Hom spaces in $\mc{DC}_1$ are free as left or right $R$-modules. \end{cor}

\begin{remark} \label{whatisprovendiagrammatically} It is worth reiterating what is proven diagrammatically,
and what is proven using the functor $\mc{F}_1$ to Soergel's category. Diagrammatically, we can prove
Proposition \ref{prop-imoves} and Corollary \ref{cor-oneofeach}, which implies that $R$ surjects onto
$\Hom_{\mc{DC}_1}(\emptyset,\ii)$ for $\ii$ increasing. However, without the full functor $\mc{F}_1$ and the
non-diagrammatic knowledge of Hom spaces in Soergel's category, we do not know how to prove that the Hom space
is free. We do not have a fully diagrammatic proof that the Hom spaces in $\mc{DC}_1$ are what they are, we
only have a diagrammatic proof of an upper bound on the size of the Hom spaces. \end{remark}

%
\subsection{The $e_1$ quotient}
\label{subsec-quotient}
%

In the remainder of this chapter, we provide some sketched statements about generalizations and variations of
$\mc{DC}_1$. We do not use these results elsewhere in the paper, and while the proofs are only sketched, they
are fairly obvious and can be fleshed out without too much work.

As described in Remark \ref{dumbchange}, one usually constructs Soergel bimodules with respect to the
$n$-dimensional fundamental or geometric representation, instead of the $n+1$-dimensional standard
representation. This amounts to working over the ring $R_1 = \Bbbk \left[ x_1, x_2, \ldots, x_{n+1} \right] /
(x_1+x_2 + \ldots + x_{n+1}) $, which is a quotient of $R$ by the first elementary symmetric function $e_1$.
Since $e_1$ is symmetric, it is in the center of the category $\mc{DC}_1$ (it slides freely under all lines,
tensor-commuting with all morphisms), and one may easily take the quotient category, setting $e_1=0$, without
changing any of the diagrammatics. There is a functor from this quotient category to the appropriate category
of Soergel bimodules in $R_1\mathrm{-molf-}R_1$, which is an equivalence of categories, so this diagrammatical
quotient category also categorifies the Hecke algebra.

One advantage to passing to the quotient $e_1=0$ is that, after inverting a suitable integer, we may remove the
boxes from our list of generators. According to relation (\ref{doubleDot}), the double dot colored $i$ is equal
to $\xi - \xp$. Thus linear combinations of double dots of colors $i=1, \ldots , n$ will give us the
$\Bbbk$-span of $x_1 - x_2, x_2 - x_3, \ldots$ inside the space of linear polynomials (these are the simple
roots). This span will not include $\xi$ in $R$, which is why we require at least one box as an additional
generator. However, it is easy to check that, if $n+1$ is invertible in $\Bbbk$, then $\xi$ is in the
$\Bbbk$-span after passing to the quotient $R_1$. As an example, when $n=1$, the double dot is equal to
$x_1-x_2$, and passing to $x_1+x_2=0$, we get $x_1=\frac{1}{2}(x_1-x_2)$. Thus, if $n+1$ is invertible in
$\Bbbk$, one can eliminate boxes from the quotient calculus altogether, replacing them with linear combinations
of double dots.

{\bf Addendum} All the relations which involve boxes can be rewritten so that they only involve double dots.
This is not done here, but can be found in any of the papers \cite{E,Vaz,EKr}.

Replacing boxes with double dots is much more natural, since it emphasizes that the boxes themselves should
have a color, and that the set of polynomials depends in a natural way upon the coxeter group $S_{n+1}$.
Viewing them as boxes may help make the proofs in this paper more intuitive, however.

%
\subsection{Color elimination}
\label{subsec-colorelimination}
%

We have already shown that $\mc{DC}_1 \cong \mc{SC}_1$, without needing to investigate in depth any morphisms
except those from $\emptyset$ to $\ii$ an increasing sequence. Because this pins down the size of all Hom
spaces, we can deduce some additional facts about general morphisms. The following result is not used elsewhere
in this paper, but may come in handy when constructing the analogous category for arbitrary Coxeter systems.

\begin{prop} \label{prop-colorelimination} Let $\Gamma$ be a graph where the color $j$ does not appear in the
boundary. Then $\Gamma$ can be reduced to graphs not containing the color $j$ at all.\end{prop}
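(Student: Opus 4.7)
The plan is to induct on the simplicity of $\Gamma$. The base case, where $j$ does not appear in $\Gamma$, is immediate. For the inductive step, let $\Gamma_j$ denote the $j$-subgraph of $\Gamma$; since $j$ does not occur on the boundary of $\Gamma$, $\Gamma_j$ has no boundary, so every connected component of $\Gamma_j$ is a closed subgraph in the strip.

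If neither $j-1$ nor $j+1$ appears in $\Gamma$, then Proposition \ref{prop-imoves} grants access to all $j$-colored basic moves, and the argument of Corollary \ref{cor-empty} reduces $\Gamma_j$ to the empty graph. This disposes of the easy case. In general, some of the ``trivalent'' vertices of $\Gamma_j$ are actually 6-valent vertices of $\Gamma$ involving $j\pm 1$, and for those the ordinary one-color moves do not apply directly. I would therefore run a secondary induction on the number of 6-valent vertices in $\Gamma$ incident to color $j$, aiming to whittle them down (modulo simpler graphs) until none remain, so that the opening paragraph applies.

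The main step is to pick an innermost face $F$ of $\Gamma_j$, namely a region of the strip whose boundary is built from $j$-edges and whose interior meets no $j$-edge, and to analyse the contents of $F$. Inside $F$, interaction with $j$ occurs only at the 6-valent vertices sitting on $\partial F$. Using the pull-through relations (\ref{eqn-pullFarThruTrivalent}) and (\ref{eqn-pullFarThru6Valent}) together with the one-color reductions for colors $j-1$ and $j+1$ inside $F$ (which are legitimate there because $j$ itself is absent from the interior), I can reduce to the situation where each emerging $(j\pm 1)$-edge either terminates in a dot at a 6-valent vertex of $\partial F$, or connects two such 6-valent vertices directly. A dot abutting a 6-valent vertex is absorbed by relation (\ref{eqn-ipipipDot}), which breaks the 6-valent vertex and decreases the count. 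A direct interior connection between two 6-valent vertices of the same adjacent color can be handled by applying the double overlap associativity relations (\ref{eqn-ipipipAss}) and (\ref{eqn-ipipipAssWDot}) to slide the vertices together, then invoking the decomposition (\ref{eqn-threeLines}) to cancel the pair up to simpler graphs; when the connection involves both $j-1$ and $j+1$, the triple overlap associativity (\ref{eqn-threeColorAss}) is needed first to permute the colors past one another.

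The main obstacle is the combinatorial bookkeeping on $\partial F$: one must argue that, after the one-color simplifications of $\Gamma_{j-1}$ and $\Gamma_{j+1}$ inside $F$, there is always \emph{some} 6-valent vertex of $\partial F$ that can be brought into a configuration where (\ref{eqn-ipipipDot}), (\ref{eqn-threeLines}), or (\ref{eqn-threeColorAss}) becomes applicable. Ruling out pathological interleavings of $j-1$ and $j+1$ attachments around $\partial F$ is the delicate part, but planarity of the diagram together with the triple overlap relation should ensure that the nested structure inside $F$ can always be unraveled. Once the secondary induction terminates, no 6-valent vertex involves $j$, so $\Gamma_j$ is decoupled from colors $j\pm 1$ in each connected region of the complement of $\Gamma_{j-1}\cup\Gamma_{j+1}$, and the easy case above eliminates $j$ region by region.
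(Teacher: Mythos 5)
Your strategy is a purely graphical one, which is a genuinely different route from the paper's. The paper's proof is short and indirect: it regards the graph as a morphism from $\emptyset$ to $\jj$ (where $j\notin\jj$), then appeals to the already-established description of $\Hom_{\mc{DC}_1}(\emptyset,\jj)$ via biadjunction and the idempotent decompositions (\ref{twoLines}), (\ref{eqn-threeLines}), (\ref{eqn-R2move}); since these operations only shuffle colors already present in $\jj$, the morphism must lie in the span of graphs built exclusively from colors appearing on the boundary. The authors then explicitly remark that ``There should be a purely graphical proof of this proposition, but it is complicated by the failure of $i$-colored associativity when both adjacent colors are present.'' In other words, the route you are proposing is precisely the one the authors identify as open.

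The gap in your proposal sits exactly at that complication and is not resolved. You acknowledge it yourself when you write that ``ruling out pathological interleavings of $j-1$ and $j+1$ attachments around $\partial F$ is the delicate part, but planarity \ldots should ensure that the nested structure \ldots can always be unraveled.'' That is a hope, not an argument; it is precisely where Proposition \ref{prop-imoves} does not apply, since the $j$-colored associativity move is unavailable when an $i$-colored ``H'' has one vertex $6$-valent with $j+1$ and the other $6$-valent with $j-1$. Concretely, the picture the paper labels \texttt{noAss} can appear on $\partial F$, and none of (\ref{eqn-ipipipDot}), (\ref{eqn-threeLines}), (\ref{eqn-threeColorAss}) applies to it directly.

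There are two further unaddressed difficulties. First, you invoke ``the one-color reductions for colors $j-1$ and $j+1$ inside $F$'' and justify them because ``$j$ itself is absent from the interior,'' but Proposition \ref{prop-imoves} requires \emph{both} adjacent colors to be absent: applying $j-1$-colored moves inside $F$ fails if $j-2$ is present there, and likewise for $j+1$ and $j+2$, so you would need an additional nested induction on colors away from $j$ — and even there the same noAss obstruction reappears recursively. Second, your secondary induction ``on the number of $6$-valent vertices incident to $j$'' is not clearly well-founded: as the paper points out in Case IV of the associativity proof, a legitimate reduction move can \emph{increase} the total number of $6$-valent vertices, and your moves may produce linear combinations where some terms have more $j$-incident $6$-valent vertices than you started with. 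Without a monotone complexity measure covering all terms, the induction does not terminate. In short: the graphical approach is attractive and would be a genuine improvement over the paper's indirect argument if completed, but the key combinatorial lemma you would need — that planarity plus the triple-overlap relation always lets you break the nested $j\pm 1$ structure on an innermost $j$-face — is exactly the part that remains unproved.
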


We can already show this when $j$ is extremal, since by Proposition \ref{prop-imoves} we may apply the weak
$j$-colored basic moves and reduce the color $j$ to the empty graph. Unfortunately, as in Remark
\ref{whatisprovendiagrammatically}, we do not have a diagrammatic result of this proposition in general, but
use the known size of Hom spaces to prove it.

For $X \subset \{1, \ldots ,n\}$, (i.e. for a Coxeter subgraph of $A_n$) we let $\mc{DC}_1[X]$ be the category
defined analogously but where edges can only be labelled by colors in $X$. If colors which don't appear on the
boundary are not needed in the graph after reduction, then the natural inclusion of $\mc{DC}_1[X]$ into
$\mc{DC}_1$ is fully faithful, which one would expect. The full faithfulness of this inclusion is equivalent to
the above proposition.

However, as discussed in the previous section, the boxes which are allowed to appear should also depend on $X$.
For the rest of this section, we assume we are working in the $e_1$ quotient. Let $\mc{DC}_1(X)$ be the
category defined analogously, where edges are labelled in $X$, and where the only polynomials appearing are in
the subring $R(X)$ generated by double dots colored in $X$. This is really the category we should look at.
Given a morphism in $\mc{DC}_1[X]$, we can use polynomial forcing rules to guarantee that the only polynomials
not in $R(X)$ appear in the lefthand region. We do not prove it here, $\mc{DC}_1[X]$ is simply $\mc{DC}_1(X)
\ot_{R(X)} R$; that is, the objects are unchanged, and morphism spaces undergo base change.

\begin{prop} Let $X$ be a Coxeter subgraph of $A_n$, and let $\mc{W}(X)$, $\mc{H}(X)$, $\mc{SC}_1(X)$ designate
the corresponding constructions for this Coxeter graph. Then there is a functor $\mc{F}_1(X)$ from
$\mc{DC}_1(X)$ to $\mc{SC}_1(X)$ which is an equivalence of categories. \end{prop}

\begin{proof} We will only sketch this result. The set $X$ will be a disjoint union of various subgraphs $X_l$,
each isomorphic to $A_{m_l}$ for some $m_l \le n$. For each $X_l$ we know that $\mc{DC}_1(X_l) \cong
\mc{SC}_1(X_l)$, and we have all the results aforementioned. Moreover, for any $l \ne l'$ and objects $M \in
\mc{DC}_1(X_l)$ and $N \in \mc{DC}_1(X_{l'})$, he have natural isomorphisms $M \TenR N \cong N \TenR M$
constructed with 4-valent crossings. Thanks to the distant sliding rules, these natural isomorphisms commute in
the proper way with all morphisms in $\mc{DC}_1(X_l)$ or $\mc{DC}_1(X_{l'})$. The category $\mc{DC}_1(X)$ can
be constructed via some universal ``symmetric monoidal product" construction, by taking the product over the
categories $\mc{DC}_1(X_l)$. The same holds true for $\mc{SC}_1(X)$, which will yield the result. \end{proof}

Given the sketchy result above, we can calculate the rank of Hom spaces in $\mc{DC}_1(X)$ by using the standard
trace map on $\mc{H}(X)$. This trace map commutes with the standard trace on $\mc{H}$ under the inclusion
$\mc{H}(X) \subset \mc{H}$. Thus we see that, for objects $\ii$ and $\jj$ in $\mc{DC}_1(X)$, the space
$\Hom_{\mc{DC}_1}(\ii,\jj)$ is a free $R$ module of some rank $r$, and the space $\Hom_{\mc{DC}_1(X)}(\ii,\jj)$
is a free $R(X)$ module of the same rank $r$. Therefore $\Hom_{\mc{DC}_1[X]}(\ii,\jj)$ is a free $R$ module of
rank $r$ as well, and the inclusion of $\mc{DC}_1[X]$ in $\mc{DC}_1$ is fully faithful.

We quickly sketch an alternative proof of Proposition \ref{prop-colorelimination}, which assumes that we know
that Hom spaces in $\mc{DC}_1$ conform to the standard trace. We can think of any graph with boundary as a
morphism in $\mc{DC}_1$ from $\emptyset$ to $\jj$. We have already ``inductively calculated'' the space of such
morphisms, along the lines of Remark \ref{unicity} and Proposition \ref{prop-full}. Namely, if $\jj$ has no
repeated colors, then every morphism is a polynomial with boundary dots. If $\jj$ has repeated colors, we use
idempotent decompositions to consider the Hom space as the sum of the Hom spaces of its summands, or apply
biadjunction to cycle the sequence $\jj$, until we have expressed the Hom space in terms of
$\Hom(\emptyset,\kk)$ for various $\kk$ non-repeating. Since neither biadjunctions nor idempotent
decompositions add any new colors to the graph, we have just constructed a generating set of morphisms in a way
which does not involve any colors which were not in $\jj$ to begin with.

\section{Proofs}
\label{sec-proofs}

It remains to prove Proposition \ref{f1isafunctor}, which we do in the first section, and Proposition
\ref{prop-imoves}, which we do in the second.

%
\subsection{$\mc{F}_1$ is a functor}
\label{subsec-f1}
%

We need now to check that each of our relations holds true for Soergel bimodules, after application of
$\mc{F}_1$. The relations are listed in full in Section \ref{subsec-fulldefn}. These checks are entirely
tedious and straightforward, and require little imagination. With a little imagination, however, there are some
tricks which make checking most relations trivial.

Again, we always use relation \ref{useoften} to identify elements of $B_{\ii}$ with $d(\ii)+1$ tensors of
polynomials.

For brevity, we will call any element which is a tensor product $1 \otimes 1 \otimes 1 \otimes \ldots$ a
\emph{1-tensor}. EndDot, Split, and the 4-valent and 6-valent vertices all send 1-tensors to 1-tensors. This
simple fact is already enough to prove relations (\ref{assoc}), (\ref{eqn-R2move}), (\ref{eqn-ijijDot}),
(\ref{eqn-pullFarThruTrivalent}), and (\ref{eqn-R3move}), since the bottom bimodule in each of these pictures
is generated by the 1-tensor; we call this the \emph{1-tensor trick}. Caps and Merges kill a 1-tensor, while Cups and StartDots send it to a sum of two
linear terms.

There are several choices to make when checking the various relations. Once the twisting relations are shown,
one is free to prove any twist of the other relations. Also, one may choose which set of generators of the
source bimodule to check equality on. Finally, whenever a Cup or a StartDot appears, there are two equivalent
ways to write the result, since $\xi \teni 1 - 1 \teni \xp = 1 \teni \xi - \xp \teni 1$ in $B_i$, and
one might be easier than the other to evaluate.

The first trick is to choose the set of generators which makes the calculation the easiest. Let us remind
ourselves of the arguments of Remark \ref{spanningintro}. An arbitrary module $B_{\ii}$ of length $d$ will have
$2^{d-m}$ generators as an $R$-bimodule, where $m$ is the number of different colors that appear. Let us use
the term $i$-\emph{pair} to denote two instances of the index $i$ in $\ii$, separated only by colors $\ne i$.
Let $X$ denote the set of all $i$-pairs for all $i$; the size of $X$ is $d-m$. As we force polynomials to the
right or left, a variable $\xi$ (or alternatively $\xp$) might get stuck between the two $i$-colored lines of
an $i$-pair, and this independently of each other $i$-pair or $j$-pair for $j \ne i$. The following claim is
easy to show from the forcing rules.

\begin{claim} Each $B_{\ii}$ will be generated as an $R$-bimodule by \emph{any} set $Y$ of $2^{d-m}$
\emph{linearly independent} tensors, for which we have a bijection between $Y$ and the power set of $X$,
satisfying the following property: If a tensor $y \in Y$ corresponds to a subset $X_y \subset X$, then each
$i$-pair in $X_y$ corresponds to a distinct linear factor of $y$ which is either $\xi$ or $\xp$ somewhere
inside the $i$-pair. \end{claim}

It's a mouthful, but an example clarifies it. The following is an example of a set of generators for
$B_{(i-1)(i+1)i(i+1)(i-1)i}$, where $d-m=3$ so we need $8$ generators:

\igc{.8}{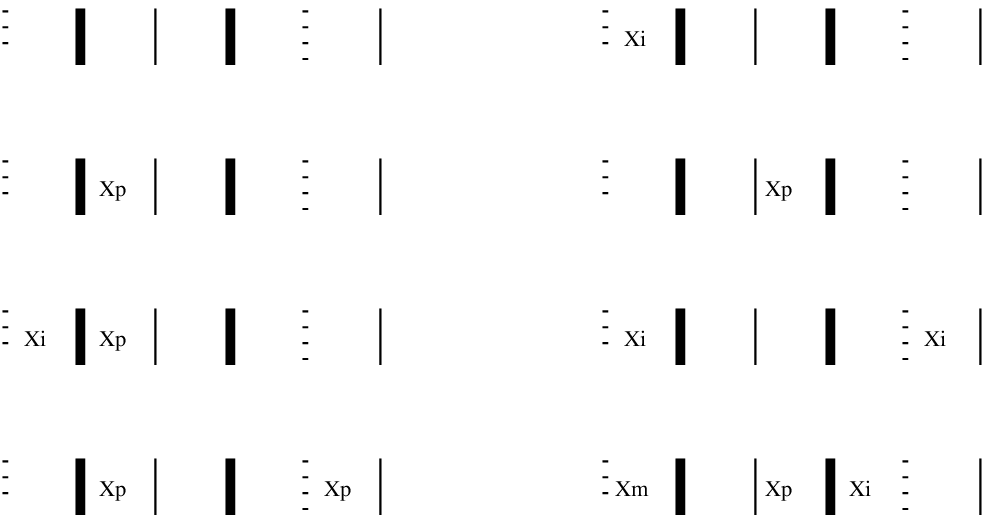}

Each picture represents a tensor, where by convention a blank area is filled with a 1-tensor. Reading across,
the first picture corresponds to the empty subset of $X$. The second picture corresponds to the $i-1$-pair, the
third to the $i+1$-pair, and the fourth to the $i$-pair. Since these three are linearly independent, they take
care of all the linear generators. Clearly the fourth picture could have also worked for the $i+1$-pair, but if
we had chosen it as such, we would have had to choose a new linearly independent vector for the $i$-pair. The
fifth generator corresponds to the $i-1$-pair and the $i+1$-pair, the sixth generator to the $i-1$-pair and the
$i$-pair, and the seventh generator to the $i$-pair and the $i+1$-pair. These three are also linearly
independent. The final generator corresponds to the entire set $X$.

A clever choice of which generators to use may greatly simplify a calculation by reducing the number
of terms in intermediate steps. There are two main instances when this occurs. Either 6-valent vertex
with strands $i$ and $i+1$ will send $1 \otimes \xp \otimes 1 \otimes 1$ or $1 \otimes 1 \otimes \xp
\otimes 1$ to a single tensor, while it may send $1 \teni \xi \tenp 1 \teni 1$ to the sum of two
tensors, thus doubling the work we need to do in the remainder of the calculation. Also, $\xi\xp$
entering a 6-valent vertex $i(i+1)i$ in the second slot may by moved across the $i$-strand to the
left for a simple calculation, while $\xi^2$ leads to a more complicated solution.

The second trick will be a useful diagrammatic way of evaluating homomorphisms in $\mc{SC}_1$, which really
only works well when applied to a well-chosen set of generators. In particular, the choice of generators above
makes the verification of triple overlap associativity (\ref{eqn-threeColorAss}) rather straightforward. We
demonstrate the graphical method in the most difficult case, the highest degree generator.

\begin{center} $\ig{.8}{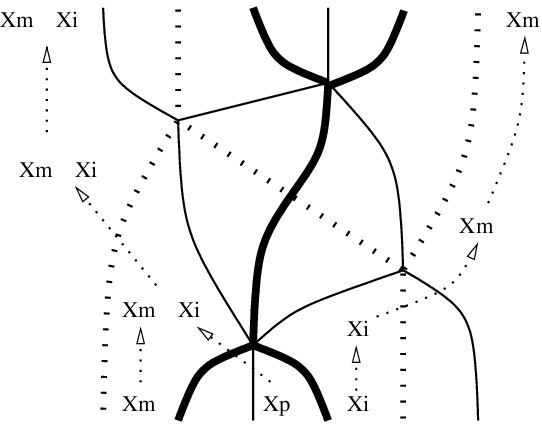}$ \end{center} 

\begin{center} $\ig{.8}{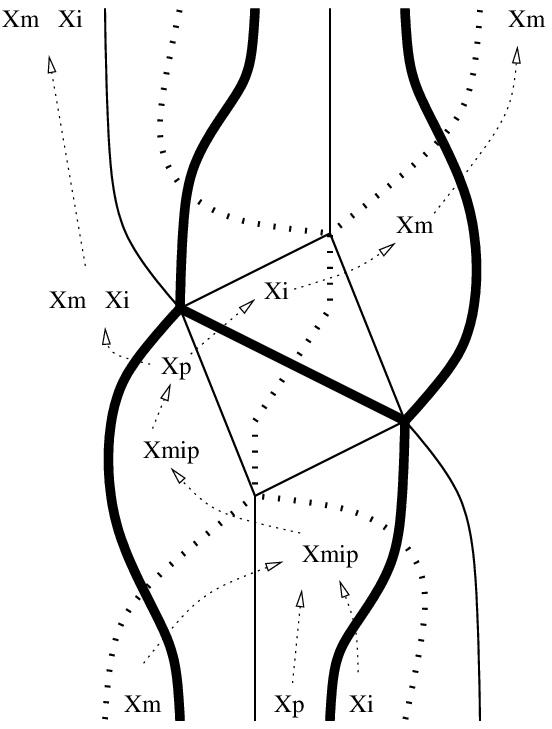}$ \end{center} 

Here $x'=x_{i-1}x_ix_{i+1}$. 

We keep track of the image at every stage in the calculation, which is one term as shown, with all blank spaces
being filled by 1-tensors. An arrow indicates either bringing a symmetric polynomial through a line, or
applying a 6-valent vertex to a tensor. The lack of an arrow indicates that a 1-tensor is sent to a 1-tensor.
This works well only because every intermediate term is a single tensor, not a sum of tensors.

We leave it as an exercise to the reader to verify, using this graphical method of calculation, that both sides
of the triple overlap associativity relation, as displayed above, send the other 7 generators to the same
elements. Precisely, both sides send the 1-tensor to a 1-tensor, and the other generators to 1-tensors with
polynomials in various slots: the 2nd and 4th generators to $\xqq$ in the last slot, the third generator to
$\xm$ in the last slot, the 5th to $\xm\xqq$ in the last slot, the 6th to $\xi\xm$ in the first slot, the 7th
to $\xp\xqq$ in the first slot, and the 8th as shown above to $\xi\xm$ in the first slot and $\xm$ in the last
slot. We promise that the graphical method will work for this set of generators.

One can extend this graphical method easily to handle other morphisms. As an example, we show the
Merge map below.

\begin{center} $\ig{.4}{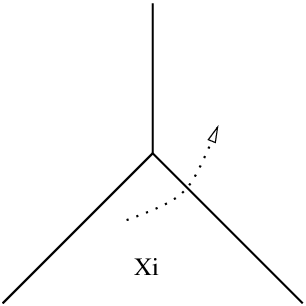} \quad \ig{.4}{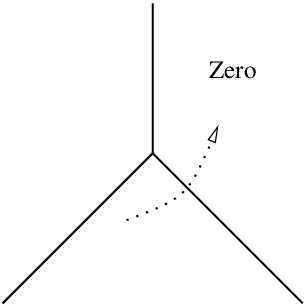}$ \end{center}

When $\xp$ enters a Merge, it is sent to $-1$. Almost the same pictures can be drawn for the Cap. Split and
EndDot send 1-tensors to 1-tensors, and no arrows are needed. 

When there is a cup or a StartDot, a 1-tensor is sent to a sum of two terms, each of which must be evaluated
separately. However, the sum can be written as either $\xi \teni 1 - 1 \teni \xp$ or $1 \teni \xi - \xp \teni
1$, so we may choose the one whichever is more convenient. Often the problem of having two terms is very
temporary. For example, consider what happens to the 1-tensor under the map

\igc{.3}{cupCap.eps}

The cup creates two terms, the first with a 1 under the cap and the second with $\xi$ under the cap. But
the first term is annihilated immediately by the cap, and the cap eats the $\xi$ from the other
term, to return back a 1-tensor. So long as a cup or a StartDot appears right next to a cap or a
merge, one of the two terms is always immediately annihilated. This is the case for
(\ref{biadjoint}), (\ref{twistMerge}), (\ref{twistDot1}), which all send 1-tensors to 1-tensors as a
result. After a quick polynomial slide, the same is true for (\ref{eqn-ijijRot}), and using the more
convenient choice so the $\xp$ ends up underneath the 6-valent vertex, a quick calculation shows
it for (\ref{eqn-ipipipRot}) as well. We call this the \emph{cupcap trick}.

We now list and prove the necessary relations, following the same order as in Section \ref{subsec-fulldefn}.
One could print out this list and hold it next to that section, where the relations are and the functor is
defined, to make this ordeal easier.

\begin{itemize}
\item Biadjointness (\ref{biadjoint}) follows from the cupcap trick.
\item Trivalent twisting (\ref{twistMerge}) follows from the cupcap trick.
\item Associativity (\ref{assoc}) follows from the 1-tensor trick.
\item Dot twisting (\ref{twistDot1}) follows from the cupcap trick.
\item The needle consists of a split, sending a 1-tensor to a 1-tensor, and a cap, killing a 1-tensor. Hence the needle relation (\ref{needle}) follows.
\item The polynomial slide relations clearly hold for Soergel bimodules, by definition.
\item For the broken line relation (\ref{dotSpaceDot}), both sides clearly send the 1-tensor to $\xi \teni 1 - 1 \teni \xp$.
\item For the double dot relation (\ref{doubleDot}), both sides clearly send the 1-tensor to $\xi - \xp$. 
\item The three-line decomposition relation (\ref{eqn-threeLines}) is a calculation. We need to show it for both color variants, whether thin is $i$ and thick $i+1$ or vice versa. Let $w$ be the 1-tensor and $x=1 \teni \xp \teni 1 \teni 1$, regardless of which variant we are in. Then it is easy to check that
$$
\begin{array}{ccc}
\ig{.2}{threeLines1.eps}=\ig{.2}{threeLines4.eps} - \ig{.2}{threeLines3.eps} & \auptob{w}{w} & 
\auptob{x}{x}
\end{array}
$$
Consider what happens to $x$, under the first color variant. The double 6-valent vertex sends $x$ to $1 \teni 1 \tenp 1 \teni \xqq = 1 \teni 1 \tenp \xqq \teni 1$, while the rightmost picture sends $x$ to $1 \teni \xp \tenp 1 \teni 1 - 1 \teni 1 \tenp \xqq \teni 1$. The sum of these two is just $x$ again. We leave similarly easy calculations to the reader in the future.
\item 6-valent twisting (\ref{eqn-ipipipRot}) follows from the cupcap trick.
\item Adding a dot to a 6-valent vertex (\ref{eqn-ipipipDot}) needs to be checked for both color variants. Define $w$ and $x$ as before, and it is easy to check that
$$
\begin{array}{ccc}
\igv{.2}{ipipipDot1.eps} = \igv{.2}{ipipipDot2.eps} + \igv{.2}{ipipipDot3.eps} & \auptob{w}{w} & 
\auptob{x}{1 \teni 1 \tenp 1 \teni y}
\end{array}
$$
Here $y=\xi$ for one color variant, $y=\xqq$ for the other.
\item Two-color associativity (\ref{eqn-ipipipAss}) is a calculation. We recommend using the following twist 
\begin{equation*} \ig{.3}{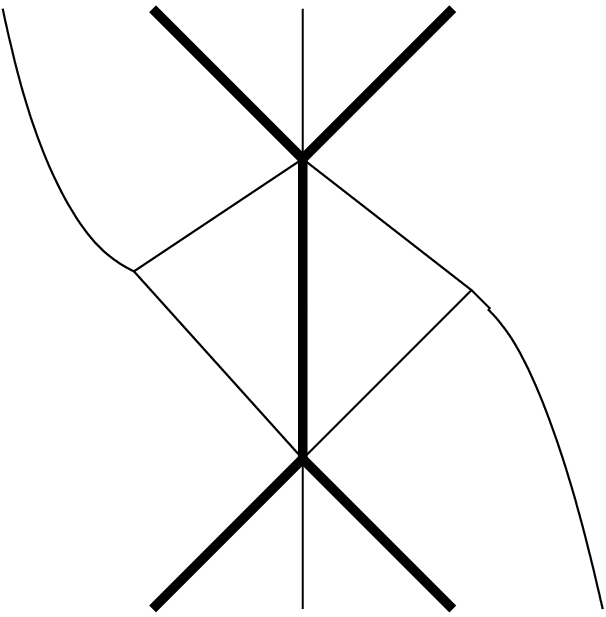} = \ig{.3}{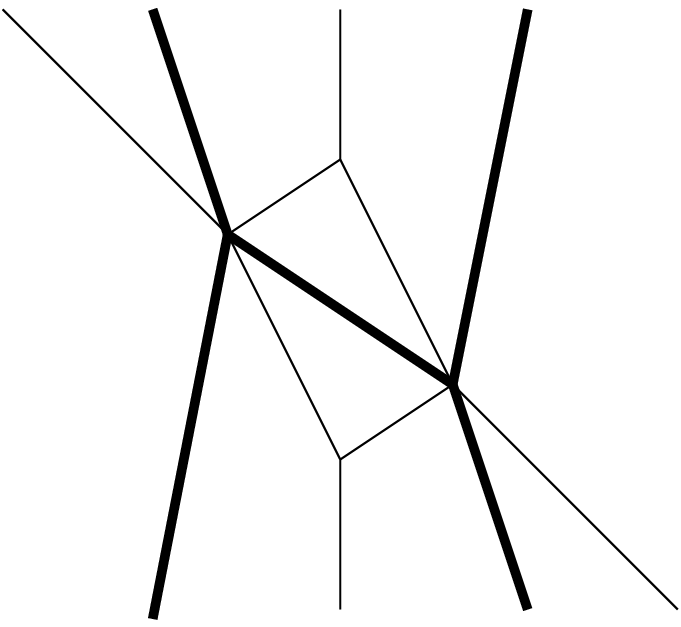}\end{equation*}
and using the four generators: a 1-tensor, $1 \tenp 1 \teni \xp \tenp 1 \teni 1$, $1 \tenp \xp \teni 1 \tenp 1 \teni 1$ and $1 \tenp \xp \teni 1 \tenp \xp \teni 1$. Evaluate using the graphical calculus. The first two generators are killed by both maps thanks to merge maps. The third generator is send to a 1-tensor by both maps. The final generator is also killed by both maps, as symmetric polynomials are slid out of the way and merge maps eat the remaining 1-tensor.
\item 4-valent twisting (\ref{eqn-ijijRot}) follows from the cupcap trick.
\item Sliding a dot or a trivalent vertex through a distant line both follow from the 1-tensor trick.
\item Sliding a 6-valent vertex through a distant line is easily checked on both generators.
\item Sliding a 4-valent vertex through a distant line follows from the 1-tensor trick.
\item Three-color associativity will be a calculation, using the generators and pictures described above, that we leave to the reader.
\end{itemize}

%
c\subsection{Graphical proofs}
\label{subsec-graphproof}
%

In this section, we provide graphical proofs for a series of propositions which collectively prove Proposition
\ref{prop-imoves}. Recall the definitions of the strict and weak $i$-colored moves from Section
\ref{subsec-coloredmoves}. Remember that an extremal color $i$ is one which appears in a graph, but for which
either $i-1$ or $i+1$ does not appear. Each of the results states what reduction moves the relations of
$\mc{DC}_1$ allow. Because more than three colors may be required for some proofs, we use extra line styles
that designate other arbitrary colors, and are very explicit about what colors they are allowed to be. A thin
line will still always represent $i$ and a thick line $i+1$, but we will be lax about other colors.

\begin{lemma} \label{lem-pullThru} One may \emph{strictly} pull a distant line under any other graph, like in
the relations (\ref{eqn-R2move})-(\ref{eqn-R3move}). That is, these relations can be viewed as graph reduction
moves. \end{lemma}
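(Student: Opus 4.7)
The plan is to prove this by induction on the number of vertices in $\Gamma$, where by ``vertex'' I mean any of: a dot, a trivalent vertex, a $4$-valent vertex, or a $6$-valent vertex. The key observation is that since every color appearing in $\Gamma$ is distant from $j$, the only interactions between the $j$-strand and $\Gamma$ are generic crossings (handled by the R2 move \eqref{eqn-R2move}) and the four types of local situations that occur when the $j$-strand passes over one of the vertices of $\Gamma$, each of which is addressed by exactly one of the existing pull-through relations: \eqref{eqn-ijijDot} for a dot, \eqref{eqn-pullFarThruTrivalent} for a trivalent vertex, \eqref{eqn-R3move} for a $4$-valent vertex, and \eqref{eqn-pullFarThru6Valent} for a $6$-valent vertex.

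For the base case, $\Gamma$ has no vertices and is a disjoint union of strands, possibly closed into loops, all colored distantly from $j$. The $j$-strand meets this configuration in finitely many crossings, and a secondary induction on the number of crossings, using \eqref{eqn-R2move} repeatedly to cancel adjacent over/under pairs, moves the $j$-strand to the far side of every component of $\Gamma$ (a closed loop is handled by isotoping it to be disjoint from $j$, the sequence of R2's then realizing that isotopy diagrammatically).

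For the inductive step, place $\Gamma$ in Morse-theoretic general position with respect to the horizontal direction, so that every horizontal slice of the strip meets at most one vertex of $\Gamma$. Choose the vertex $v$ of $\Gamma$ that sits in the slice nearest the $j$-strand. Using isotopy (justified by the cyclicity relations and by R2 for ambient crossings) I can slide the $j$-strand into a neighborhood of $v$ so that its local configuration exactly matches the left-hand side of the pull-through relation appropriate to the type of $v$. Applying that relation pulls the $j$-strand past $v$. The remaining configuration has the $j$-strand interacting only with $\Gamma \setminus \{v\}$, which has one fewer vertex, so the inductive hypothesis completes the proof.

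The main obstacle is bookkeeping: turning the informal ``slide the $j$-strand into a neighborhood of $v$'' step into an actual sequence of elementary moves. Because $j$ is distant from every color of $\Gamma$, the only moves required for this repositioning are R2 moves and ambient isotopies of strands and vertices (whose well-definedness is already guaranteed by the cyclicity of our generators); there is no need to invoke any relation involving a trivalent or $6$-valent vertex with a $j$-edge, since none such can occur. The strict nature of the lemma follows because each of the listed relations is a strict equality, not merely a reduction modulo simpler graphs.
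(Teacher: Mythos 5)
Your induction on vertices is a reasonable way to organize the combinatorial part of the argument, and each of the local pull-through situations you enumerate is indeed handled by one of the cited relations. But you have omitted the one genuinely nontrivial point that the paper's proof addresses, and your last sentence papers over it: the relations (\ref{eqn-R2move})--(\ref{eqn-R3move}) are equalities between \emph{bare} local pictures with no boxes in their interior regions, whereas a morphism in $\mc{DC}_1$ is a graph together with polynomials placed in regions. When the $j$-strand approaches a vertex of $\Gamma$, the interior region(s) of the little local picture may contain a nontrivial polynomial, and one cannot just quote the relation; one must first relocate the polynomial, and \emph{a priori} polynomial slides produce extra terms that change the graph (relations (\ref{eq-slide5})--(\ref{eq-slide6})). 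Saying ``the strict nature follows because each listed relation is a strict equality'' does not resolve this, since the issue is whether the local configuration is in a form to which the relation applies at all.

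The paper's proof is essentially one observation aimed exactly at this: any polynomial in a region bounded by at least two lines of mutually distant colors can be slid out of the region \emph{without} leaving residue, because $|i-j|\ge 2$ forces $\{i,i+1\}\cap\{j,j+1\}=\emptyset$, so each variable $x_k$ passes freely through at least one of the two bounding lines, and hence any polynomial can be factored and slid out with no extra terms. Once that is noted, the combinatorial induction you describe is, as the paper puts it, not the ``significant part.'' Your proof would be complete if you added this observation at the step where you ``slide the $j$-strand into a neighborhood of $v$'': before invoking the local relation, clear any polynomial from the relevant region using the free-slide observation.
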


\begin{proof} The only significant part of this lemma is that we can still apply the relations mentioned, even
in the presence of arbitrary polynomials in each region. However, it is an immediate observation that any
polynomial inside a region bounded by at least two lines of distant colors may be slid entirely out of the
region, which does not affect the underlying graph. \end{proof}

The above lemma highlights the fact that we must deal with polynomials when they appear. However, the only
significant polynomials are those which appear in internal regions (regions not touching the boundary of the
planar strip) since all our computations will be local, so polynomials in external regions can be moved outside
the calculation. We will mention whenever a polynomial is relevant, and the reader can check that whenever we
do not mention it, there are only external regions.

\begin{prop} One may apply the (strict) $i$-colored double dot removal move to any graph $\Gamma$. \end{prop}

\begin{proof} Suppose the $i$-graph of $\Gamma$ contains two dots connected by an edge. Then in
  $\Gamma$, the ``edge'' connecting them can only meet series of 4-valent vertices with various
  $j$-strands for $j$ distant from $i$. Since dots can be slid across distant-colored strands by
  (\ref{eqn-ijijDot}), we may slide the dots until they are connected directly by an edge, and just
  use (\ref{doubleDot}) to reduce the graph.
\end{proof}

\begin{prop} One may apply the weak $i$-colored dot contraction move to any graph $\Gamma$. \end{prop}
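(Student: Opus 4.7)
The plan is to induct on the number of $6$-valent vertices lying on the $i$-colored edge that connects the dot to the trivalent vertex. Given a graph $\Gamma$ whose $i$-graph in some neighborhood looks like a trivalent vertex with a dot on one of its three edges, I would track that edge inside the full graph $\Gamma$: as an $i$-colored strand in $\Gamma$, it is a path that may cross $4$-valent vertices (meeting distant-colored strands) and $6$-valent vertices (meeting $i\pm 1$-colored strands) before it terminates in the trivalent vertex. The goal is to show that $\Gamma$ reduces to the graph $\Gamma'$ in which the dot has been contracted, plus strictly simpler graphs.

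First, I would use Lemma~\ref{lem-pullThru}, together with (\ref{eqn-ijijDot}), to slide the dot freely past any $4$-valent vertices on its edge: the dot passes under distant-colored lines without changing the graph's complexity and without altering the $i$-graph in the chosen neighborhood. After this step we may assume the only obstructions between the dot and the trivalent vertex are $6$-valent vertices whose other color is $i-1$ or $i+1$. The nearby polynomials are not a concern here because, by the same sliding arguments, they can be moved out of the local neighborhood of interest before starting the reduction.

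The main step is the treatment of a single $6$-valent vertex adjacent to the dot. Using a rotation of relation (\ref{eqn-ipipipDot}) (available via (\ref{eqn-ipipipRot})), the local picture of a dot on an $i$-strand entering a $6$-valent vertex is expressed as a sum of two diagrams, neither of which contains the $6$-valent vertex in question. Both of these diagrams have strictly fewer $6$-valent vertices than $\Gamma$ and therefore lie in the set $S$ of graphs simpler than $\Gamma$ with respect to our partial order. Applying this replacement expresses $\Gamma$ as a combination of graphs in $S$, which is by definition an $A\cup S$ combination. If instead there are no $6$-valent vertices on the edge at all, then after the $4$-valent sliding in the previous paragraph the dot is directly adjacent to the trivalent vertex along a piece of $i$-colored edge, and the one-color dot contraction built into isotopy (see the discussion following the Frobenius relations) identifies $\Gamma$ with the dot-contracted graph $\Gamma'\in A$.

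The main obstacle I anticipate is purely bookkeeping: one must be careful that the relation (\ref{eqn-ipipipDot}) is being applied in the correct rotational position relative to the dot's approach, and that the polynomials initially in the regions of $\Gamma$ do not impede the rewriting. Both concerns are handled by the fact that all relevant relations hold with arbitrary polynomials in regions (by prior sliding), and that the relations (\ref{eqn-ipipipRot})--(\ref{eqn-ipipipDot}) are cyclically symmetric enough that some rotation applies to the local picture at hand. Beyond that, the argument is an induction on the number of $6$-valent vertices on the chosen edge, with the $4$-valent vertices handled at each step by sliding.
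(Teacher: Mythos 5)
Your proof is substantively correct and, once unpacked, coincides with the paper's argument, but the framing rests on a misreading of the $i$-graph construction. You propose to induct on ``the number of $6$-valent vertices lying on the $i$-colored edge that connects the dot to the trivalent vertex,'' treating $6$-valent vertices as interior points of that edge. But by the definition of the $i$-graph, a $6$-valent vertex in $\Gamma$ (with colors $i$ and $i\pm 1$) \emph{becomes a trivalent vertex} of the $i$-graph; it is never an interior point of an $i$-graph edge. Consequently the $i$-graph edge from the dot to the trivalent vertex can only pass through $4$-valent vertices in $\Gamma$, and the ``trivalent vertex'' at its far end is the unique vertex in question, which is either trivalent or $6$-valent as a vertex of $\Gamma$. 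There is nothing to induct on.

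What saves your proof is that your ``inductive step'' and ``base case'' are precisely the two cases one actually needs. After sliding the dot past $4$-valent vertices using (\ref{eqn-ijijDot}), either the target vertex is trivalent in $\Gamma$ and isotopy (dot contraction) gives a graph in $A$, or it is $6$-valent in $\Gamma$ and relation (\ref{eqn-ipipipDot}) expresses $\Gamma$ as a sum of graphs with strictly fewer $6$-valent vertices, hence lying in $S$. That is exactly the paper's two-case argument. So the content is right, but you should delete the induction and the claim that the $i$-graph edge may cross $6$-valent vertices, and replace it with the direct observation that the vertex terminating the edge in the $i$-graph is either trivalent or $6$-valent in $\Gamma$, handled by the two mechanisms you already give.
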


\begin{proof} Suppose the $i$-graph of $\Gamma$ contains a trivalent vertex connected to a dot. In $\Gamma$,
the trivalent vertex is either a trivalent or 6-valent vertex $v$, and the edge connected the dot to the vertex
may have numerous 4-valent vertices. Again, in $\Gamma$ the dot may be slid across distant-colored strands
until it is connected directly to $v$. If $v$ is trivalent then (\ref{unit}) allows strict dot contraction,
while if $v$ is 6-valent, (\ref{eqn-ipipipDot}) sends $\Gamma$ to the sum of two graphs allowed by weak dot
contraction. \end{proof}

\begin{prop} One may apply the (strict) $i$-colored dot extension move to any graph $\Gamma$. \end{prop}

\begin{proof} This is trivial, by (\ref{unit}). \end{proof}

The following lemma is needed to prove the remainder of the basic $i$-colored moves.

\begin{lemma} Let $\Gamma$ be a sequence of lines $\jj$ (i.e. the identity map of $\jj$), and let $i$ be an
extremal color in $\jj$. Then $\Gamma$ is equal in $\mc{DC}_1$ to a sum of idempotents, such that each
idempotent factors through a sequence of lines $\kk$ where $i$ appears no more than once, and
such that the idempotents do not introduce any new colors not present in $\jj$. \end{lemma}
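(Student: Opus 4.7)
The plan is to exploit the commuting of low ($\le i-1$) and high ($\ge i+1$) colors in $\jj$, then reduce each half by a parabolic argument mirroring the structure of the symmetric group.

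First, I would use the 4-valent vertex isomorphisms $B_c\otimes B_{c'}\cong B_{c'}\otimes B_c$ (valid whenever $|c-c'|\ge 2$) to rearrange $\jj$ as a concatenation $\jj_L\jj_R$, with $\jj_L$ a sequence in $\{1,\dots,i-1\}$ and $\jj_R$ a sequence in $\{i+1,\dots,n\}$. Since $i\notin\jj$, every low/high pair differs by at least $2$, so the rearrangement lifts to an honest isomorphism in $\mc{DC}_1$. Conjugating by this isomorphism reduces the decomposition problem to treating $\mathrm{id}_{\jj_L}$ and $\mathrm{id}_{\jj_R}$ separately, each through sequences using only colors already present. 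By symmetry I focus on $\jj_L$ and reducing its $(i-1)$-count.

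Within $\jj_L$ I would induct on the number of $(i-1)$-strands; the base case of at most one is trivial. For the inductive step, I would select two $(i-1)$-strands and use the commutation isomorphism $B_{i-1}\otimes B_j\cong B_j\otimes B_{i-1}$ for $j\le i-3$ to slide them past intervening colors that commute with $(i-1)$. The aim is to produce one of two local configurations. If the two strands become adjacent, the two-line decomposition~(\ref{twoLines}) $B_{i-1}\otimes B_{i-1}\cong B_{i-1}\{1\}\oplus B_{i-1}\{-1\}$ writes the local identity as a sum of two idempotents, each factoring through a sequence with one fewer $(i-1)$. If instead they are separated only by a single $(i-2)$, the 6-valent decomposition~(\ref{eqn-threeLines}) decomposes $\mathrm{id}_{(i-1)(i-2)(i-1)}$ as a sum of idempotents factoring through $(i-2)(i-1)(i-2)$ and through $(i-1)$, again each with one fewer $(i-1)$. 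In either case the inductive hypothesis applied to the resulting sequence finishes the job.

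The main obstacle is the intermediate situation where, after commutation, the two $(i-1)$-strands are separated by several $(i-2)$'s interwoven with lower colors (such as $(i-3)$) that do not commute with $(i-2)$. I would handle this by a nested reduction applied to the intervening block, now treating $(i-2)$ as the hot color, so that $(i-2)(i-2)$ and $(i-2)(i-3)(i-2)$ become targets for the same two-line and 6-valent decompositions. This nested process terminates by induction on a lexicographic complexity measure, e.g.\ the tuple of strand counts listed from the highest color present downward. Equivalently, one can package the argument algebraically: in the parabolic Hecke algebra $\mc{H}(S_i)=\langle b_1,\dots,b_{i-1}\rangle$, every element admits a reduced expression in which $s_{i-1}$ appears at most once, via the coset decomposition $S_i=\bigsqcup_{k=0}^{i-1} S_{i-1}\cdot s_{i-1}s_{i-2}\cdots s_{i-k}$. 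Each algebraic rewriting step lifts to a categorical identity via the explicit isomorphisms above, and full faithfulness of $\mc{F}_1$ (Section~\ref{subsec-functor}) ensures the lifted categorical decomposition is valid.
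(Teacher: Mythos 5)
Your argument is essentially the paper's argument, especially once you offer the ``equivalently'' algebraic packaging in the last paragraph: the paper also shows that (\ref{twoLines}), (\ref{eqn-threeLines}), and the R2 move realize all Hecke/braid rewriting steps categorically (modulo shorter sequences), and then invokes the combinatorial fact that every element of the symmetric group admits a reduced expression with $s_1$ (or $s_n$) appearing at most once. Your preliminary split into low and high blocks $\jj_L\jj_R$ is also what the paper uses, in the form of the observation that $s_k$ for $k<i-1$ commute with $s_j$ for $j\ge i+1$.

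There are two points worth flagging. First, the appeal to ``full faithfulness of $\mc{F}_1$ (Section~\ref{subsec-functor})'' at the end is circular and must be deleted: this lemma feeds into Proposition~\ref{prop-imoves}, which feeds into Corollaries~\ref{cor-oneofeach} and \ref{cor-grrnk}, which are the ingredients for the proof of faithfulness. Fortunately, it is also unnecessary --- the decompositions (\ref{twoLines}), (\ref{eqn-threeLines}), and R2 are equalities proven inside $\mc{DC}_1$ directly from the defining relations, so the chain of rewritings is a chain of identities in $\mc{DC}_1$; you never need to pass through $\mc{SC}_1$. Second, the direct graphical induction you outline first (lexicographic measure on strand-counts, nested recursion with $i-2$ as ``hot color'') is where the real work would be; it is substantially sketchier than the clean route through reduced words, and you would need to be careful that the ``nested reduction'' of the intervening block does not itself push $(i-1)$'s around in a way that destroys the invariant. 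Since your algebraic version avoids these pitfalls and is essentially the paper's proof, I would drop the graphical recursion as the primary argument and keep only the coset-decomposition version, with the $\mc{F}_1$-faithfulness clause removed.
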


\begin{proof} This proof is akin to the combinatorial argument that the relations defining the Hecke
  algebra or the symmetric algebra are enough to take any complicated word in $b_j$ or $s_j$ to
  a reduced form.  We use induction on the length of $\jj$.  If $k(k+1)k$ appears in $\jj$
  for some $k$, then using (\ref{eqn-threeLines}), we may factor through $(k+1)k(k+1)$ instead,
  modulo morphisms that factor through shorter length sequences.  If any color appears twice
  consecutively in $\jj$ then we may apply (\ref{twoLines}) to replace the two adjacent lines with
  an ``H'', which factors through a sequence of lines of shorter length. If $jk$ appears for $j$
  distant from $k$, then applying the R2 move we can factor through $kj$.  None of these procedures
  added any new colors. So if we consider the sequence $\jj$ as a word in the symmetric group
  $s_{j_1}s_{j_2}\ldots s_{j_d}$, then any non-reduced words will reduce to smaller length
  sequences, and any two reduced words for the same element will factor through each other, modulo
  smaller length sequences.

  We may now use the easily observable fact that any element of the symmetric group can be represented by a
  reduced word which uses $s_1$ at most once, or $s_n$ at most once (although not both!). It follows from this
  that any element in a parabolic subgroup of $S_{n+1}$ can be represented by a reduced word which uses an
  extremal index at most once. Thus, by the process above, we can replace $\jj$ by idempotents factoring
  through reduced expressions $\kk$ which use $i$ at most once. \end{proof}

The usefulness of this lemma can be seen in the following proof.

\begin{prop} One may apply the (strict) $i$-colored connecting move to any graph $\Gamma$, so long as
  $i$ is extremal.
\end{prop}

\begin{proof} Assume $i-1$ does not appear, although the $i+1$ case is analogous. Induct on the
  number of colors present in the graph: the base case of one color is clear. Then we are attempting
  to apply the $i$-colored connecting move to some region of the graph which looks like this:

  \igc{.4}{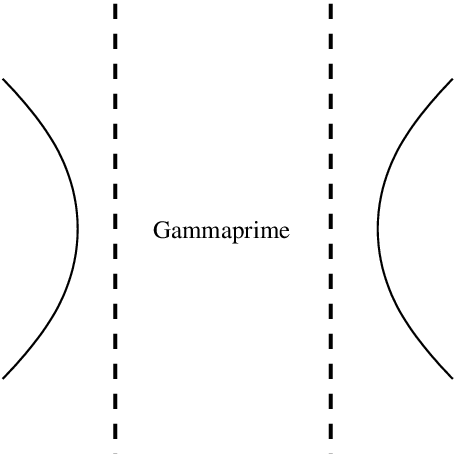}

  $\Gamma'$ is some graph composed entirely of the colors $1, \ldots, i-2$ and $i+1, \ldots ,n$, so that $i+1$
  is extremal or non-existent. In some neighborhood $\Gamma'$ is just a sequence of lines $\1_{\jj}$. By the
  previous lemma, we may replace this sequence of lines with a sum of idempotents factoring through sequences
  $\kk$ where $i+1$ appears at most once, and $i-1$ appears not at all. Localizing to a neighborhood around
  $\1_{\kk}$, we may as well assume that $\jj=\kk$. Now apply the R2 move to bring the $i$-strands past all the
  other colors so that they are separated by either nothing or a single $i+1$ strand. We have not yet altered
  the $i$-graph at all. Then equation (\ref{twoLines}) or equation (\ref{eqn-threeLines}) (respectively) will
  allow the strict $i$-colored connecting move. \end{proof}

When we have an edge in an $i$-graph, a neighborhood of that edge in the full graph will be nothing more than a
sequence of 4-valent vertices as the $i$-strand crosses distant strands. The next corollary allows us to place
restrictions on which distant strands the $i$-strand need cross.

\begin{cor} \label{cor-between} Suppose we are given a graph $\Gamma$ which is a neighborhood of an $i$-colored
strand consisting only of 4-valent vertices crossing the $i$-strand (see the picture below). Then $\Gamma$ is
equivalent in $\mc{DC}_1$ to a linear combination of graphs $\Gamma^\prime$ which have the same $i$-graph, and
for which the sequence of lines crossing the $i$-strand in $\Gamma^\prime$ contains $i+2$ and $i-2$ each at
most once. \end{cor}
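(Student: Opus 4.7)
The approach is to mimic the inductive argument from the preceding lemma, but applied to the sequence $\jj$ of strands that crosses the $i$-strand in the given neighborhood, with a combinatorial target of limiting $i-2$ and $i+2$ rather than $i-1$ and $i+1$.

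First I would observe that every strand crossing the $i$-strand has color distant from $i$, since crossings are realized by $4$-valent vertices. Thus $\jj$ takes values in $L \cup H$, where $L = \{1, \ldots, i-2\}$ and $H = \{i+2, \ldots, n\}$. Moreover, any pair consisting of one color from $L$ and one from $H$ differs by at least $4$, so they are distant from each other and can be swapped via the R2 move \eqref{eqn-R2move}. Next I would apply the inductive argument from the preceding lemma to $\jj$: repeated use of \eqref{eqn-threeLines} (the $k(k+1)k \leftrightarrow (k+1)k(k+1)$ move), \eqref{twoLines} (replacing $jj$ by an H), and R2 distant swaps expresses $1_\jj$ as a sum of idempotents, each factoring through a reduced word $\kk$ for an element of the Coxeter subgroup $\mc{W}'$ generated by $L \cup H$, plus terms factoring through shorter sequences (to which induction applies). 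Since none of these moves introduce colors not already present, every intermediate sequence stays within $L \cup H$.

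The key combinatorial input is that, because $L$- and $H$-generators commute, $\mc{W}'$ is the direct product of the symmetric groups on $\{1, \ldots, i-1\}$ and on $\{i+2, \ldots, n+1\}$. Applying the fact used in the lemma's proof to each factor independently (every element of a symmetric group has a reduced expression using its leftmost generator at most once, and another using its rightmost generator at most once), I can choose the reduced word $\kk$ so that $i-2$ (the rightmost generator of the first factor) and $i+2$ (the leftmost generator of the second factor) each appear at most once. Thus each idempotent can be arranged to factor through a sequence of the desired form.

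Finally, I would insert this decomposition of $1_\jj$ into $\Gamma$ just before the sequence of crossings with the $i$-strand begins, and invoke Lemma \ref{lem-pullThru} to slide the ``return'' half of each idempotent past the $i$-strand, since it uses only colors distant from $i$. The result is a linear combination of graphs $\Gamma'$ in which $\kk$ crosses the $i$-strand, with $i-2$ and $i+2$ each appearing at most once. The $i$-graph is unchanged throughout, since every manipulation involves only colors distant from $i$, and $\Gamma'$ agrees with $\Gamma$ outside the (enlarged) neighborhood. The main subtlety is that the combinatorial claim must produce both bounds simultaneously on a single $\kk$, rather than by iterating the lemma; this is precisely what the direct-product structure of $\mc{W}'$ provides, as iterating the original lemma (first with center $i-1$, then with center $i+1$) could reintroduce $i-2$-strands via an $(i-3)(i-2)(i-3)$-type move.
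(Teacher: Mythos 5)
Your proof is correct and follows the same route as the paper: insert a decomposition of the identity of the crossing sequence $\jj$ and slide the $i$-strand past the unwanted parts. Your observation about the combinatorial step is a genuine improvement in precision: the paper writes ``according to the above lemma,'' but the lemma as literally stated bounds occurrences of $i\pm 1$, not $i\pm 2$, and is thus vacuous here (since $\jj$ already omits $i-1,i,i+1$). What is actually needed, and what you correctly supply, is to re-run the lemma's argument on the Coxeter subgroup generated by $\{s_1,\dots,s_{i-2}\}\cup\{s_{i+2},\dots,s_n\}$, use the direct-product structure, and apply the ``at most one $s_1$, or at most one $s_n$'' fact to each factor with the right orientation to control $i-2$ and $i+2$ simultaneously; you are also right that naively iterating the lemma with centers $i-1$ and then $i+1$ could undo the first bound.
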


  $$\Gamma \ \ = \ \ \ \  \ig{.4}{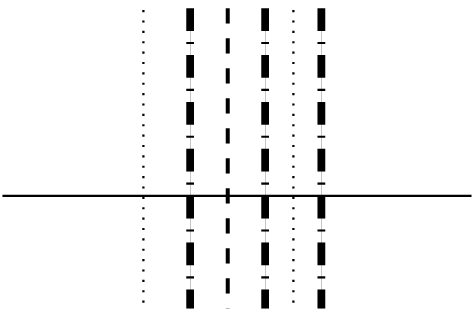}$$

\begin{proof}

 Let $\jj$ be the sequence of lines crossing the $i$-strand in $\Gamma$; clearly $i$, $i-1$, and $i+1$ do not
appear in $\jj$. According to the above lemma, we may replace $\1_{\jj}$ with a sum of idempotents which factor
through ``nice'' sequences $\kk$ where $i+2$ and $i-2$ each appear at most once. Moreover, for any idempotent
appearing in this sum, $i$ is distant from every color appearing in the idempotent (as well as the polynomials
which may appear in the idempotent). So replacing $\1_{\jj}$ with this sum immediately above the $i$-strand, we
may then slide the $i$-strand in each idempotent so that it passes through $\kk$. An example is given below,
ignoring polynomials which can also be slid.

$\ig{.4}{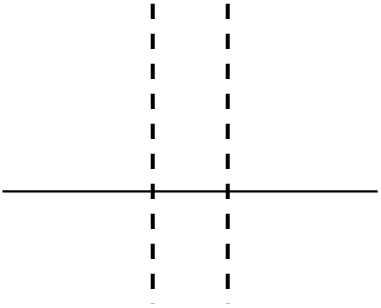} \lra \ig{.4}{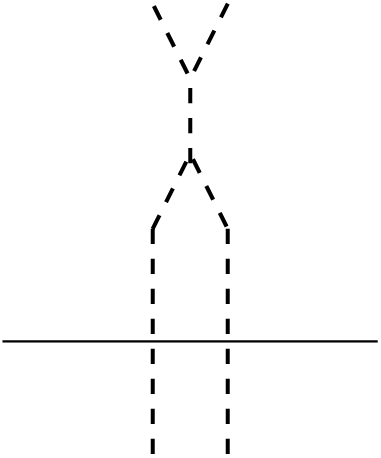} \lra \ig{.4}{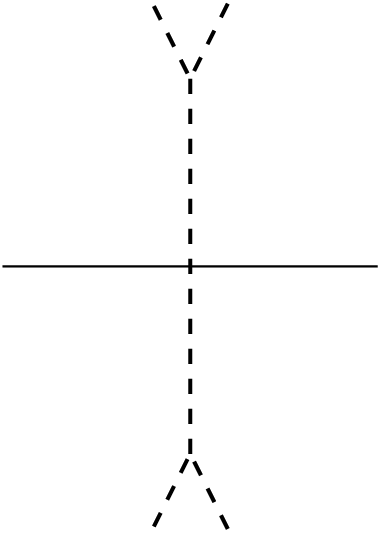}$

\end{proof}

The last two moves will take the rest of the paper to prove.

\begin{prop} One may apply the (strict) $i$-colored needle move to any graph $\Gamma$.\end{prop}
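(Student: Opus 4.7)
Let $v$ denote the trivalent vertex of the needle in the $i$-graph and $L$ the loop, bounding an interior disk $D$. In $\Gamma$ itself, $v$ is either a pure trivalent $i$-vertex (case a) or a 6-valent vertex two of whose three $i$-edges comprise $L$ (case b); the loop $L$ contains only $i$-edges and 4-valent vertices, because a 6-valent vertex sitting on $L$ would contribute an extra $i$-branch and destroy the local needle shape of the $i$-graph. The plan is to empty $D$ of everything that is not forced by $v$, then invoke the one-color needle relations inside.

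The first step is to clear $D$ of all crossings. Any strand of color $j$ with $|i-j|\ge 2$ entering $D$ through a 4-valent vertex on $L$ either exits through another 4-valent vertex on $L$ or forms a closed subdiagram entirely inside $D$. In the first case, the R2 move (\ref{eqn-R2move}) together with the pull-through relations (\ref{eqn-pullFarThruTrivalent}) and (\ref{eqn-pullFarThru6Valent}) allow me to isotope the strand across $v$ and out of $D$. In the second case, the closed subdiagram reduces to polynomials in its regions by Corollary \ref{cor-empty}, and any such polynomial may be slid across $L$ via (\ref{eq-slide5})--(\ref{eq-slide6}). A straightforward induction on the number of 4-valent vertices on $L$, combined with Proposition \ref{prop-onecolor} applied to each non-$i$ color inside $D$ separately, leaves us with $D$ containing only polynomials and, in case (b), the single $(i\pm 1)$-edge $e_2$ emanating from $v$.

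In case (a) the loop is now a bare one-color needle with at most polynomials in its regions. After sliding any interior polynomial across $L$ by (\ref{eq-slide5})--(\ref{eq-slide6}), the eye contains at most a dot, and the one-color needle relations (\ref{needle}) and (\ref{needleWithEye}) directly rewrite the needle as a single dot, which is the finish we require.

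The main obstacle is case (b): the edge $e_2$ is trapped inside $D$ and cannot be pulled across $L$. Here I would chase $e_2$ through the $(i+1)$- (or $(i-1)$-) subgraph still present inside $D$: after applying Proposition \ref{prop-onecolor} to that subgraph and Corollary \ref{cor-empty} to remove any disjoint components, I may assume $e_2$ is a simple edge terminating in a dot. Relation (\ref{eqn-ipipipDot}) and its color-swapped variant now apply to the 6-valent vertex $v$ together with this terminating dot, rewriting the local configuration as a sum of diagrams in each of which the $i$-graph locally has a dot in place of $v$'s trivalent contribution. Combined with the cleanup of $D$ from the preceding step, every term has the shape required by the strict $i$-colored needle move.
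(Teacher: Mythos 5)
Your proposal correctly distinguishes the trivalent and 6-valent cases and aims at the right local relations \eqref{needle}, \eqref{needleWithEye}, and \eqref{eqn-ipipipDot}, but the middle of the argument has gaps that the paper's proof is specifically constructed to avoid.

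The first gap is the claim that any distant strand entering $D$ through a 4-valent vertex on $L$ can ``be isotoped across $v$ and out of $D$'' via \eqref{eqn-R2move}, \eqref{eqn-pullFarThruTrivalent}, \eqref{eqn-pullFarThru6Valent}. Those relations slide a single unbroken line under generators; they do not let you extract a strand of color $j$ that branches at trivalent vertices inside $D$, carries dots, or meets $j\pm 1$ at 6-valent vertices inside $D$. Before any extraction can happen, the interior of $D$ must first be simplified, which is the opposite order from what you propose. Relatedly, your concluding claim that $D$ ``contains only polynomials'' (plus $e_2$ in case (b)) is too strong: the distant-colored strands that cross $L$ give the interior of $D$ a boundary, and that content does not disappear; you only ever reduce it to a simple forest.

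The second gap is the invocation of Proposition~\ref{prop-onecolor} ``applied to each non-$i$ color inside $D$ separately,'' together with Corollary~\ref{cor-empty}. Proposition~\ref{prop-onecolor} is a statement about one-color graphs, and transporting it to the $k$-colored subgraph of a multicolor graph requires exactly the $k$-colored basic moves of Proposition~\ref{prop-imoves} --- which is the proposition being proved. The paper resolves this by running the needle move and the associativity move through a single induction on the number of colors: the interior of $D$ omits the color $i$, hence has strictly fewer colors, and only therefore may the inductive hypothesis be applied to its $i{+}1$- and $i{-}1$-graphs. You need to make this induction explicit, because ``a straightforward induction on the number of 4-valent vertices on $L$'' is not a well-founded substitute.

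Finally, you miss the paper's key simplification. Rather than emptying $D$ of distant colors, the paper reduces only the $i\pm 1$ content of $D$ (to nothing in case (a), to a single dot abutting $v$ in case (b)), after which everything remaining in $D$ is distant from $i$, and then applies Lemma~\ref{lem-pullThru} to pull the $i$-colored loop $L$ itself through the leftover distant material. Since $L$ is a plain curve with only 4-valent crossings, this shrinks the eye of the needle past all the distant structure, leaving a one- or two-colored picture that \eqref{needle}, \eqref{needleWithEye}, or \eqref{eqn-ipipipDot} (followed by a short additional reduction) handles directly. This move of pulling $L$, not the distant strands, is what makes the argument go through cleanly, and its absence is why the middle steps of your proposal do not close.
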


\begin{prop} One may apply the weak $i$-colored associativity move to any $i$-colored ``H'' in any graph
  $\Gamma$, so long as the ``H'' does not look like the following picture:

  \igc{.4}{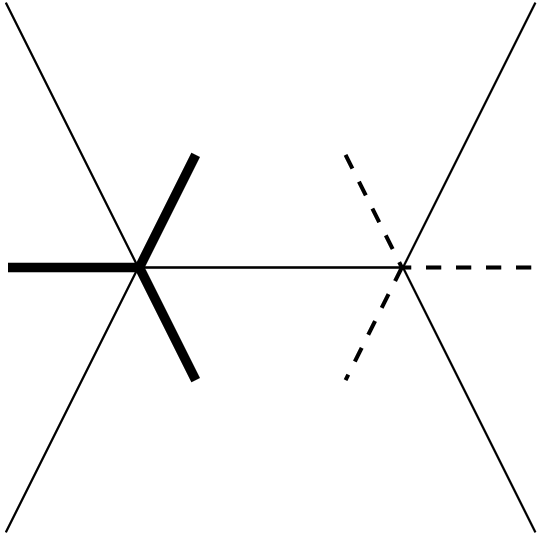}

  where the two vertices are 6-valent in $\Gamma$ with different colors $i+1$ and $i-1$.

  In particular, we may apply the $i$-colored associativity move to any graph $\Gamma$ when $i$ is extremal.
\end{prop}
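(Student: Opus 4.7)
The plan is a three-step reduction: clean the middle bar of the $i$-colored H of crossings involving colors close to $i$, evict the remaining distant-colored crossings through the two endpoint vertices, and then apply the appropriate associativity relation.

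First I would apply Corollary \ref{cor-between} to a neighborhood of the middle bar of the H. This expresses $\Gamma$ as a linear combination of graphs whose crossing sequence on this bar consists of colors distant from $i$, with $i-2$ and $i+2$ each appearing at most once. Next I would evict each remaining crossing by sliding it out of the H region through one of the two endpoint vertices $u$, $v$ of the H. For a crossing strand of color $k$ distant from both $i$ and $i \pm 1$, relations (\ref{eqn-pullFarThruTrivalent}) and (\ref{eqn-pullFarThru6Valent}) permit pulling it through either endpoint, regardless of whether that endpoint is trivalent or 6-valent in $\Gamma$. The only real obstructions are $k = i+2$ at an $(i,i+1)$-colored 6-valent endpoint and $k = i-2$ at an $(i,i-1)$-colored 6-valent endpoint. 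Under the non-forbidden hypothesis on the H, for each such problematic strand at least one of $u$, $v$ is not 6-valent with the blocking color, so the strand can be routed out through that admissible endpoint.

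With the middle bar now cleared, I would apply the associativity relation matching the types of $u$ and $v$: relation (\ref{assoc}) when both are trivalent in $\Gamma$, or one of the double overlap associativity variants (\ref{eqn-ipipipAss})/(\ref{eqn-ipipipAssWDot}) (in the correct rotation) when one or both endpoints are 6-valent with the adjacent colors at all 6-valent endpoints drawn from a single element of $\{i-1, i+1\}$. This interchanges the two orientations of the $i$-colored H, possibly modulo simpler graphs, which are exactly the auxiliary terms allowed by the (non-strict) $i$-colored associativity move. Finally I would reverse the strand movements from the previous step, which by Lemma \ref{lem-pullThru} can be done without altering the $i$-graph.

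The main obstacle I expect lies in this final step: confirming that the double overlap associativity relations can be rotated and combined to realize the $i$-colored associativity flip in every admissible endpoint configuration, including the mixed cases where $u$ is trivalent while $v$ is 6-valent. This is also precisely where the forbidden configuration fails; with endpoints of two different flavors $(i,i+1)$ and $(i,i-1)$, no double overlap relation applies, and the natural substitute (\ref{eqn-threeColorAss}) modifies the graphs in colors other than $i$, so the resulting terms would lie outside the permitted target set $A \cup S$ of the $i$-colored associativity move.
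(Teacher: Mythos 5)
Your opening step (applying Corollary \ref{cor-between} to clear the middle bar of the $i$-colored ``H'') matches the paper. The gap appears in the next step, where you claim that the surviving $i+2$ or $i-2$ crossing can always be routed out through whichever endpoint of the ``H'' is ``admissible'' for it, because ``under the non-forbidden hypothesis on the H, for each such problematic strand at least one of $u$, $v$ is not 6-valent with the blocking color.'' This is false. The forbidden configuration only excludes the case where the two endpoints are 6-valent with \emph{different} adjacent colors ($i+1$ at one, $i-1$ at the other). It does \emph{not} exclude the case where both $u$ and $v$ are 6-valent with the \emph{same} adjacent color, say both $(i,i+1)$. In that allowed case, an $i+2$ strand crossing the middle bar is blocked at both endpoints (since $i+2$ is adjacent to $i+1$), and cannot be evicted at all. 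You also cannot pull $i+2$ laterally past the two $i+1$-strands emanating from the 6-valent vertices, for the same adjacency reason.

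This is exactly Case IV of the paper's proof, and it requires genuinely new machinery: one applies relation (\ref{eqn-threeLines}) to the $(i+1),(i+2),(i+1)$ sequence sandwiching the stuck $i+2$-strand, decomposing the diagram into a piece where triple overlap associativity (\ref{eqn-threeColorAss}) can be applied to a subgraph, plus remainder terms with a dot on $i+2$ that reduce further (through Case III) after sliding that dot under the $i$-strand. Your proposal misreads (\ref{eqn-threeColorAss}) as relevant only to the forbidden configuration, when in fact it is indispensable for a configuration you have declared admissible. Your treatment of the both-$(i,i+1)$-6-valent case with no $i+2$ strand (the paper's Case III) is also understated: there, the paper must first break the adjacent $i+1$-identity lines using a polynomial-free rewriting of (\ref{twoLines}) before a subgraph matches the shape of (\ref{eqn-ipipipAss}), producing extra simpler terms; ``apply the appropriate associativity relation'' hides this, and it is part of why the result is only the non-strict move. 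Finally, the whole proof is organized as an induction on the number of colors, jointly with the $i$-colored needle move proposition, because Corollary \ref{cor-between} and the underlying Lemma themselves rest on having all basic $k$-colored moves available in fewer colors; that inductive framing is absent from your write-up.
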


\begin{proof}[Proof Setup] We apply induction on the set of colors present in the graph to prove these two
propositions simultaneously. Both propositions hold when $i$ is the only color in the graph, by
(\ref{eqn-associativity}), (\ref{needle}), (\ref{holepoly}), where the latter is needed because an arbitrary
polynomial may be within the eye of the needle. The inductive hypothesis then implies, with the previous
propositions, that for any graph with fewer colors, we may apply \emph{all} the weak $k$-colored basic moves
for an extremal color $k$. In this case Proposition \ref{prop-weakonecolor} says that we may reduce the
$k$-graph to a disjoint union of simple trees. Within each region of $\Gamma$ delimited by the $i$-graph the
color $i$ is absent, so there are fewer total colors and we may reduce both the $i-1$-graph and the $i+1$-graph
inside this region to a union of simple trees. This application of the inductive hypothesis was the only reason
to consider the two propositions together; we now treat them separately. \end{proof}

\begin{proof}[Needle move, modulo induction hypothesis] Suppose the $i$-colored graph contains a needle. The
trivalent vertex of the needle is either trivalent or 6-valent in $\Gamma$, and the edge of the needle may
contain a series of 4-valent vertices. So a neighborhood of the needle in $\Gamma$ looks like one of the
pictures below.

\begin{center} $\ig{.5}{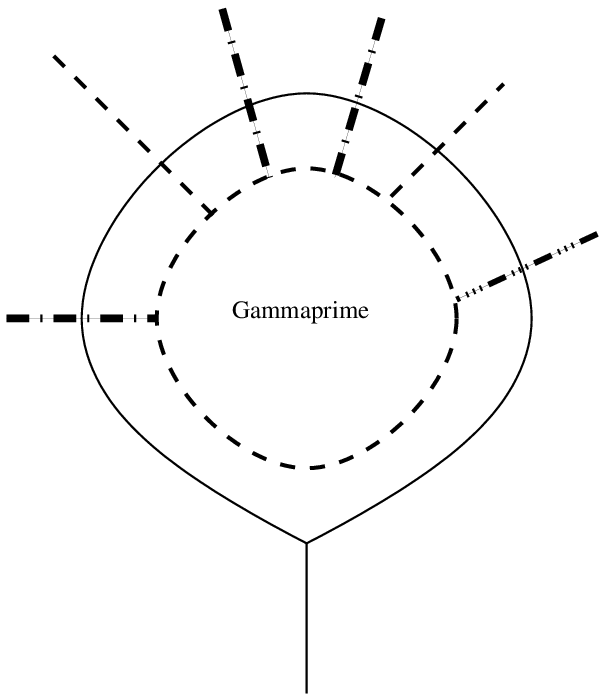}$ OR $\ig{.5}{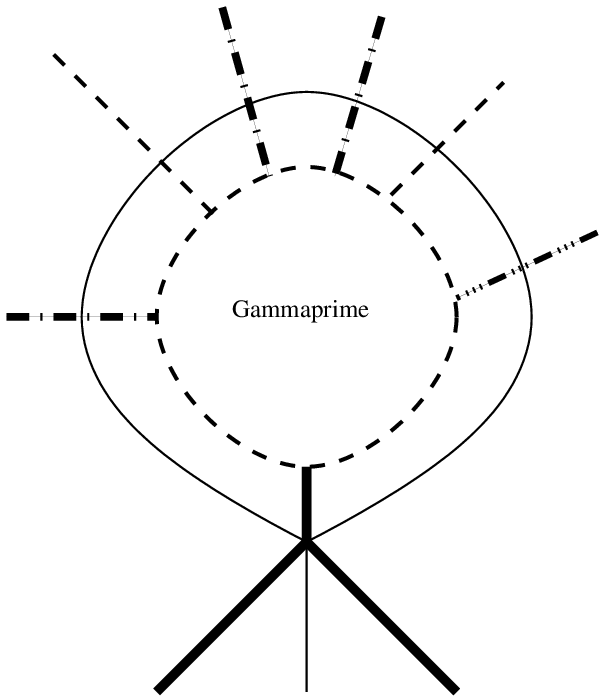}$\end{center}

The lines entering the needle around the top are colored with various $j$ all distant from $i$. We assume
without loss of generality that, in the second case, the 6-valent vertex has second color $i+1$.

We may now reduce the $i+1$ and $i-1$ graph of $\Gamma'$, since $\Gamma'$ does not contain the color $i$. In
the first case, the $i \pm 1$ graph has no boundary, so both reduce to the empty graph. In the second case,
$i+1$ has a single boundary line so it reduces to a boundary dot, and $i-1$ has none so it reduces to the empty
graph. Within the reduced $\Gamma'$, the possible $i+1$ dot may be slid under other lines until no 4-valent
vertices separate it from the 6-valent vertex (as in the proof of weak dot contraction). Thus we may assume
that, after reduction, our neighborhood looks like

\begin{center}$\ig{.5}{proof3.eps}$ OR $\ig{.5}{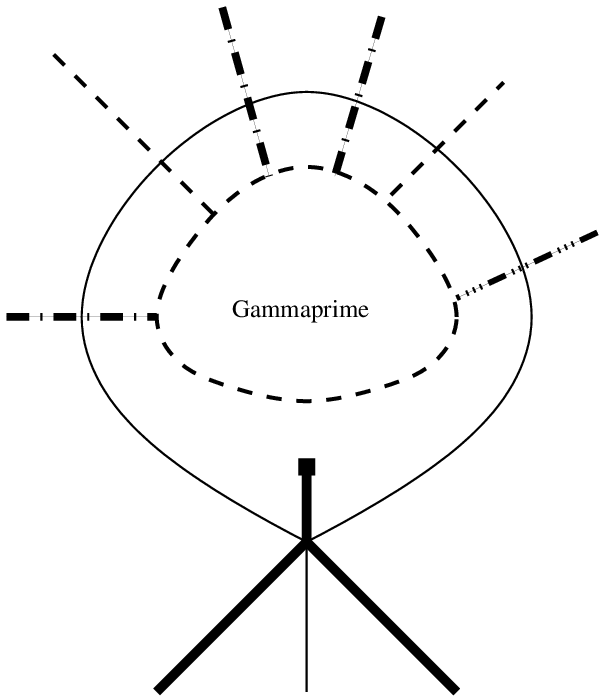}$\end{center}
where now $\Gamma'$ does not contain any of the colors $i-1,i,i+1$. Note that $\Gamma'$ may have
arbitrary polynomials in its various regions. But then by Lemma \ref{lem-pullThru} we can pull the line forming
the needle through all of $\Gamma'$, thus completely ignoring $\Gamma'$ from the picture! There still may be a
polynomial in the eye of the needle. We have effectively reduced to the 2-color case on the right, or the one
color case on the left. We know the one color case works. To check the 2-color case, we use
(\ref{eqn-ipipipDot}) followed by other reduction moves.

\begin{center} $ \ig{.3}{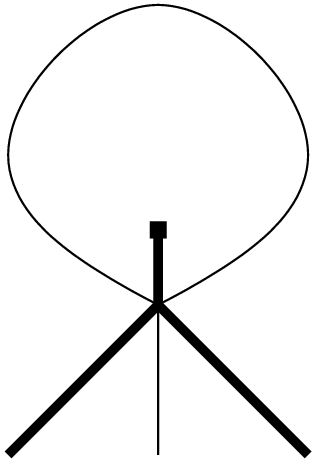}\ \ \to \ \ \ig{.3}{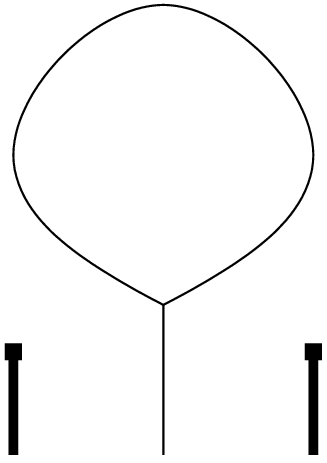} \ {\rm or} \ \ig{.3}{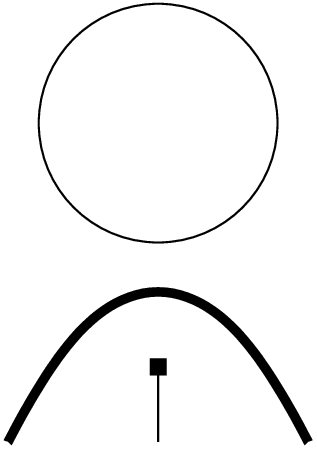} \ \ \to \
\ \ig{.3}{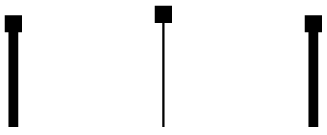} \ {\rm or} \ \ig{.3}{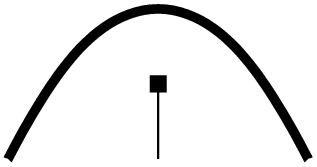}$ \end{center}

In these final graphs, the $i$-graph is a dot, as desired. So we may apply the strict $i$-colored needle move.
\end{proof}

\begin{proof}[Associativity move, modulo induction hypothesis] We would like to apply associativity to the
following subgraph of the $i$-graph.

\igc{.5}{Xdiagram.eps}

Each trivalent vertex of the $i$-graph may be either trivalent or 6-valent in $\Gamma$. We are forbidding the
case when one is 6-valent with $i+1$ and the other with $i-1$, so without loss of generality with assume that a
6-valent vertex has $i+1$ as the other color. The neighborhood of this subgraph in $\Gamma$ looks like one of
the following cases.

\begin{center} $\ig{.4}{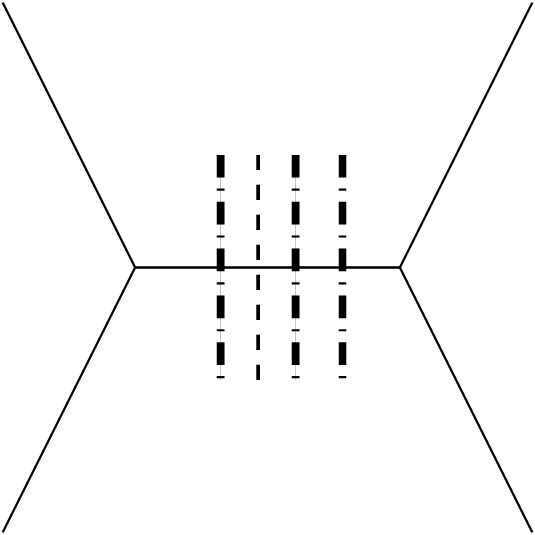}$ OR $\ig{.4}{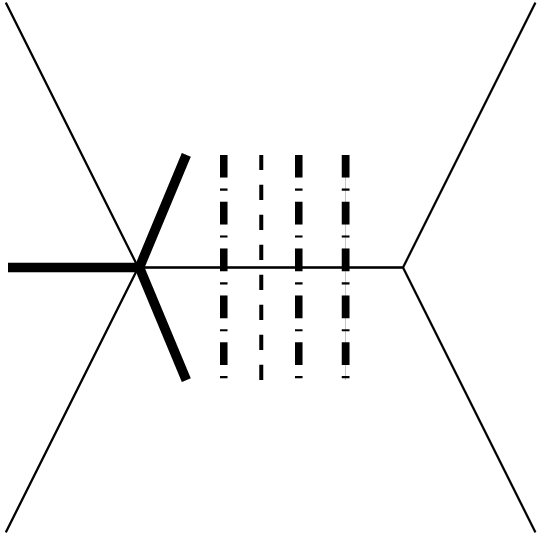}$ OR $\ig{.4}{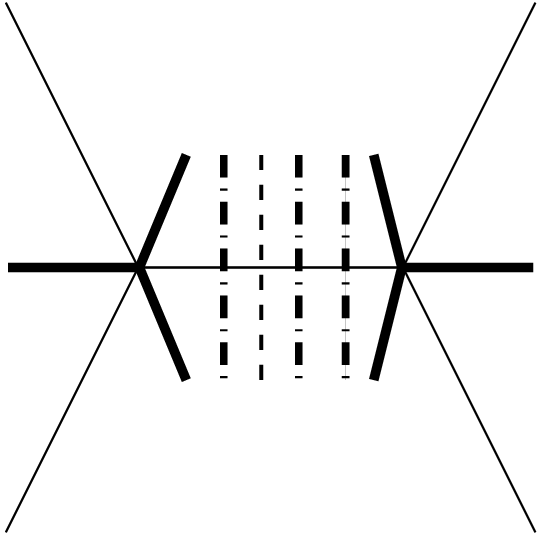}$ \end{center}

All polynomials may be assumed to lie outside the neighborhood. We may use Corollary \ref{cor-between} to
assume that the sequence of lines passing through the middle strand contains at most one instance of $i+2$ and
$i-2$. In the first two cases, all such lines may be slid one by one to the right over the trivalent vertex,
removing them from the neighborhood. In the third case, all lines not labelled $i+2$ can be slid to the right
or to the left, and since there is at most one line labelled $i+2$, then at most one line remains. (If there
had been multiple lines labelled $i+2$, then additional lines may have been stuck between them, but thankfully
Corollary \ref{cor-between} eliminates this possibility.) Four cases remain:

\begin{center} $\ig{.5}{Xdiagram.eps} \quad \ig{.3}{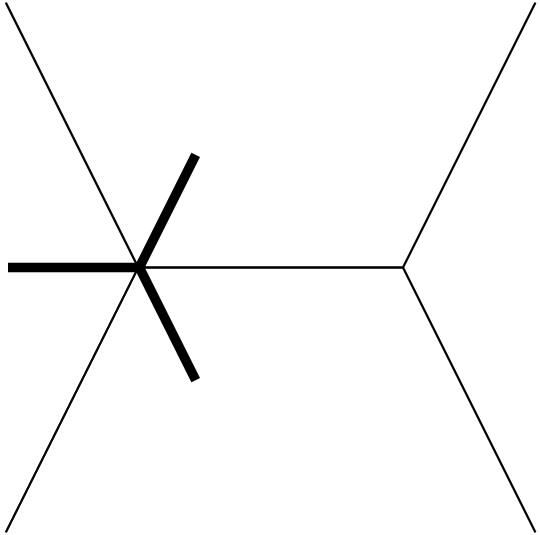} \quad \ig{.3}{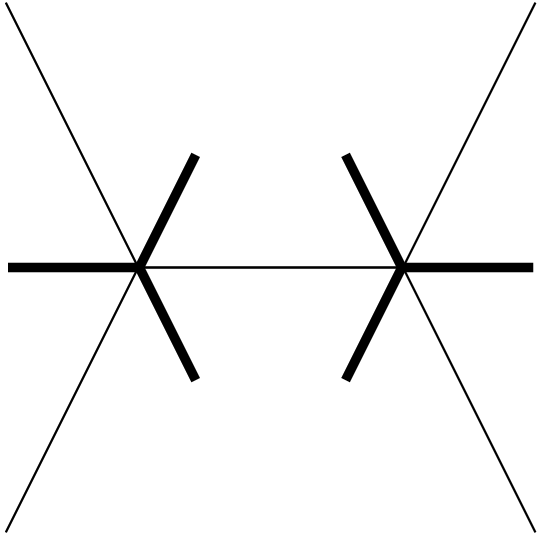} \quad
\ig{.3}{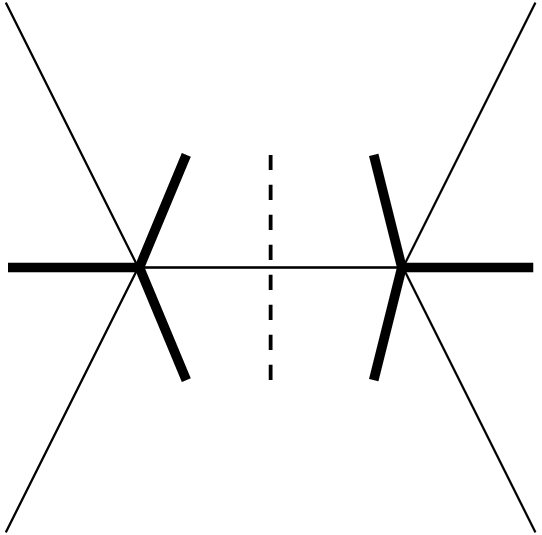}$ \end{center}

In the fourth case, the additional line is $i+2$. So far, the $i$-graph has been unchanged.

Case I: One color associativity allows the strict $i$-colored associativity move.

Case II: Double overlap associativity (\ref{eqn-ipipipAssWDot}) allows the strict $i$-colored associativity
move.

The remaining two cases use the same trick: they replace the interior lines on the top and bottom of the graph
with the corresponding sum of idempotents, and then resolve each one with double or triple overlap
associativity. The remaining two cases will not allow the strict associativity move, only the weak one.

Case III: We rewrite equation (\ref{twoLines}), using (\ref{dotSpaceDot}), so that there is no polynomial on
the bottom.

\begin{equation*} \ig{.4}{twoLines.eps} = \ig{.4}{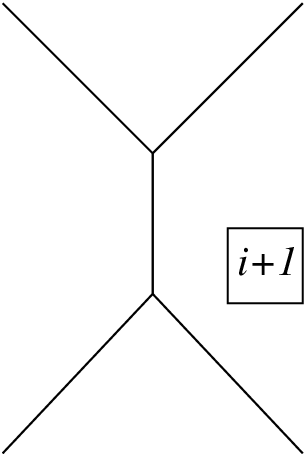} + \ig{.4}{twoLinesAlt2.eps} -
\ig{.4}{twoLinesFinal2.eps} \end{equation*}

Applying this to the thick edges on top of case III, and symmetrically to the bottom, we get a sum of graphs of
two kinds.

For terms which do not involve a dot, we get a graph which looks like the following, with
\emph{no} polynomial in any interior region:

\igc{.4}{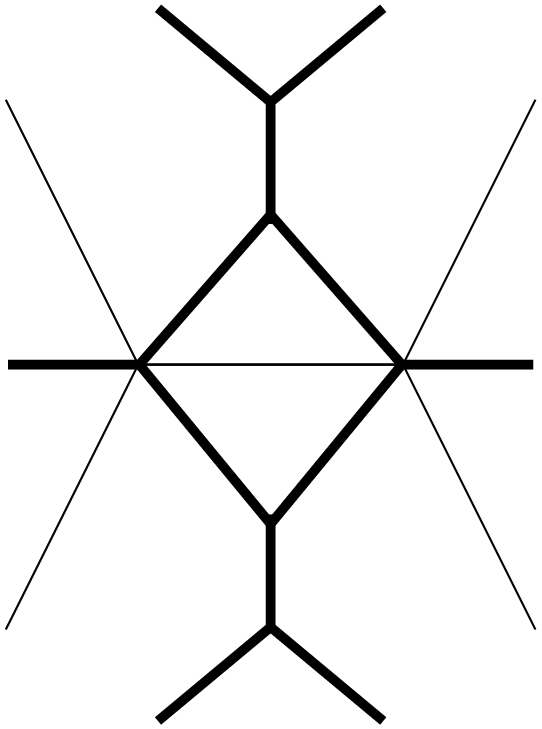}

We may apply double overlap associativity (\ref{eqn-ipipipAss}) to a subgraph of this diagram (ignoring the top
and bottom thick trivalent vertices), which will apply associativity to the $i$-graph.

Consider a term where a dot connects to a 6-valent vertex. Resolving the dot using (\ref{eqn-ipipipDot}), we
get a sum of two further terms: one which looks like Case II, and the other which is one of the alternate
graphs allowed by the weak associativity move.

Thus we may apply weak $i$-colored associativity to each term.

Case IV: Applying equation (\ref{eqn-threeLines}) to the $i+1$,$i+2$,$i+1$ sequence on top of the graph, we get
the sum

\begin{center} $\ig{.4}{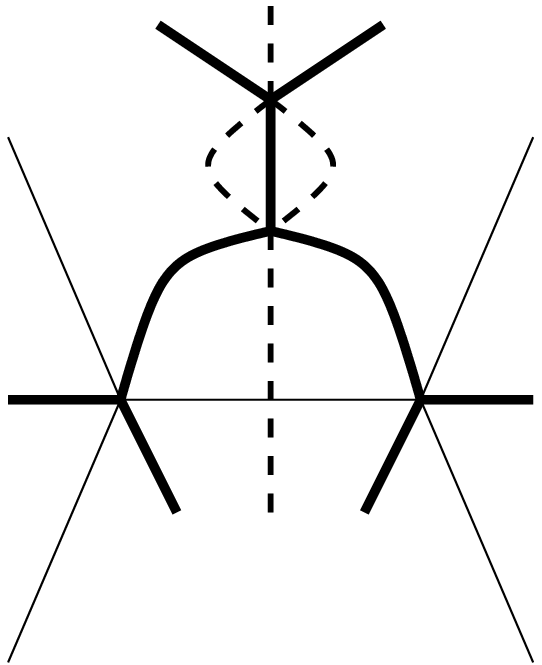} \ \  - \ \  \ig{.4}{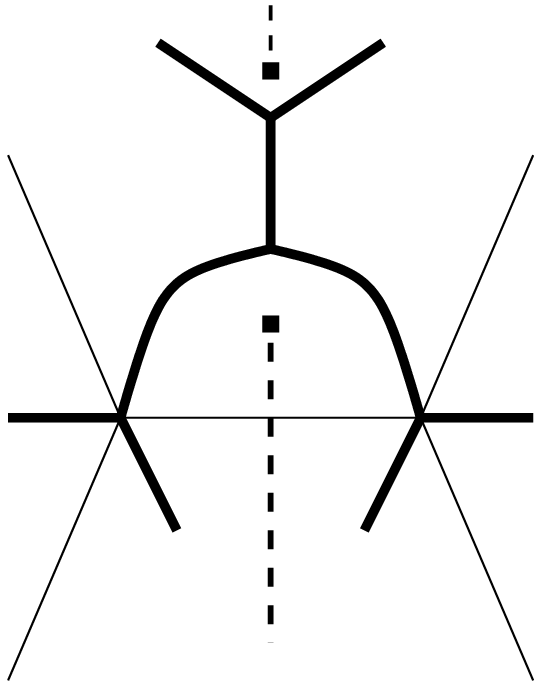}$ \end{center}.

Now we need to show that we can apply associativity to each of these.

For the second picture, we may drag the dot on $i+2$ through the $i$-strand, and then a smaller neighborhood of
the X looks like Case III.

For the first picture, we once again apply (\ref{eqn-threeLines}) to the bottom of the graph to get the sum

\begin{center} $\ig{.4}{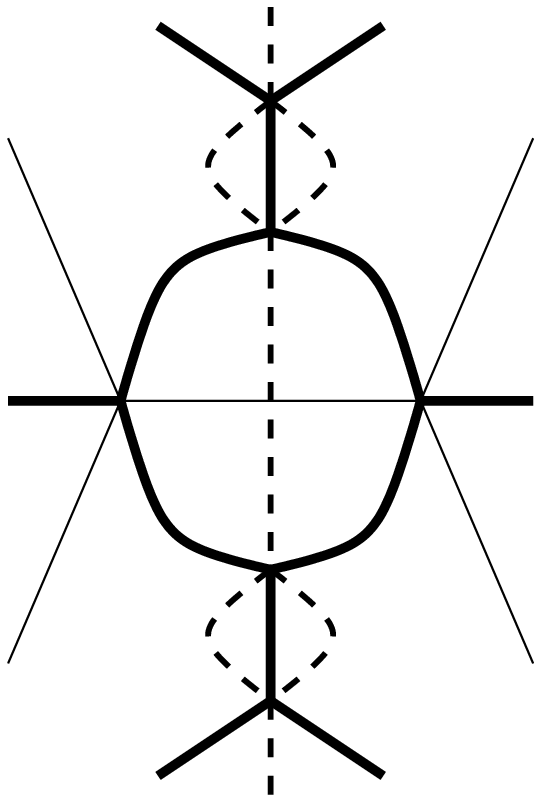} \ \  - \ \  \ig{.4}{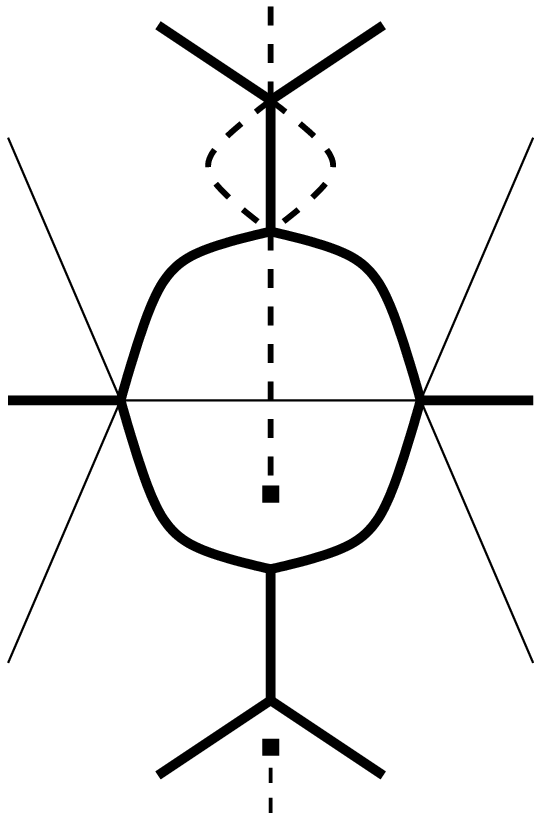}$ \end{center}.

Again, for the second picture we may drag the dot through and reduce using Case III.

For the first picture, there are \emph{no} polynomials in any internal region, because these internal regions
were just created by relations. We may now apply triple overlap associativity (\ref{eqn-threeColorAss}) to the
subgraph of the this picture which ignores the top and bottom 6-valent vertices. This has the effect of
applying weak $i$-colored associativity. \end{proof}

\begin{remark} It does not seem possible to apply $i$-colored associativity to the following graph.
	
\igc{.4}{noAss.eps}

Consider switching the $i$-graph to the other version of an ``H", and look at the triangular region delimited
by the $i$-graph on top. It has at most one 6-valent vertex on its perimeter, so that its boundary lines
consist of an $i-1$ line, an $i+1$ line, and at most one other $i \pm 1$ line. This means that one of the two
colors $i-1$ or $i+1$ must reduce to a boundary dot. It is not possible to obtain a degree 0 morphism if this
happens.

Because of the failure of $i$-colored associativity in this context, our algorithm towards reducing graphs has
always been to treat extremal colors and use induction. Despite this pessimism, Proposition
\ref{prop-colorelimination} (color elimination) implies that we can still do a lot. However, any purely
graphical proof of color elimination is complicated by the failure of $i$-colored associativity for
non-extremal colors. \end{remark}

\vspace{0.1in}
 
\noindent
{\sl \small Ben Elias, Department of Mathematics, Columbia University, New York, NY 10027}

\noindent 
{\tt \small email: belias@math.columbia.edu}

\vspace{0.1in} 

\noindent 
{ \sl \small Mikhail Khovanov, Department of Mathematics, Columbia University, New York, NY 10027}

\noindent
  {\tt \small email: khovanov@math.columbia.edu}

\end{document}